\documentclass{amsart}
\usepackage{amsmath,amssymb,amsthm}
\usepackage[colorlinks=false, pdfborder={0 0 0}]{hyperref}

\usepackage{amsfonts,amsmath, amsthm, amscd, amssymb, graphicx, color, mathrsfs, stmaryrd}
\usepackage[utf8]{inputenc}
\usepackage{tikz-cd}
\usepackage{adjustbox}
\usepackage{mathtools}
\usepackage{enumerate}
\usepackage{marginnote}

\usepackage[normalem]{ulem}

\usepackage{bbm}

\newcommand{\Complex}{\mathbb C}
\newcommand{\Natural}{\mathbb N}

\newcommand{\N}{\mathbb N}

\newcommand{\set}[1]{\left\{#1\right\}}

\newcommand{\R}{\mathbb{R}}

\renewcommand{\geq}{\geqslant}
\renewcommand{\leq}{\leqslant}

\newcommand{\norm}[1]{\left\Vert#1\right\Vert}

\DeclareMathOperator{\spann}{span}

\newcommand{\NA}{\operatorname{NA}}

\newcommand{\sign}{\operatorname{sign}}

\newcommand{\supp}{\operatorname{supp}}

\newcommand{\ext}{\operatorname{ext}}

\newtheorem{thm}{Theorem}[section]

\newtheorem{prop}[thm]{Proposition}
\newtheorem{coro}[thm]{Corollary}
\newtheorem{lema}[thm]{Lemma}

\newtheorem{question}[thm]{Question}
\theoremstyle{definition}
\newtheorem{defi}[thm]{Definition}
\newtheorem{ejem}[thm]{Example}
\newtheorem{rema}[thm]{Remark}

\numberwithin{equation}{section}

\def\fnote#1{\footnote}

\def\natu{{\mathbb N}}

\def\comp{{\mathbb C}}

\def\K{{\mathbb K}}

\def\ignora#1{}

\def\n3#1{\left\vert  \! \left\vert \! \left\vert \, #1 \, \right\vert \!
  \right\vert \! \right\vert }

\newcommand{\pten}{\ensuremath{\widehat{\otimes}_\pi}}

\newcommand{\ptensor}{\widehat{\otimes}_{\pi}}

\title{Functions in $L_1(\mu,Y)$ with optimal tensor representations}

\author[L. C. García-Lirola]{Luis C. Garc\'ia-Lirola}
\address[L. C. García-Lirola]{Departamento de Matemáticas, Universidad de Zaragoza, 50009, Zaragoza, Spain} 
\email{\texttt{luiscarlos@unizar.es}}
\urladdr{\url{https://personal.unizar.es/luiscarlos/}}

\author[J. Guerrero-Viu]{Juan Guerrero-Viu}
\address[J. Guerrero-Viu]{Departamento de Matemáticas, Universidad de Zaragoza, 50009, Zaragoza, Spain} 
\email{j.guerrero@unizar.es}

\subjclass[2020]{46B04; 46B20; 46B28}

\keywords{Norm-attainment; Projective tensor product; Bochner integrable functions; Strictly convex space.}

\begin{document}

\begin{abstract}
We study the existence and characterization of optimal tensor representations of elements in the space $L_1(\mu,Y)$ of Bochner integrable functions. We completely describe the set of norm-attaining elements in two settings. First, when the Banach space $Y$ is strictly convex, and second, when $Y=L_1(\nu)$ and $\mathbb K=\mathbb R$. In both situations, our analysis yields the existence of non-norm-attaining tensors whenever the underlying measures are not purely atomic. Finally, we introduce a geometric property over $Y$ ensuring that every element in $L_1(\mu, Y)$ admits an optimal representation. In particular, this holds for Lipschitz-free spaces over complete scattered metric spaces, for $C(K)$ spaces when $K$ is a compact Hausdorff totally disconnected space, and for $c_0(\Gamma)$ where $\Gamma$ is any index set. As a byproduct, we settle two open questions regarding projective norm-attainment. 
\end{abstract}

\maketitle

\section{Introduction}

The connection between the existence of norm-attaining functions and the geometric structure of Banach spaces has been extensively studied and has generated numerous contributions (e.g. for bounded linear operators \cite{Bourgain, Lindenstrauss63, Sch2, Uhl}, for bounded bilinear mappings \cite{AAP, AFW, Choi, saleh1}, and for Lipschitz functions \cite{AMCRZT, CCGMR, Godefroy, KMS}). Similarly, the search of optimal representations has played a significant role in the geometry of Banach spaces, giving rise to a substantial body of literature in different contexts (e.g. integrals and convex series of molecules in Lipschitz-free spaces \cite{APS26, APS, AliagaRZ, Weaver}, and the $\lambda$-property or CSRP \cite{AL, ALS, Lohman, Suarez}).

Bridging the gap between those perspectives, projective tensor products offer a natural setting where optimal representations and norm-attainment meet together. Namely, given $u\in X\pten Y$, it is said that $u$ is norm-attaining if the infimum from the definition of its projective norm turns out to be a minimum. In other words, if there exists an optimal representation $u=\sum_{i=1}^\infty x_i\otimes y_i$ such that $\norm{u}_\pi =\sum_{i=1}^\infty \norm{x_i}\norm{y_i}$. In particular,  concerning spaces of integrable (vector-valued) functions and under the isometric identifications $L_1(\mu)\pten Y=L_1(\mu,Y)$, it is natural to ask the following question: given a function $f\in L_1(\mu,Y)$, whether it can be expressed as $f(\omega)=\sum_{i=1}^\infty f_i(\omega)y_i$, where $(f_i)_{i=1}^\infty \subseteq L_1(\mu)$, $(y_i)_{i=1}^\infty \subseteq Y$, and which furthermore satisfies $\norm{f}=\sum_{i=1}^\infty \norm{f_i}\norm{y_i}$. This is clearly equivalent to saying that $f$ has an optimal representation in the projective norm sense. In this sense, the main goal of this paper is to take advantage of the identification $L_1(\mu)\pten Y=L_1(\mu,Y)$, in order to describe the elements from $L_1(\mu,Y)$ that admit optimal representations.

Returning to tensor products, we recall that the notion of projective norm-attainment was introduced in \cite{DJRRZ}, motivated by the development of nuclear operator theory. However, in the finite-dimensional case, the study of norm-attaining tensors goes back to Pelczy\'nsky and Tomczak-Jaegermann, who study it in \cite{PTJ} under the name of faithful operator. After that, new results have appeared in the literature. Writing $\NA_\pi(X\pten Y)$ for the set of norm-attaining tensors, we collect here the main results in this line:
\begin{enumerate}
    \item $\NA_\pi(X\pten Y)=X\pten Y$ if
    \begin{itemize}
        \item     $X$ and $Y$ are finite dimensional \cite[Proposition 3.5]{DJRRZ};
        \item $X=Y$ is a Hilbert space 
    (based on a diagonalization argument, see \cite[Proposition 3.8]{DJRRZ}); 
    \item $X$ is a finite-dimensional polyhedral space and $Y$ is $1$-complemented in its bidual \cite[Theorem 4.1]{DGLJRZ} and \cite[Corollary 3.2]{GGR};
    \item $X=\ell_1(\Gamma)$ for some set $\Gamma$ and $Y$ is any  space \cite[Proposition 3.6]{DJRRZ};
        \item $X$ and $Y$ are dual spaces, one of them with the approximation property, and $X=W^*$ where $W\subseteq Z$ and $Z^*=\ell_1(\Gamma)$ \cite[Theorem 3.4]{GGR}.
    \end{itemize}
    \item $\NA_\pi(X\pten Y)$ is dense in $X\pten Y$ if $X$ has the metric $\pi$-property and $Y$ either has the metric $\pi$-property or it is a dual space with the RNP  \cite[Theorem 4.8]{DJRRZ} and \cite[Corollary 4.3]{GGR}, if $X$ is a polyhedral Banach space with the metric $\pi$-property and $Y$ is a dual Banach space \cite[Theorem 4.2]{DGLJRZ} or if $X$ and $Y$ are dual spaces with the RNP and either $X^*$ or $Y^*$ has the approximation property \cite[Corollary 4.6]{DGLJRZ}.
    \item There are Banach spaces $X$ and $Y$ such that $\NA_\pi(X\pten Y)$ is not dense in $X\pten Y$ \cite[Theorem 5.1]{DJRRZ} (in particular, in such space there are non-norm-attaining tensors which are expressible as finite sum of basic tensors).
    \item $X\pten Y\setminus \NA_\pi(X\pten Y)$ is dense if $X=c_0(\Gamma)$ and $Y$ is a Hilbert space \cite[Theorem 3.3]{rueda23}. The same happens, for instance, if $X=c_0$ and both $Y$ and $Y^*$ are strictly convex (in the real setting) \cite[Remark 4.8]{ADGVJR}. In particular, it is possible that both $\NA_\pi(X\pten Y)$ and its complement are simultaneously dense in $X\pten Y$. 
\end{enumerate}

Nevertheless, we remark that there is no known description of the set $\NA_\pi(X\pten Y)$ in the afore-mentioned results, except that in the case it coincides with the whole space $X\pten Y$. As a result, the task of characterizing elements that admit an optimal representation becomes even more appealing. From this perspective, our study is intended to provide a framework for finding examples and counterexamples of various phenomena, since it offers a setting where one can work conveniently (see, for instance, Remarks \ref{rema:FNA} and \ref{rema:l1strictlyconvex}). In particular, the former resolves an open question raised in \cite{ADGVJR}. 

Let us now describe the organization of this paper. Section \ref{sect:notationpreliminaries} is devoted to introducing the notation we will follow throughout this work and presenting the main definitions and some auxiliary results regarding measure theory and norm-attainment. Furthermore, we include in Remark \ref{Rema:DisjointSuppNA} an elementary proof of the fact that $\NA_\pi(L_1(\mu)\pten Y)$ is always dense regardless of the measure $\mu$ and the Banach space $Y$ (which has been proven with different techniques in \cite[Corollary 4.3]{GGR}). Next, in Section \ref{sect:strictlyconvex}, we focus on the case where $Y$ is strictly convex. We first identify which representations from $L_1(\mu,Y)$ are optimal (Proposition \ref{prop:L1TensorYStrictlyConvex}), yielding two different characterizations of norm-attaining tensors in Theorem \ref{thm:SuppDisjNormAttaining}, one in terms of disjointly supported functions and the other in terms of having an essentially countable range (in the real setting, we obtain in Theorem \ref{thm:NAtotalvariation} another characterization in terms of the total variation of the associated measures). As a consequence, we obtain in Corollary \ref{cor:uniquerepresentation} a unique optimal representation (up to scalar multiplication) of norm-attaining elements. Moreover, Theorem \ref{thm:L1(mu)PurelyAtomic} and Corollary \ref{coro:everytensorinL1muY} show that in this context, every tensor from $L_1(\mu)\pten Y$ is norm-attaining if and only if $\mu$ is purely atomic. We also prove that whenever $\mu$ is not purely atomic and $Y$ is strictly convex, the set $\NA_\pi (L_1(\mu)\pten Y)$ has empty interior and it is not a $G_\delta$ set (Propositions \ref{prop:NAComplementaryDense} and \ref{prop:NAisnotGdelta}). Finally, we show in Proposition \ref{prop:L1SuppDisjointNormAttaining} that in the non-strictly convex case, there are norm-attaining elements that do not verify any of the conditions from Theorem \ref{thm:SuppDisjNormAttaining}. On the other hand, Section \ref{sect:L1L1} is dedicated to the case $L_1(\mu)\pten L_1(\nu)$. Under $\sigma$-finiteness assumptions, Theorem \ref{thm:L1L1characterization} and Corollary \ref{cor:caracterizacionrealvalued} establish that a real-valued element is norm-attaining if and only if certain of its level sets can be expressed as a countable union of measurable rectangles. Furthermore, in its most general form, Theorem \ref{thm:L1L1} shows that every element is norm-attaining if and only if either $\mu$ or $\nu$ is a purely atomic measure. As a final result, Proposition \ref{prop:L1L1meager} ensures that the set of norm-attaining elements is meager, provided that both measures are finite and atomless. Lastly, in Section \ref{sect:CCG}, which concerns only the real setting, we define a geometric condition on $Y$ (see Definition \ref{def:CCG}), and we prove in our main result (Theorems \ref{thm:CSRP} and \ref{thm:quotientwKa}) that it guarantees that every element in $L_1(\mu,Y)$ is norm-attaining, regardless of the measure $\mu$. In particular, we show that this is the case when:
\begin{itemize}
    \item $Y=\mathcal F(M)$, where $M$ is a complete scattered space (Corollary \ref{cor:F(M)}).
    \item $Y=C(K)$, where $K$ is a compact totally disconnected space (Corollary \ref{cor:C(K)}).
    \item $Y=L_\infty(\nu)$, for any measure $\nu$ (Corollary \ref{cor:Linfty}).
    \item $Y=c_0(\Gamma)$, for any set $\Gamma$ (Corollary \ref{cor:c0}).
\end{itemize}
At the end of the paper, we provide a negative answer to an open question from \cite{DJRRZ} which asks if every tensor is norm-attaining whenever one of the factors has property $\alpha$.

\section{Notation and Preliminaries}\label{sect:notationpreliminaries}

Throughout the paper $X$, $Y$, and $Z$ denote a Banach space over the scalar field $\K$, which can be either $\R$ or $\comp$. We reserve the notation $B_X$ and $S_X$ for the  closed unit ball and the unit sphere of a Banach space $X$, respectively. We write $\mathcal L(X,Y)$ for the space of bounded linear operators from $X$ into $Y$. If $Y=\K$, then $\mathcal L(X,\K)$ is denoted by $X^*$ and is known as the topological dual of $X$. 

\subsection{Measure theory}
We understand a \textit{measure space} $(\Omega, \Sigma,\mu)$ as a triple where $\Sigma$ is a $\sigma$-algebra over a set $\Omega$ and $\mu$ is a (non-negative, countably additive) measure. For every measurable subsets $A,B\in \Sigma$ we consider the equivalence relation  $A\sim B$ if $\mu(A\Delta B)=0$, where $A\Delta B\coloneqq (A\cup B)\setminus (A\cap B)$ is the symmetric difference. Throughout the text, measurable sets will be identified with their equivalence classes. This abuse of notation is harmless, since all statements are considered modulo null sets.

A measurable set $A\in \Sigma$ is an \textit{atom} if $0<\mu(A)<\infty$, and for every measurable set $B\subseteq A$, either $\mu(B)=0$ or $\mu(B)=\mu(A)$. Moreover, given $E\in \Sigma$ with $0<\mu(E)<\infty$, we say that $E$ is \textit{purely atomic} if $E\sim \bigcup \{ A\subseteq E: A \text{ is an atom} \}$ (note that since atoms are considered modulo $\sim$, $\mu$ is countably additive and $\mu(E)<\infty$, $E$ can only contain a countable number of atoms so the above set is measurable). Finally, we say that $\mu$ is \textit{purely atomic} if for every $E\in \Sigma$ with $0<\mu(E)<\infty$, we have that $E$ is purely atomic. From this definition, it is clear that if  $\mu$ is not purely atomic, we can find some measurable set $A\in \Sigma$ with $0<\mu(A)<\infty$ such that $A$ does not contain any atom. We remark this fact, because we will use it repeatedly in later sections. Finally, we say that $\mu$ is \textit{atomless} if there are no atoms in $\Sigma$.

Given a measure space $(\Omega, \Sigma,\mu)$ and a Banach space $Y$, we say that $s:\Omega \rightarrow Y$ is a \textit{simple function} if there exists (finitely many) pairwise disjoint measurable sets $E_1,\ldots,E_n\in \Sigma$ with $\mu(E_i)<\infty$ for all $i\in \{1,\ldots,n\}$ such that $s$ is constant on each $E_i$ and vanishes outside $\bigcup_{i=1}^n E_i$. As usual, we denote by $\chi_E$ the \textit{characteristic function} of a measurable set $E\in \Sigma$. Moreover, a function $f:\Omega\rightarrow Y$ is \textit{strongly measurable} or \textit{Bochner measurable} if it satisfies both of the following conditions:
\begin{itemize}
    \item $f$ is \textit{weakly measurable} (i.e. $y^*\circ f$ is $\Sigma$-Borel measurable for every $y^*\in Y^*$),
    \item $f$ is \textit{essentially separably valued} (i.e. there is $N\in \Sigma$ with $\mu(N)=0$ such that $\{f(\omega):\omega\notin N\}$ is contained in a separable subspace of $Y$). 
\end{itemize}
If $f$ also satisfies that $\int_{\Omega} \norm{f(\omega)} \text{ d}\mu<\infty$, then we say that $f$ is \textit{Bochner integrable}. We denote by $L_1(\mu,Y)$ the Banach space of \textit{Bochner integrable functions} from $\Omega$ to $Y$, modulo equality almost everywhere (i.e. $f=g$, $\mu-$a.e. if and only if $\mu(\{\omega\in \Omega: f(\omega)\neq g(\omega)\})=0$). If $Y=\K$, then $L_1(\mu,\K)$ is simply denoted by $L_1(\mu)$. It is well-known that the simple functions form a dense set in $L_1(\mu,Y)$. Given $f\in L_1(\mu,Y)$ we define its \textit{support} as the (measurable) set $$\supp f \coloneqq \{ \omega\in \Omega : f(\omega)\neq 0\}.$$ Observe that $\supp f$ is defined up to a null set, but as commented above (and just as we do for functions in $L_1(\mu,Y)$), we identify $\supp f$ with its equivalence class via $\sim$. Finally, we note that the definition of a purely atomic measure adopted here is justified by the fact that if $\mu$ is purely atomic, then $L_1(\mu)$ is isometrically isomorphic to $\ell_1(\Gamma)$, where $\Gamma$ indexes the distinct atoms (corresponding equivalence classes) of $\mu$ (see e.g. \cite{Phelps}).

Along the paper, we will need to transfer some conditions on $L_1(\mu)$ to the case $L_1([0,1])$. To this end, we will need the following known result (see e.g. \cite[Proposition 9.1.1]{Bogachev}. We denote by $\lambda$ the \textit{Lebesgue measure} on $\R$.

\begin{lema}{\label{lema:Non-ConstantFunctionFiniteNonAtomicCase}}
     Let $(\Omega, \Sigma, \mu)$ be a measure space and assume that $\mu$ is atomless and finite. Then, there exists a measurable function $g \colon \Omega \rightarrow [0,\mu(\Omega)]$
    such that $\mu\circ g^{-1}$ is the Lebesgue measure on $[0,\mu(\Omega)]$. 
\end{lema}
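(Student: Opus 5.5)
The plan is to build $g$ as a ``distribution function'' of a nested family of measurable sets indexed by the dyadic rationals of $[0,\mu(\Omega)]$. Set $M=\mu(\Omega)$; the case $M=0$ is trivial, so assume $M>0$.

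\textbf{Step 1 (intermediate values).} The key input is Sierpi\'nski's theorem for atomless measures: if $\mu$ is atomless and $A\in\Sigma$ has $\mu(A)<\infty$, then for every $t\in[0,\mu(A)]$ there is a measurable $B\subseteq A$ with $\mu(B)=t$. I would prove it in two stages.
\begin{itemize}
\item \emph{Small subsets exist.} Since $A$ is not an atom, we can split it into two pieces of positive measure and keep the smaller one. Iterating produces subsets of arbitrarily small positive measure.
\item \emph{Greedy exhaustion.} Given $t$, build an increasing sequence $B_n\subseteq A$ with $\mu(B_n)\le t$. At each stage, add a subset of $A\setminus B_n$ whose measure is at least half of the supremum of the admissible additions, i.e.\ of those subsets $C\subseteq A\setminus B_n$ with $\mu(B_n\cup C)\le t$.
\item \emph{The limit works.} Let $B=\bigcup_n B_n$. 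The added pieces are disjoint with summable measures, so the suprema tend to $0$. If $\mu(B)<t$, the first stage gives a small positive-measure subset of $A\setminus B$ that could still be added at every stage, contradicting that the suprema tend to $0$. Hence $\mu(B)=t$.
\end{itemize}
I expect this step to be the main obstacle; everything afterwards is bookkeeping.

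\textbf{Step 2 (nested family).} Using Step 1 repeatedly, I will construct sets $E_q$ for dyadic rationals $q\in D=\{kM2^{-n}\}\cap[0,M]$ such that $E_q\subseteq E_r$ whenever $q<r$, with $\mu(E_q)=q$, $E_0=\emptyset$ and $E_M=\Omega$. The construction goes by induction on $n$: given consecutive level-$n$ points $q<r$, the midpoint set is obtained by applying Step 1 to $E_r\setminus E_q$ with value $(r-q)/2$ and adjoining the result to $E_q$.

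\textbf{Step 3 (the function $g$).} Define
\[
g(\omega)=\inf\{q\in D:\ \omega\in E_q\}.
\]
This is well defined with values in $[0,M]$, since $\omega\in E_M$. It is measurable because, for real $t$,
\[
\{g<t\}=\bigcup_{q\in D,\ q<t}E_q,
\]
a countable union. For $t\in(0,M]$, continuity from below along dyadics $q\uparrow t$ gives
\[
\mu(\{g<t\})=\sup_{q<t} q = t.
\]
So $\mu\circ g^{-1}$ and $\lambda$ agree on all intervals $[0,t)$. Both are finite measures of total mass $M$, so they agree on the $\pi$-system of such intervals and also on $[0,M]$. By the $\pi$--$\lambda$ (uniqueness) theorem they coincide on the Borel sets of $[0,M]$, which is the claim.

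Modulo-null-set issues do not arise: we choose genuine representatives of the sets, and $g$ is defined everywhere.
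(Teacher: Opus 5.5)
Your proof is correct. The paper does not prove this lemma at all; it quotes it as a known fact from Bogachev (Proposition 9.1.1). So your proposal supplies an argument the paper outsources.

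Your route is the classical one, and each step holds:
\begin{itemize}
\item Sierpi\'nski's intermediate-value theorem for atomless finite measures. The greedy exhaustion works because the added pieces $C_n$ are disjoint subsets of $A$, so $s_n\le 2\mu(C_n)\to 0$. On the other hand, any $C\subseteq A\setminus B$ with $0<\mu(C)\le t-\mu(B)$ would be admissible at every stage.
\item A nested dyadic chain $(E_q)_{q\in D}$ with $\mu(E_q)=q$.
\item The function $g$ as the ``distribution function'' of the chain, identified with $\lambda$ through $\mu(\{g<t\})=t$ and $\pi$--$\lambda$ uniqueness.
\end{itemize}
The citation buys brevity. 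Your argument buys a self-contained proof that needs only countable additivity and Dynkin's theorem.

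One point is worth making explicit. The uniqueness theorem gives $\mu(g^{-1}(E))=\lambda(E)$ for Borel sets $E\subseteq[0,M]$. Lemma~\ref{lema:Non-ConstantFunctionGeneralCase} states the identity for ``measurable'' $E$; if Lebesgue measurable sets are meant, argue as follows. Such an $E$ differs from a Borel set by a subset of a Borel $\lambda$-null set $N$, and $g^{-1}(N)$ is $\mu$-null. So the identity holds for the completion of $\mu$, although $g^{-1}(E)$ need not belong to $\Sigma$ itself unless $\mu$ is complete. With the Borel reading there is nothing to add.
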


\begin{lema}{\label{lema:Non-ConstantFunctionGeneralCase}}
    Let $(\Omega, \Sigma, \mu)$ be a measure space and $A\in \Sigma$ such that $0<\mu(A)<\infty$. Assume $A$ does not contain any atom. Then, there exists a measurable function 
    $g \colon  \Omega \rightarrow [0,\mu(A)]$
      such that 
         $$\mu\left(A\cap g^{-1}(E)\right) =\lambda(E), \quad \forall E\subseteq [0,\mu(A)] \text{ measurable}.$$
\end{lema}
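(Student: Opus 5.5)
The natural plan is to reduce this to Lemma \ref{lema:Non-ConstantFunctionFiniteNonAtomicCase} by restricting the measure to $A$. Consider the trace $\sigma$-algebra $\Sigma_A\coloneqq\{B\cap A: B\in\Sigma\}=\{B\in\Sigma: B\subseteq A\}$ and the restricted measure $\mu_A\coloneqq\mu|_{\Sigma_A}$. Then $(A,\Sigma_A,\mu_A)$ is a finite measure space with $\mu_A(A)=\mu(A)\in(0,\infty)$.

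First I check that $\mu_A$ is atomless. If $C\in\Sigma_A$ were an atom of $\mu_A$, then $0<\mu(C)<\infty$. Moreover, every measurable $B\subseteq C$ lies in $\Sigma_A$ and satisfies $\mu(B)\in\{0,\mu(C)\}$. Hence $C$ would be an atom of $\mu$ contained in $A$, which contradicts the hypothesis. The equivalence classes modulo null sets cause no trouble here, since a null modification of a subset of $A$ can be intersected with $A$ without changing its measure.

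Next, apply Lemma \ref{lema:Non-ConstantFunctionFiniteNonAtomicCase} to $(A,\Sigma_A,\mu_A)$. This gives a $\Sigma_A$-measurable $g_0\colon A\to[0,\mu(A)]$ with $\mu_A\circ g_0^{-1}=\lambda$ on $[0,\mu(A)]$. Extend it to $\Omega$ by setting $g=g_0$ on $A$ and $g\equiv 0$ on $\Omega\setminus A$. Then $g$ is $\Sigma$-measurable: for a Borel set $E$, the preimage $g^{-1}(E)$ equals $g_0^{-1}(E)$, possibly together with $\Omega\setminus A$ when $0\in E$, and both pieces lie in $\Sigma$. Finally,
$$\mu\bigl(A\cap g^{-1}(E)\bigr)=\mu_A\bigl(g_0^{-1}(E)\bigr)=\lambda(E),$$
which is the required identity.

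There is little difficulty here. The only point needing care is the verification that the restriction has no atoms, along with the harmless issue of identifying sets modulo null sets. The step that carries real content is the cited result, Lemma \ref{lema:Non-ConstantFunctionFiniteNonAtomicCase}.
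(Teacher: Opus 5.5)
Your proof is correct and takes the same route as the paper: restrict $\mu$ to $A$, check that the restriction is a finite atomless measure, and apply Lemma~\ref{lema:Non-ConstantFunctionFiniteNonAtomicCase}. The only difference is cosmetic: the paper keeps the whole space and uses $\nu(B)=\mu(A\cap B)$ on all of $\Sigma$, which produces a function on $\Omega$ directly and so avoids your trace $\sigma$-algebra and extension-by-zero step.
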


\begin{proof}
    Consider in $(\Omega, \Sigma)$ the measure $\nu\coloneqq \mu|_A$ (i.e.  $\nu(B)=\mu(A\cap B)$ for all $B\in \Sigma$). By hypothesis over $A$ we have that $\nu$ is a finite atomless measure  on $(\Omega,\Sigma)$. Hence, due to Lemma \ref{lema:Non-ConstantFunctionFiniteNonAtomicCase} there exists a measurable function $g\colon \Omega \rightarrow[0,\nu(A)]$ such that $\mu(A\cap g^{-1}(E))=\nu(g^{-1}(E))=\lambda(E)$. 
\end{proof}

Note that the function $g$ above satisfies that $g|_B$ is not constant for any $B\in \Sigma$ with $B\in A$ and $\mu(B)>0$. Indeed, if $g(B)=\{c\}$, then 
\[ \mu(B)=\mu(A\cap B)\leq \mu(A\cap g^{-1}(\{c\}))=\lambda(\{c\})=0.\]

\subsection{Projective tensor product and norm-attainment}

We recommend \cite{ryan} for a detailed account of projective tensor products. The \textit{projective tensor product} of $X$ and $Y$, denoted by $X \pten Y$, is the completion of the algebraic tensor product $X \otimes Y$ endowed with the norm
$$
\|u\|_{\pi} := \inf \left\{ \sum_{i=1}^n \|x_i\| \|y_i\|: u = \sum_{i=1}^n x_i \otimes y_i \right\},$$
where the infimum is taken over all such representations of $u$.  It is well known that $\|x \otimes y\|_{\pi} = \|x\| \|y\|$ for every $x \in X$, $y \in Y$, and that the closed unit ball of $X \pten Y$ is the closed convex hull of the set $\{ x \otimes y: x \in B_X, y \in B_Y \}$. Observe that every $G\in L(X, Y^*)$ acts on $X \pten Y$ via
$$
G \left( \sum_{i=1}^{n} x_i \otimes y_i \right) = \sum_{i=1}^{n} G(x_i)(y_i),$$
for $\sum_{i=1}^{n} x_i \otimes y_i \in X \otimes Y$. This action establishes a linear isometry from $L(X,Y^*)$ onto $(X\pten Y)^*$ (see e.g. \cite[Theorem 2.9]{ryan}).

Furthermore, special attention will be devoted to the case $X=L_1(\mu)$ for some measure space $(\Omega,\Sigma,\mu)$. In this particular case, it is well-known that the following isometric identification $L_1(\mu)\pten Y=L_1(\mu,Y)$ holds for any Banach space $Y$ \cite[Example 2.19]{ryan}. Indeed, the identification is given by
$$\left(\sum_{i=1}^nf_i\otimes y_i\right) (\omega)=\sum_{i=1}^n f_i(\omega)y_i, \quad \omega\in \Omega,\ \mu-\text{a.e.}$$
This yields an alternative way of computing the projective norm, which will play a key role throughout the paper. We will use this identification repeatedly without explicit mention, and we shall write $\norm{\cdot}$ for either $\norm{\cdot}_\pi$ or $\norm{\cdot}_{L_1(\mu,Y)}$ since these norms coincide and no confusion should arise.

Moreover, the above identification when $\mu$ is taken to be the counting measure, that is $\ell_1(\Gamma)\pten X=\ell_1(\Gamma,X)$, establishes the following consequence: given two Banach spaces $X$ and $Y$, then for every $u\in X\pten Y$ and every $\varepsilon>0$, there exist sequences $(x_i)_{i=1}^\infty$ in $X$ and $(y_i)_{i=1}^\infty$ in $Y$ with $u=\sum_{i=1}^\infty x_i\otimes y_i$ (where the above convergence is in the norm topology of $X\pten Y$) and such that $\Vert u\Vert_{\pi}\leq \sum_{i=1}^\infty \Vert x_i\Vert\Vert y_i\Vert\leq \Vert u\Vert_{\pi}+\varepsilon$. Consequently, it follows that
$$\Vert u\Vert_{\pi}=\inf\left\{\sum_{i=1}^\infty \Vert x_i\Vert\Vert y_i\Vert: \sum_{i=1}^\infty \Vert x_i\Vert\Vert y_i\Vert<\infty,  u=\sum_{i=1}^\infty x_i\otimes y_i \right\}$$
where the infimum is taken over all of the possible representations of $u$ as limit of a series in the above form. Thus, it is natural to introduce the following definition \cite[Definition 2.1]{DJRRZ}.

\begin{defi}
    Let $X,Y$ be Banach spaces and let $u\in X\pten Y$. We say that $u$ \textit{attains its projective norm} if there exist sequences $(x_i)_{i=1}^\infty\subseteq X$, and $(y_i)_{i=1}^\infty\subseteq Y$ such that
    \[u=\sum_{i=1}^\infty x_i\otimes y_i,\quad \text{and} \quad \norm{u}_{\pi}=\sum_{i=1}^\infty \norm{x_i}\norm{y_i}.\]
    In fact, the second  equality happens iff $\norm{z}_\pi=\sum_{i=1}^\infty \norm{x_i\otimes y_i}_\pi$. Furthermore, we say that the representation $z=\sum_{i=1}^\infty x_i\otimes y_i$ is \textit{optimal}.
    We denote $\NA_\pi(X\pten Y)$ the \textit{set of norm-attaining tensors} in $X\pten Y$. 
\end{defi}

In \cite[Corollary 4.3]{GGR} it is obtained, in an indirect way, that $\NA_\pi(L_1(\mu)\pten Y)$ is dense in $L_1(\mu)\pten Y$ for any measure $\mu$ and any Banach space $Y$. Let us provide an elementary proof for that fact based on the afore-mentioned isometric identification $L_1(\mu)\pten Y=L_1(\mu, Y)$. 

\begin{rema}{\label{Rema:DisjointSuppNA}}
    Let $(\Omega, \Sigma, \mu)$ be a measure space and $Y$ be a Banach space. Notice that given  $u=\sum_{i=1}^{\infty} f_i\otimes y_i\in L_1(\mu)\ptensor Y=L_1(\mu, Y)$ such that $\mu(\supp{f_i}\cap \supp{f_j})=0$ for all $i\neq j\in \natu$, we have that $u\in \NA_{\pi}(L_1(\mu)\ptensor Y)$, because 
    \begin{align*}
        \norm{u}_{\pi}&=\norm{u}_{L_1(\mu,Y)} = \int_{\Omega} \norm{\sum_{i=1}^{\infty} f_i(\omega) y_i}_Y \text{ d}\mu =\sum_{i=1}^{\infty} \int_{\supp{f_i}} \norm{f_i(\omega) y_i}_Y\text{ d}\mu \\ &= \sum_{i=1}^{\infty} \norm{ y_i}_Y\int_{\supp{f_i}}  |f_i|\text{ d}\mu = \sum_{i=1}^{\infty} \norm{ y_i}_Y\int_{\Omega}  |f_i|\text{ d}\mu = \sum_{i=1}^{\infty} \norm{ y_i}_Y\norm{f_i}_{L_1(\mu)}.
    \end{align*}
    For the sake of clarity, we have explicitly specified the ambient norm here, although we shall omit this in the sequel whenever no ambiguity arises.
\end{rema}

The remark above shows in particular that $\NA_\pi(L_1(\mu)\pten Y)$ contains the simple functions. Since every function in $L_1(\mu, Y)$ can be approximated by simple ones, this provides a direct proof of the following result, given in \cite[Corollary 4.3]{GGR}.
    
		\begin{prop} Let $(\Omega, \Sigma, \mu)$ be a  measure space and $Y$ be a Banach space. Then,
			\[\overline{\NA_\pi(L_1(\mu)\pten Y)}= L_1(\mu)\pten Y\]
		\end{prop}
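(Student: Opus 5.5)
The plan is to combine Remark \ref{Rema:DisjointSuppNA} with the density of simple functions in $L_1(\mu,Y)$. Since the inclusion $\overline{\NA_\pi(L_1(\mu)\pten Y)}\subseteq L_1(\mu)\pten Y$ is trivial, it suffices to show that every simple function lies in $\NA_\pi(L_1(\mu)\pten Y)$.

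\textbf{Simple functions are norm-attaining.} Let $s$ be a simple function. By definition there are pairwise disjoint measurable sets $E_1,\ldots,E_n$ with $\mu(E_i)<\infty$ and vectors $y_1,\ldots,y_n\in Y$ such that
\[
s=\sum_{i=1}^n \chi_{E_i}\, y_i .
\]
Each $\chi_{E_i}$ lies in $L_1(\mu)$ because $\mu(E_i)<\infty$. Under the identification $L_1(\mu)\pten Y=L_1(\mu,Y)$, this means $s$ corresponds to the tensor $\sum_{i=1}^n \chi_{E_i}\otimes y_i$. To fit Remark \ref{Rema:DisjointSuppNA}, extend the sum by setting $f_i=0$ for $i>n$. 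The supports of the $f_i$ are then pairwise disjoint, so the remark gives
\[
\norm{s}=\sum_{i=1}^n \mu(E_i)\norm{y_i},
\]
and this representation is optimal. Hence $s\in\NA_\pi(L_1(\mu)\pten Y)$.

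\textbf{Density.} Simple functions are dense in $L_1(\mu,Y)$, a standard fact recalled in the preliminaries. The norms $\norm{\cdot}_\pi$ and $\norm{\cdot}_{L_1(\mu,Y)}$ coincide under the identification, so the set of simple functions is also dense in $L_1(\mu)\pten Y$. Since this set is contained in $\NA_\pi(L_1(\mu)\pten Y)$, that set is dense as well.

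No step is a genuine obstacle. The only point needing care is that the isometric identification $L_1(\mu)\pten Y=L_1(\mu,Y)$ is compatible with the representation: the $L_1(\mu,Y)$-norm computed in the remark must be the projective norm of the corresponding tensor. That compatibility is exactly what the cited identification provides.
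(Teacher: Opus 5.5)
Your proposal is correct and is exactly the paper's argument: Remark~\ref{Rema:DisjointSuppNA} shows that simple functions, being sums of elementary tensors with pairwise disjoint supports, attain their projective norm, and density of simple functions in $L_1(\mu,Y)=L_1(\mu)\pten Y$ then gives the result.
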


\subsection{Auxiliary results}

Finally, we collect several auxiliary lemmata to be used in the sequel. We state them in an abstract setting for the sake of generality, although they will most often be applied in the projective tensor product $L_1(\mu)\pten Y$.

\begin{lema}{\label{Lema:NormAttainingSumFactors2}} Let $X$ be a Banach space and let $x\in X$. Assume that $x=\sum_{i=1}^\infty x_i$ and $\norm{x}=\sum_{i=1}^\infty\norm{x_i}$. Then for every $(\lambda_i)_{i=1}^\infty\subseteq[0,\infty)$ we have $$\norm{\sum_{i=1}^\infty \lambda_i x_i}=\sum_{i=1}^\infty \lambda_i\norm{x_i}.$$
\end{lema}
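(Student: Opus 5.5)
The plan is to use a single norming functional for $x$ and show that it norms every summand $x_i$ at the same time. Once that holds, it also norms every nonnegative combination of the $x_i$, which gives the required equality.

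First, the convergence issue. If $\sum_{i=1}^\infty \lambda_i\norm{x_i}<\infty$, the series $\sum_{i=1}^\infty \lambda_i x_i$ converges absolutely in $X$. If instead $\sum_{i=1}^\infty \lambda_i\norm{x_i}=\infty$, the statement has to be read as saying that the left-hand side is infinite as well, or else restricted to the case where the right-hand side is finite. I will work under the assumption $\sum_{i=1}^\infty \lambda_i\norm{x_i}<\infty$, which covers, for instance, every bounded sequence $(\lambda_i)_{i=1}^\infty$. The degenerate case $x=0$ is immediate: then $\sum_{i=1}^\infty\norm{x_i}=0$, so every $x_i=0$ and both sides vanish. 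The real and complex cases are handled together below by taking real parts.

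Now assume $x\neq 0$. By Hahn--Banach, choose $x^*\in S_{X^*}$ with $x^*(x)=\norm{x}$. By continuity and linearity,
\[
\sum_{i=1}^\infty \operatorname{Re} x^*(x_i)=\operatorname{Re} x^*(x)=\norm{x}=\sum_{i=1}^\infty \norm{x_i}.
\]
Each term satisfies $\operatorname{Re} x^*(x_i)\leq |x^*(x_i)|\leq \norm{x_i}$. Hence the series of nonnegative differences $\norm{x_i}-\operatorname{Re}x^*(x_i)$ sums to $0$, so every difference is $0$, that is, $\operatorname{Re} x^*(x_i)=\norm{x_i}$ for all $i$. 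Since also $|x^*(x_i)|\leq\norm{x_i}$, the imaginary part must vanish, and therefore $x^*(x_i)=\norm{x_i}$ for every $i$. This is the key step.

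Finally, put $z=\sum_{i=1}^\infty \lambda_i x_i$. The triangle inequality gives $\norm{z}\leq\sum_{i=1}^\infty\lambda_i\norm{x_i}$. In the other direction, since $\norm{x^*}=1$ and each $\lambda_i\geq 0$,
\[
\norm{z}\geq \operatorname{Re} x^*(z)=\sum_{i=1}^\infty\lambda_i\operatorname{Re}x^*(x_i)=\sum_{i=1}^\infty\lambda_i\norm{x_i}.
\]
The two inequalities together give the claim.

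The only real obstacle is the convergence convention when $\sum_{i=1}^\infty\lambda_i\norm{x_i}$ diverges, which is why the statement is read as above. The rest is routine.
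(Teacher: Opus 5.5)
Your proof is correct and is essentially the paper's argument: choose $x^*\in S_{X^*}$ with $\operatorname{Re}x^*(x)=\norm{x}$, deduce $\operatorname{Re}x^*(x_i)=\norm{x_i}$ for every $i$, and then bound $\norm{\sum_i\lambda_i x_i}$ below by $\operatorname{Re}x^*(\sum_i\lambda_i x_i)=\sum_i\lambda_i\norm{x_i}$ and above by the triangle inequality. Your explicit restriction to $\sum_i\lambda_i\norm{x_i}<\infty$ addresses a convergence point the paper leaves implicit, and the divergent case is covered too: the same functional gives $\norm{\sum_{i\leq n}\lambda_i x_i}\geq\sum_{i\leq n}\lambda_i\norm{x_i}\to\infty$, so the partial sums are unbounded and the identity holds in the extended sense.
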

\begin{proof}
    Let $x^* \in S_{X^*}$ such that $\text{Re }x^*(x)=\norm{x}$. 
    Then $\text{Re }x^*(x_i)=\norm{x_i}$ for every $i$ (if not, we would get $\text{Re } x^*(x)<\norm{x}$).
    Hence, we conclude $$\sum_{i=1}^\infty \lambda_i\norm{x_i}\geq \norm{\sum_{i=1}^\infty \lambda_i x_i}\geq \text{Re }x^*\left(\sum_{i=1}^\infty \lambda_i x_i\right)=\sum_{i=1}^\infty \lambda_i \text{Re }x^*(x_i)=\sum_{i=1}^\infty \lambda_i\norm{x_i}.$$
\end{proof}

We obtain the following immediate consequence, which will be a useful fact in later sections.
 
\begin{coro}\label{coro:NormAttainingSumFactors}
Let $X$ be a Banach space and $x \in X$ with $x=\sum_{i=1}^\infty x_i$. Then $\norm{x}=\sum_{i=1}^\infty \norm{x_i}$ if and only if $\norm{\sum_{i\in F} x_i}=\sum_{i\in F}\norm{x_i}$ for each finite subset $F\subseteq \mathbb N$. 
\end{coro}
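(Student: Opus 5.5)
The forward implication is a direct application of Lemma \ref{Lema:NormAttainingSumFactors2}. Assume $\norm{x}=\sum_{i=1}^\infty\norm{x_i}$ and fix a finite set $F\subseteq\mathbb N$. Choose $\lambda_i=1$ for $i\in F$ and $\lambda_i=0$ for $i\notin F$. The lemma then gives
$$\norm{\sum_{i\in F}x_i}=\sum_{i\in F}\norm{x_i},$$
which is exactly the desired conclusion.

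For the converse, assume the equality holds for every finite $F$. The plan is to pass to the limit along the initial segments $F_n=\{1,\dots,n\}$:
$$\norm{\sum_{i=1}^n x_i}=\sum_{i=1}^n\norm{x_i}\quad\text{for every } n.$$
Since $x=\sum_{i=1}^\infty x_i$ converges in norm, the left-hand side tends to $\norm{x}$ by continuity of the norm. The right-hand side is the nondecreasing sequence of partial sums of $\sum_i\norm{x_i}$. It is bounded, because it agrees with a convergent sequence, so the series $\sum_i\norm{x_i}$ converges and its sum equals $\norm{x}$.

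There is no real obstacle here. The only subtlety is in the converse: we must not assume a priori that $\sum_i\norm{x_i}<\infty$. This finiteness comes for free from the identity of the partial sums with the convergent sequence $\norm{\sum_{i=1}^n x_i}$, so the argument is self-contained. In the forward direction, the series $\sum_i\lambda_i x_i$ in Lemma \ref{Lema:NormAttainingSumFactors2} is a finite sum in our application, so no convergence issue arises there either.
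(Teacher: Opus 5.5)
Your proof is correct and follows the argument the paper intends: the paper gives no written proof and calls the statement an immediate consequence of Lemma \ref{Lema:NormAttainingSumFactors2}. The forward direction takes $\lambda_i\in\{0,1\}$ in that lemma, and the converse is the passage to the limit along the initial segments $\{1,\dots,n\}$, exactly as you describe.
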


We will also need the following lemmas of independent interest.

\begin{lema}\label{l:PairwiseNonColinear}
    Let $X,Y$ be Banach spaces and let $u=\sum_{n=1}^\infty x_n \otimes y_n$ be an optimal representation of $u \in \NA_\pi(X \ptensor Y)$. 
    Let $I \subseteq \Natural$ be maximal such that the vectors $\set{y_{i}:i \in I}$ are pairwise non-colinear. 
    Then there are $(x'_i)_{i\in I} \subseteq X$, such that $u=\sum_{i\in I} x'_i \otimes y_{i}$ is an optimal representation.
\end{lema}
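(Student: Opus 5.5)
The plan is to group the terms of the optimal representation according to the colinearity classes of the $y_n$, and then check that grouping costs nothing in norm. We may discard indices with $y_n=0$ or $x_n=0$: such terms contribute $0$ both to $u$ and to $\sum\norm{x_n}\norm{y_n}$. (If some $y_n=0$ lies in $I$, set $x'_n=0$.)

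\textbf{Step 1: partition by colinearity.} By maximality of $I$, every nonzero $y_n$ is colinear with exactly one $y_i$, $i\in I$. Indeed, if $y_n$ were colinear with none, then $I\cup\{n\}$ would still be pairwise non-colinear. It cannot be colinear with two of them, since colinearity of nonzero vectors is transitive and the $y_i$, $i\in I$, are pairwise non-colinear. So $\Natural$ (minus discarded indices) splits as a disjoint union of classes $N_i$, $i\in I$, with $y_n=\alpha_n y_i$ for some scalar $\alpha_n\neq 0$ whenever $n\in N_i$.

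\textbf{Step 2: define $x'_i$.} Set
\[ x'_i \coloneqq \sum_{n\in N_i} \alpha_n x_n. \]
This series converges absolutely in $X$, because
\[ \sum_{n\in N_i}\norm{\alpha_n x_n}=\norm{y_i}^{-1}\sum_{n\in N_i}\norm{x_n}\norm{y_n}<\infty. \]
Then $x'_i\otimes y_i=\sum_{n\in N_i} x_n\otimes y_n$, with the series converging absolutely in $X\pten Y$. Since $\sum_n \norm{x_n\otimes y_n}_\pi<\infty$, the terms of the original series may be rearranged and grouped freely, which gives $u=\sum_{i\in I}x'_i\otimes y_i$.

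\textbf{Step 3: optimality.} The triangle inequality gives
\[ \norm{u}_\pi\le\sum_{i\in I}\norm{x'_i}\norm{y_i}\le\sum_{i\in I}\sum_{n\in N_i}\norm{x_n}\norm{y_n}=\sum_{n}\norm{x_n}\norm{y_n}=\norm{u}_\pi. \]
Hence all inequalities are equalities and the new representation is optimal. The reverse inequality $\norm{u}_\pi\le\sum_{i\in I}\norm{x'_i\otimes y_i}_\pi$ can also be read off from Corollary \ref{coro:NormAttainingSumFactors}, but the triangle inequality already suffices.

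\textbf{Main obstacle.} The only delicate points are bookkeeping: the convergence and rearrangement of infinite series, and the handling of zero vectors. Both are covered by the absolute convergence noted in Step 2.
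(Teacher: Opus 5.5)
Your proof is correct and follows the paper's route: group the indices into colinearity classes, set $x'_i=\sum_{n\in N_i}\alpha_n x_n$, and regroup the absolutely convergent series. The only difference is the optimality check: you use the sandwich $\norm{u}\le\sum_i\norm{x'_i}\norm{y_i}\le\sum_n\norm{x_n}\norm{y_n}=\norm{u}$, whereas the paper invokes Lemma \ref{Lema:NormAttainingSumFactors2} to compute each $\norm{x'_i\otimes y_i}$ exactly, so your version is slightly more elementary (and your handling of zero vectors is more careful than the paper's).
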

\begin{proof}
 For $i\in I$, let $J_i=\set{j\in \Natural:y_j=c_jy_{i}\mbox{ for some } c_j\in \K}$.
 Notice that the sets $J_i$ are pairwise disjoint and $\Natural=\bigcup_{i\in I} J_i$.
 Since the series $u=\sum_{n=1}^\infty x_n\otimes y_n$ is absolutely convergent we have $$u=\sum_{i \in I} \sum_{j \in J_i} x_j \otimes y_j=\sum_{i \in I} \sum_{j\in J_i} c_jx_j\otimes y_i=\sum_{i\in I} x'_i\otimes y_i$$
 where we set $x'_i:=\sum_{j\in J_i} c_jx_j$.
 Let us check that the series defining $x'_i$ converges absolutely. 
 Indeed using Lemma~\ref{Lema:NormAttainingSumFactors2} we have 
 \[\infty> \norm{\sum_{j\in J_i} x_j\otimes y_j}=\sum_{j\in J_i}\norm{x_j \otimes y_j}=\norm{y_i}\sum_{j\in J_i}\norm{c_jx_j}.\]  
 Finally, we also have $$\norm{x'_i\otimes y_i}=\norm{\sum_{j\in J_i} c_jx_j\otimes y_i}=\norm{\sum_{j\in J_i} x_j\otimes y_j}=\sum_{j\in J_i} \norm{x_j\otimes y_j}$$ so $\norm{u}=\sum_{i\in I}\norm{x'_i\otimes y_i}$.
\end{proof}

\begin{lema}\label{lema:sumadeNAesNA}
    Let $X,Y$ be Banach spaces and let $(u_n)_{n=1}^\infty\subseteq \NA_\pi(X\pten Y)$. If $\norm{\sum_{n=1}^\infty u_n}=\sum_{n=1}^\infty \norm{u_n}<\infty$, then $\sum_{n=1}^\infty u_n\in \NA_\pi(X\pten Y)$.
\end{lema}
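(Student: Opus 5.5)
Each $u_n$ is norm-attaining, so I will concatenate optimal representations of the $u_n$ into one representation of $u:=\sum_{n=1}^\infty u_n$ and show that its cost equals $\norm{u}$.

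For each $n$, fix an optimal representation $u_n=\sum_{k=1}^\infty x_{n,k}\otimes y_{n,k}$, so that
\[
\norm{u_n}=\sum_{k=1}^\infty \norm{x_{n,k}}\norm{y_{n,k}}.
\]
Choose a bijection $\N\to\N\times\N$ and enumerate the family $\{x_{n,k}\otimes y_{n,k}\}_{n,k}$ as a single sequence. The double series of norms satisfies
\[
\sum_{n}\sum_{k}\norm{x_{n,k}}\norm{y_{n,k}}=\sum_n\norm{u_n}<\infty .
\]
It has nonnegative terms, so the enumerated series converges absolutely. In the Banach space $X\pten Y$, an absolutely convergent series may be summed in any order and regrouped. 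Summing over $k$ first gives
\[
\sum_{n,k}x_{n,k}\otimes y_{n,k}=\sum_n u_n=u .
\]

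The cost of this representation is $\sum_{n,k}\norm{x_{n,k}}\norm{y_{n,k}}=\sum_n\norm{u_n}$, and by hypothesis this equals $\norm{\sum_n u_n}=\norm{u}$. Hence the concatenated representation is optimal, and $u\in\NA_\pi(X\pten Y)$.

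The only point needing care is justifying the rearrangement and regrouping. This follows from absolute convergence: the partial sums over finite subsets of $\N\times\N$ form a Cauchy net, so every enumeration converges to the same limit, and that limit agrees with the iterated sum. Beyond this, the argument is direct and needs neither Lemma~\ref{Lema:NormAttainingSumFactors2} nor Corollary~\ref{coro:NormAttainingSumFactors}.
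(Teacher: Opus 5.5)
Your proof is correct and follows the paper's argument: concatenate optimal representations of the $u_n$ and observe that the total cost satisfies $\sum_n\sum_k\norm{x_{n,k}}\norm{y_{n,k}}=\sum_n\norm{u_n}=\norm{u}$. Your justification of the re-enumeration and regrouping via absolute summability in $X\pten Y$ makes explicit a step the paper leaves implicit.
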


\begin{proof}
    For each $n\in \N$, since $u_n$ attains its norm, find $(x_{n,i})_{i=1}^\infty \subseteq X$ and $(y_{n,i})_{i=1}^\infty\subseteq Y$ such that $u_n=\sum_{i=1}^\infty x_{n,i}\otimes y_{n,i}$ and $\norm{u_n}=\sum_{i=1}^\infty \norm{x_{n,i}}\norm{y_{n,i}}$. Hence, we obtain $u=\sum_{n=1}^\infty \sum_{i=1}^\infty x_{n,i}\otimes y_{n,i}$ and
    \begin{align*}
        \norm{\sum_{n=1}^\infty u_n}=\sum_{n=1}^\infty \norm{u_n}= \sum_{n=1}^\infty \sum_{i=1}^\infty \norm{x_{n,i}}\norm{y_{n,i}}.
    \end{align*}
\end{proof}

In the particular case where $X=L_1(\mu)$ we record an elementary consequence of the triangle inequality.         
\begin{lema}\label{Lema:NormAttainingSubsets}
    Let $(\Omega, \Sigma, \mu)$ be a  measure space and $Y$ be a Banach space. Let $u_1,u_2\in L_1(\mu)\pten Y$ and consider $u=u_1+u_2$. Then,
    $$\norm{u}=\norm{u_1}+\norm{u_2} \Longleftrightarrow \norm{u(\omega)}=\norm{u_1(\omega)}+\norm{u_2(\omega)}, \quad \omega \in \Omega, \ \mu-\text{a.e.}$$
\end{lema}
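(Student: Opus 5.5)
The plan is to exploit the identification $L_1(\mu)\pten Y=L_1(\mu,Y)$, under which $\norm{v}=\int_\Omega \norm{v(\omega)}\,\text{d}\mu$ for every $v\in L_1(\mu)\pten Y$. Applied to $u=u_1+u_2$, this rewrites the condition on the left-hand side of Lemma \ref{Lema:NormAttainingSubsets} as an equality of integrals:
\[
\int_\Omega \norm{u(\omega)}\,\text{d}\mu=\int_\Omega \bigl(\norm{u_1(\omega)}+\norm{u_2(\omega)}\bigr)\,\text{d}\mu .
\]
Here we use that $u(\omega)=u_1(\omega)+u_2(\omega)$ for $\mu$-a.e. $\omega$, because the identification is linear.

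For the implication ``$\Leftarrow$'', I integrate the pointwise identity $\norm{u(\omega)}=\norm{u_1(\omega)}+\norm{u_2(\omega)}$ over $\Omega$. The integral of the left side is $\norm{u}$, and the integral of the right side is $\norm{u_1}+\norm{u_2}$ by linearity of the integral. Both integrals are finite since $u_1,u_2\in L_1(\mu,Y)$.

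For the implication ``$\Rightarrow$'', consider the function
\[
h(\omega)\coloneqq \norm{u_1(\omega)}+\norm{u_2(\omega)}-\norm{u(\omega)}.
\]
By the triangle inequality in $Y$, $h\geq 0$ almost everywhere. The function $h$ is measurable and integrable, because the norm of a strongly measurable function is measurable. The hypothesis gives $\int_\Omega h\,\text{d}\mu=\norm{u_1}+\norm{u_2}-\norm{u}=0$. A non-negative measurable function with zero integral vanishes $\mu$-a.e., so $h=0$ almost everywhere, which is exactly the claimed pointwise equality.

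I do not expect a real obstacle. The only points needing care are the measurability and integrability of $\omega\mapsto\norm{u_i(\omega)}$, which hold for Bochner integrable functions, and the fact that the pointwise identities are only asserted modulo null sets. The latter is consistent with the paper's convention of identifying functions that agree almost everywhere.
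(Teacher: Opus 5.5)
Your proof is correct and matches the paper's approach. The paper gives no written proof and simply calls the lemma an elementary consequence of the triangle inequality under the isometric identification $L_1(\mu)\pten Y=L_1(\mu,Y)$. Your argument, that the non-negative defect $\norm{u_1(\omega)}+\norm{u_2(\omega)}-\norm{u(\omega)}$ has zero integral and so vanishes $\mu$-a.e., is exactly that consequence written out.
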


\section{Norm-attaining functions in \texorpdfstring{$L_1(\mu, Y)$}{L1(mu,Y)} with \texorpdfstring{$Y$}{Y} strictly convex}\label{sect:strictlyconvex}

Our first goal in this section is to identify the functions in $\NA_\pi(L_1(\mu)\pten Y)$, for a strictly convex Banach space $Y$. Recall that a Banach space $Y$ is \textit{strictly convex}, if for every $x,y\in S_Y$ with $x\neq y$ we have $\norm{x+y}<2$.

We start by characterizing the optimal representations of elements in $L_1(\mu)\pten Y$. 

\begin{prop}{\label{prop:L1TensorYStrictlyConvex}}
    Let $(\Omega, \Sigma, \mu)$ be a measure space and let $Y$ be a strictly convex Banach space. Let $u=\sum_{i=1}^{\infty} f_i\otimes y_i \in L_1(\mu)\ptensor Y$, such that the vectors $y_i$ are pairwise non-colinear. Then, $$\norm{\sum_{i=1}^{\infty} f_i\otimes y_i}=\sum_{i=1}^{\infty} \norm{f_i\otimes y_i}$$ if and only if $\mu(\supp{f_i}\cap\supp{f_j})=0$, for all $i\neq j\in \natu$.
\end{prop}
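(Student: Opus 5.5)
The backward implication is exactly Remark~\ref{Rema:DisjointSuppNA}. If the supports of the $f_i$ are pairwise disjoint up to null sets, the computation there gives $\norm{u}=\sum_i\norm{f_i}\norm{y_i}=\sum_i\norm{f_i\otimes y_i}$. Strict convexity and non-colinearity are not needed for this direction.

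For the forward implication, assume $\norm{u}=\sum_i\norm{f_i\otimes y_i}$ and fix $i\neq j$. First I reduce to two terms. By Corollary~\ref{coro:NormAttainingSumFactors}, applied to the decomposition $u=\sum_i f_i\otimes y_i$ with $F=\{i,j\}$, we get
\[
\norm{f_i\otimes y_i+f_j\otimes y_j}=\norm{f_i\otimes y_i}+\norm{f_j\otimes y_j}.
\]
Next I pass to a pointwise statement. By Lemma~\ref{Lema:NormAttainingSubsets}, with $u_1=f_i\otimes y_i$ and $u_2=f_j\otimes y_j$, this equality holds if and only if
\[
\norm{f_i(\omega)y_i+f_j(\omega)y_j}=|f_i(\omega)|\norm{y_i}+|f_j(\omega)|\norm{y_j}
\]
for $\mu$-a.e.\ $\omega$. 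Let $N$ be the null set where this fails. It then suffices to show that $(\supp f_i\cap\supp f_j)\setminus N$ is empty.

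The key step is the geometric fact about strictly convex spaces: if $a,b\in Y\setminus\{0\}$ satisfy $\norm{a+b}=\norm a+\norm b$, then $a/\norm a=b/\norm b$. To prove it, assume $\norm a\ge\norm b$ and set $\hat a=a/\norm a$, $\hat b=b/\norm b$. The triangle inequality gives
\[
\norm{a+b}\le \norm{a}\,\norm{\hat a-\hat b\,}\ldots
\]
more precisely, writing $a+b=\norm b(\hat a+\hat b)+(\norm a-\norm b)\hat a$, we get $\norm{a+b}\le\norm b\,\norm{\hat a+\hat b}+\norm a-\norm b$. Combined with equality, this forces $\norm{\hat a+\hat b}=2$, and strict convexity then gives $\hat a=\hat b$. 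In particular $a$ and $b$ are colinear.

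Now take $\omega\in(\supp f_i\cap\supp f_j)\setminus N$ and apply this with $a=f_i(\omega)y_i$ and $b=f_j(\omega)y_j$. Both are nonzero, because $f_i(\omega)\neq0$, $f_j(\omega)\neq0$, and $y_i,y_j\neq0$ (pairwise non-colinearity excludes zero vectors when there are at least two terms). The conclusion $\hat a=\hat b$ makes $y_i$ and $y_j$ colinear, which is a contradiction. Hence $\mu(\supp f_i\cap\supp f_j)=0$.

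The only delicate point is the strictly convex lemma above, which the decomposition handles. One should also be careful with complex scalars: there colinearity means $y_j=cy_i$ with $c\in\K$, and the conclusion $\hat a=\hat b$ still yields such a $c$.
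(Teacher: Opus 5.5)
Your proof is correct and is essentially the paper's argument: the same reduction to two terms via Corollary~\ref{coro:NormAttainingSumFactors}, the same use of Lemma~\ref{Lema:NormAttainingSubsets}, and your splitting $a+b=\norm{b}(\hat a+\hat b)+(\norm{a}-\norm{b})\hat a$ is precisely the paper's $f_1(y_1+\theta y_2)+(f_2-\theta f_1)y_2$. The only difference is that you apply it pointwise instead of integrating over the set $A$, which spares you the measurable phase $\theta$ and the $A$/$B$ case split. You should, however, delete the garbled first display $\norm{a+b}\le\norm{a}\,\norm{\hat a-\hat b}\ldots$, which is false as written (take $a=b$); the ``more precisely'' inequality is the one you need.
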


\begin{proof}
    The implication from right to left follows from Remark \ref{Rema:DisjointSuppNA}. 

    In order to prove the converse, note that by Corollary \ref{coro:NormAttainingSumFactors} it suffices to consider the case where $u=f_1\otimes y_1+f_2\otimes y_2$ and $\norm{u}=\norm{f_1\otimes y_1}+\norm{f_2\otimes y_2}$. Also, without any loss of generality, we can assume that $\norm{y_1}=\norm{y_2}=1$. 
    
    Now, assume that $\mu(\supp{f_1}\cap \supp{f_2})>0$. Then, define \begin{align*}A&=\{ \omega \in \supp f_1 \cap \supp f_2 :  |f_1(\omega)|\leq |f_2(\omega)|\},\\ B&=\{ \omega \in \supp f_1 \cap \supp f_2 :  |f_1(\omega)|> |f_2(\omega)|\}.\end{align*} 
    It is clear that either $\mu(A)>0$ or $\mu(B)>0$. Switching the roles of $f_1$ and $f_2$ if needed we suppose that $\mu(A)>0$. Define the measurable map $\theta : \Omega \rightarrow \K$ given by $$\theta(\omega)=\begin{cases}
\frac{|f_1(\omega)|f_2(\omega)}{|f_2(\omega)|f_1(\omega)}, & \text{if } \omega \in A,\\
0,  & \text{if } \omega \in \Omega\setminus A,
\end{cases}$$
and observe that $|\theta(\omega)|=1$ and $\frac{f_2(\omega)-\theta(\omega)f_1(\omega)}{\theta(\omega)f_1(\omega)}\in [0,+\infty)$ for $\omega \in A$, $\mu-$a.e.
   On the one hand, we have that 
    \begin{align*}
        &\int_{A} \norm{f_1(\omega)y_1+f_2(\omega)y_2}\text{ d}\mu \\&=\int_{A} \norm{f_1(\omega)(y_1+\theta(\omega)y_2)+(f_2(\omega)-\theta(\omega)f_1(\omega))y_2}\text{ d}\mu \\ &\leq \int_{A}|f_1(\omega)|\norm{y_1+\theta(\omega)y_2}\text{ d}\mu+\norm{y_2}\int_{A}|f_2(\omega)-\theta(\omega)f_1(\omega)|\text{ d}\mu.
    \end{align*}
    On the other hand, due to Lemma \ref{Lema:NormAttainingSubsets} 
    we obtain
    \begin{align*}
        &\int_{A} \norm{f_1(\omega)y_1+f_2(\omega)y_2} \text{ d}\mu \\&=\int_{A} \norm{f_1(\omega)y_1} \text{ d}\mu +\int_{A}\norm{f_2(\omega)y_2} \text{ d}\mu \\&=\norm{y_1}\int_{A} |f_1(\omega)|\text{ d}\mu+  \norm{y_2}\int_{A} \left(|\theta(\omega)f_1(\omega)|+|f_2(\omega)-\theta(\omega)f_1(\omega)|\right)\text{ d}\mu \\&= \left(\norm{y_1}+\norm{y_2}\right)\int_{A} |f_1(\omega)|\text{ d}\mu+  \norm{y_2}\int_{A} |f_2(\omega)-\theta(\omega)f_1(\omega)|\text{ d}\mu,
    \end{align*}
    where the second equality follows from the definition of $A$. Thus, since $Y$ is strictly convex, we have that $\norm{y_1+\theta(\omega)y_2}<\norm{y_1}+\norm{\theta(\omega)y_2}=\norm{y_1}+\norm{y_2}$  for $\omega\in A$, $\mu$-a.e., and the contradiction is obtained.
\end{proof}

Now, we can identify the set of norm-attaining tensors in $L_1(\mu)\pten Y$ for a strictly convex space $Y$. As far as we know, this is the first case when such identification of $\NA_\pi(X\pten Y)$ is achieved  (apart from the ones where every tensor attains its norm). 

\begin{thm}\label{thm:SuppDisjNormAttaining}
    Let $(\Omega, \Sigma, \mu)$ be a measure space and let $Y$ be a strictly convex Banach space. Given $u\in L_1(\mu)\pten Y$, the following assertions are equivalent.
    \begin{itemize}
        \item[i)] $u\in \NA_\pi (L_1(\mu)\pten Y)$.
        \item[ii)] $u=\sum_{i=1}^{\infty} f_i\otimes y_i$ with $\mu\left(\supp{f_i}\cap \supp{f_j}\right)=0$, for all $i\neq j$. 
        \item[iii)] There is a sequence $(y_n)_{n\in \N}\subseteq S_Y$ such that $$\mu\left\{ \omega \in \Omega : u(\omega)\notin \bigcup_{n\in \N} \spann\{y_n\}\right\}=0.$$
    \end{itemize}
\end{thm}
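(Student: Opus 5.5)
We will prove the cycle i) $\Rightarrow$ ii) $\Rightarrow$ iii) $\Rightarrow$ i).

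\emph{i) $\Rightarrow$ ii).} Suppose $u\in \NA_\pi(L_1(\mu)\pten Y)$ and take an optimal representation $u=\sum_{n} f_n\otimes y_n$. We apply Lemma~\ref{l:PairwiseNonColinear} to pass to an optimal representation $u=\sum_{i\in I} f'_i\otimes y_i$ in which the $y_i$ are pairwise non-colinear. We may discard the terms with $y_i=0$, since the zero vector is colinear with everything; for the same reason, the maximal family chosen in that lemma contains at most one zero vector, and that term contributes nothing. Proposition~\ref{prop:L1TensorYStrictlyConvex} then shows that the $f'_i$ have pairwise $\mu$-null intersections of supports. Reindexing $I$ by $\N$, and padding with zeros if $I$ is finite, gives ii).

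\emph{ii) $\Rightarrow$ iii).} Assume $u=\sum_i f_i\otimes y_i$ with the $f_i$ having a.e. disjoint supports. Since the series converges in $L_1(\mu,Y)$, some subsequence of partial sums converges $\mu$-a.e. Disjointness means that, outside a null set, each $\omega$ lies in at most one $\supp f_i$. For such $\omega$ every partial sum is eventually either $f_i(\omega)y_i$ or $0$, so $u(\omega)=f_i(\omega)y_i$ or $u(\omega)=0$ almost everywhere. Discard the zero $y_i$ and normalise the rest to $y_i/\norm{y_i}\in S_Y$. If all $y_i$ vanish, then $u=0$ and any sequence in $S_Y$ works. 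Hence $u(\omega)\in\bigcup_n \spann\{y_n\}$ for $\mu$-a.e. $\omega$, which is iii).

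\emph{iii) $\Rightarrow$ i).} This is the step requiring some care, namely measurability and integrability of the coefficients. Given $(y_n)\subseteq S_Y$, we may remove repetitions up to colinearity, so the lines $\spann\{y_n\}$ meet pairwise only at $0$. Set
\[
E_n=\{\omega: u(\omega)\in \spann\{y_n\}\setminus\{0\}\}.
\]
These sets are pairwise disjoint and measurable, since $u$ is strongly measurable and $\spann\{y_n\}$ is closed. By hypothesis, $\bigcup_n E_n$ covers $\supp u$ up to a null set. On $E_n$ write $u(\omega)=f_n(\omega)y_n$. Choosing by Hahn--Banach some $y_n^*\in S_{Y^*}$ with $y_n^*(y_n)=1$, we get $f_n=\chi_{E_n}\,(y_n^*\circ u)$, which is measurable. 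Moreover $|f_n|=\norm{u}\chi_{E_n}$, so $f_n\in L_1(\mu)$ and $\sum_n\norm{f_n}=\norm{u}<\infty$. Hence the series $\sum_n f_n\otimes y_n$ converges absolutely in $L_1(\mu,Y)$. Its sum equals $u$ a.e., by dominated convergence applied to the pointwise sums. The supports $\supp f_n\subseteq E_n$ are pairwise disjoint, so Remark~\ref{Rema:DisjointSuppNA} gives $\norm{u}=\sum_n\norm{f_n}\norm{y_n}$, and therefore $u\in\NA_\pi(L_1(\mu)\pten Y)$. This step, like ii) $\Rightarrow$ iii), does not use strict convexity; strict convexity enters only through Proposition~\ref{prop:L1TensorYStrictlyConvex} in i) $\Rightarrow$ ii).
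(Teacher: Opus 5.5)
Your proof is correct and follows the paper's route. For i) $\Rightarrow$ ii) you use Lemma~\ref{l:PairwiseNonColinear} together with Proposition~\ref{prop:L1TensorYStrictlyConvex}, and for iii) $\Rightarrow$ i) you use the disjointly supported coefficients $(y_n^*\circ u)\chi_{E_n}$ with Remark~\ref{Rema:DisjointSuppNA}, exactly as the paper does.

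The only difference is in how measurability of $\{\omega: u(\omega)\in\spann\{y_i\}\}$ is obtained. The paper identifies these sets, via strict convexity, with the zero sets of $\norm{u(\cdot)}-|y_i^*(u(\cdot))|$. You instead argue directly that the line is closed and $u$ is strongly measurable, which shows, as you note, that this implication does not need strict convexity. You also check that the series converges to $u$, which the paper leaves as an easy verification.
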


\begin{proof}
    It is clear that ii) $\Rightarrow$ iii). \\Now, to show that     
    i) $\Rightarrow$ ii), pick $u\in \NA_{\pi}(L_1(\mu)\ptensor Y)$. Hence, there is some representation $\sum_{i=1}^\infty f_i \otimes y_i$ such that $\norm{u}=\sum_{i=1}^\infty \norm{f_i}\norm{y_i}$. We may assume that the vectors $y_i$ are pairwise non-colinear, by Lemma \ref{l:PairwiseNonColinear}. 
    Thus, it follows from Proposition \ref{prop:L1TensorYStrictlyConvex}.

    For iii) $\Rightarrow$ i), suppose that there is a sequence $(y_n)_n\subseteq S_Y$ satisfying that $\mu\left\{ \omega \in \Omega : u(\omega)\notin \bigcup_{n\in \N} \spann\{y_n\}\right\}=0.$ Let $I\subseteq \N$ be maximal such that the vectors $(y_i)_{i\in I}$ are pairwise non-colinear. Without loss of generality we may assume that $\norm{y_i}=1$, for all $i\in I$. Consider $(y_i^*)_{i\in I}\subseteq S_{Y^*}$ a sequence of functionals satisfying $y_i^*(y_i)=1$, for all $i\in I$. Moreover, for each $i\in I$, define the continuous map $f_i \colon Y\rightarrow [0,+\infty)$ given by $$f_i(y)=\norm{y}-|y_i^*(y)|, \quad \forall y\in Y.$$ Hence, consider the measurable sets $A_i\coloneqq (f_i\circ u)^{-1}(\{0\})$, for each $i\in I$ (observe that $u$ is strongly measurable and $f_i$ is continuous so the composition yields a $\Sigma$-Borel map).  Since $Y$ is strictly convex, it follows that $\mu (A_i\Delta B_i)=0$, where $$B_i=\{\omega\in \Omega : u(\omega)\in \spann\{y_i\}\}, \quad \forall i\in I.$$ It is clear $\mu(B_i\cap B_j)=\mu\{ \omega\in \Omega: u(\omega)=0\}$ for $i\neq j$. Writing $g_i=(y_i^*\circ u)\chi_{B_i}$ for all $i\in I$, and observing that $\mu(\supp g_i\cap \supp g_j)=0$ for all $i\neq j$, we conclude  $$\norm{\sum_{i\in I} g_i\otimes y_i}=\sum_{i\in I} \norm{g_i}\norm{y_i},$$
    thanks to Remark \ref{Rema:DisjointSuppNA}. Finally, the equality $u=\sum_{i\in I} g_i\otimes y_i$ is an easy verification. 
\end{proof}

We will see later in Proposition \ref{prop:L1SuppDisjointNormAttaining} that the statement of Theorem \ref{thm:SuppDisjNormAttaining} does not hold in the non-strictly convex case.

Next corollary asserts that for each norm-attaining tensor, there is a unique optimal representation up to scalar multiplication.

\begin{coro}\label{cor:uniquerepresentation}
    Let $(\Omega, \Sigma, \mu)$ be a measure space and let $Y$ be a strictly convex Banach space. Let $u\in \NA_\pi (L_1(\mu)\pten Y)$. Then, there is a set $I\subseteq \N$, a sequence of pairwise non-colinear vectors  $(y_i)_{i\in I}\subseteq S_Y$ and a sequence of non-zero functions $(f_i)_{i\in I}\subseteq L_1(\mu)$, such that $$u=\sum_{i\in I} f_i\otimes y_i \quad \text{ and } \quad \norm{u}=\sum_{i\in I} \norm{f_i}.$$
    Indeed, for each $i\in I$, the function $f_i\in L_1(\mu)$ is given by $f_i=(y_i^*\circ u)\chi_{E_i}$, where $y_i^*\in S_{Y^*}$ is any norming functional of $y_i$ (i.e. $y_i^*(y_i)=1$) and $E_i\subseteq \Omega$ is the measurable set $\{\omega \in \Omega : u(\omega)\in \spann\{y_i\}\}$.
    
    Furthermore, the above representation is unique up to scalar multiplication.
\end{coro}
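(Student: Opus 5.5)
Existence comes from Theorem \ref{thm:SuppDisjNormAttaining}; uniqueness comes from Lemma \ref{l:PairwiseNonColinear} together with Proposition \ref{prop:L1TensorYStrictlyConvex}.

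\emph{Existence.} Since $u\in \NA_\pi(L_1(\mu)\pten Y)$, condition iii) of Theorem \ref{thm:SuppDisjNormAttaining} holds for some sequence in $S_Y$. I will extract a pairwise non-colinear subfamily $(y_i)_{i\in I}$. I then discard every index $i$ for which $E_i=\{\omega: u(\omega)\in\spann\{y_i\}\}$ differs from $\{u=0\}$ only by a null set. The construction in the proof of iii) $\Rightarrow$ i) then gives $u=\sum_{i\in I} (y_i^*\circ u)\chi_{E_i}\otimes y_i$.

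Two points need checking here.
\begin{itemize}
\item The choice of norming functional $y_i^*$ is irrelevant. On $E_i$ we have $u(\omega)=c(\omega)y_i$, so $y_i^*(u(\omega))=c(\omega)$ for any $y_i^*$ with $y_i^*(y_i)=1$.
\item The supports $E_i\setminus\{u=0\}$ are pairwise disjoint, by non-colinearity. Remark \ref{Rema:DisjointSuppNA} then gives $\norm{u}=\sum_{i\in I}\norm{f_i}$, and each $f_i\neq 0$ by the discarding step.
\end{itemize}

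\emph{Uniqueness.} Let $u=\sum_{j\in J} g_j\otimes z_j$ be any optimal representation with $z_j\in S_Y$ pairwise non-colinear and $g_j\neq 0$. Proposition \ref{prop:L1TensorYStrictlyConvex} shows that the $g_j$ have pairwise a.e.\ disjoint supports. Hence on $\supp g_j$ we have $u(\omega)=g_j(\omega)z_j\neq 0$.

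Fix such a $j$. The set $\supp g_j$ has positive measure, and on it $u(\omega)\in\spann\{z_j\}\setminus\{0\}$. So $\supp g_j\subseteq E_i$ up to a null set, where $i$ is the unique index of $I$ with $z_j$ colinear to $y_i$. Such an $i$ exists because $u$ takes values in $\bigcup_{i\in I}\spann\{y_i\}$ a.e. It is unique because two non-colinear lines meet only at $0$.

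By pairwise non-colinearity this defines an injective map $J\to I$. Symmetrically, each $E_i\setminus\{u=0\}$ has positive measure and is covered a.e.\ by the sets $\supp g_j$. Therefore some $z_j$ is colinear to $y_i$, and the map is a bijection.

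For the matched pair, write $z_j=\lambda y_i$ with $|\lambda|=1$. Then $g_j\lambda=f_i$ a.e.\ on $E_i$, since both equal the coefficient of $u(\omega)$ along $y_i$. Both functions also vanish off $E_i$. Indeed, on $\supp g_{j'}$ for $j'\neq j$ the value $u(\omega)$ lies on a different line and is nonzero, so $\omega\notin E_i$. This gives $g_j\otimes z_j=f_i\otimes y_i$ with a unimodular rescaling, i.e.\ uniqueness up to scalar multiplication.

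\emph{Main obstacle.} The delicate step is the null-set bookkeeping in the matching argument, in particular showing that $\supp g_j$ really lies inside $E_i$ and that the sets $E_i\setminus\{u=0\}$ are pairwise disjoint. Both reduce to the elementary fact that distinct lines through the origin intersect only at $0$. I will handle this by working throughout on the complement of $\{u=0\}$.
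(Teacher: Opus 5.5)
Your proposal is correct and follows essentially the paper's route. Existence comes from the construction in iii)$\Rightarrow$i) of Theorem \ref{thm:SuppDisjNormAttaining}, where your explicit discarding of indices with $\mu(E_i\setminus\{u=0\})=0$ is a detail the paper leaves implicit. Uniqueness comes from Proposition \ref{prop:L1TensorYStrictlyConvex} plus a bijection of index sets obtained by matching colinear directions on sets of positive measure.

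The only cosmetic difference is that you match an arbitrary optimal representation against the canonical one $f_i=(y_i^*\circ u)\chi_{E_i}$, whereas the paper matches two arbitrary ones via $\mu(\supp f_i\cap\supp g_j)>0$. Also, Lemma \ref{l:PairwiseNonColinear} is not actually needed for the uniqueness part.
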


\begin{proof}
    The existence of such a representation follows from the proof of Theorem~\ref{thm:SuppDisjNormAttaining}. To prove the uniqueness, suppose there are two optimal representations $$u=\sum_{i\in I} f_i\otimes y_i \quad \text{ and } \quad u=\sum_{j\in J} g_j\otimes z_j.$$
     Applying Proposition \ref{prop:L1TensorYStrictlyConvex}, we deduce that $\mu(\supp f_i \cap \supp f_j)=0$ for $i, j\in I, \ i\neq j$ and $\mu(\supp g_i\cap \supp g_j)=0$ for $i,j\in J, \ i\neq j$. 
    Since $\mu(\supp u \Delta \bigcup_{j\in J} \supp g_j)=0$, and the functions $f_i$ are all non-zero, it is clear that for each $i\in I$, there is some $j\in J$ satisfying that $\mu(\supp f_i \cap \supp g_j)>0$. We claim that this relation defines a bijection $\varphi : I\rightarrow J$. 

    First of all, it is well defined because if there is some $i\in I$ and $j_1,j_2\in J$ such that $\mu(\supp f_i\cap \supp g_{j_1})>0$ and $\mu(\supp f_i\cap \supp g_{j_2})>0$, then 
        \begin{align*}
            f_i(\omega)y_i&=u(\omega)=g_{j_1}(\omega)z_{j_1}, \quad \omega\in \supp f_i\cap \supp g_{j_1}, \ \mu-\text{a.e.},
            \\  f_i(\omega)y_i&=u(\omega)=g_{j_2}(\omega)z_{j_2}, \quad \omega\in \supp f_i\cap \supp g_{j_2}, \ \mu-\text{a.e.},
        \end{align*}
        so $z_{j_1},z_{j_2}\in \spann\{y_i\}$ which is impossible since the vectors $z_j$ are pairwise non-colinear. Thus, $\varphi : I \rightarrow J$ is well-defined. Analogously to the above, switching the roles of $I$ and $J$, it is easy to prove that $\varphi$ is injective. Finally, it must be surjective because $\mu(\supp u \Delta \bigcup_{i\in I} \supp f_i)=0$ and the functions $g_j$ are all non-zero.
         Once we have proven that $\varphi$ is a bijection, it follows that $\mu(\supp f_i \Delta \supp g_{\varphi(i)})=0$ for all $i\in I$. Furthermore, for each $i\in I$, we have
        \begin{equation}\label{Eq1}
            f_i(\omega)y_i=u(\omega)=g_{\varphi(i)}(\omega)z_{\varphi(i)}, \quad \omega \in \supp f_i, \ \mu-\text{a.e.},
        \end{equation}
        so $z_{\varphi(i)}\in \spann\{y_i\}$. Hence, there is some $\theta_i\in \K$ with $\theta_iy_i=z_{\varphi(i)}$. Thus, $|\theta_i|=1$ and $\frac{1}{\theta_i}f_i=g_{\varphi(i)}$.
\end{proof}

As a consequence, we can provide a counterexample to the following question from \cite[Section 2.3 (p. 11)]{ADGVJR}:
$$\textit{Is $\text{FNA}_\pi (X\pten Y)=\NA_\pi(X\pten Y)\cap (X\otimes Y)$, for all $X,Y$?}$$
where $\text{FNA}_\pi (X\pten Y)$ denotes the set of \textit{finitely norm-attaining tensors} (i.e. elements that attain their norm at some finite representation).
\begin{rema}\label{rema:FNA}
    We are going to show that there exist finite-rank norm-attaining tensors that only attain their norm at a series of infinite terms. Indeed, let $Y$ be a strictly convex Banach space with $\dim(Y)\geq 2$. Consider a sequence $(f_n)_{n\in \N}\subseteq L_1([0,1])$ of non-zero functions satisfying that $\mu(\supp f_i \cap \supp f_j)=0$, for all $i\neq j$, and $\sum_{n=1}^\infty \norm{f_n}<\infty$, and two non-colinear vectors $y_1,y_2\in S_Y$. Then, $$u=\sum_{n=1}^\infty f_n \otimes \left(y_1+\frac{1}{n}y_2\right) =  \left(\sum_{n=1}^\infty f_n\right)\otimes y_1 +  \left(\sum_{n=1}^\infty \frac{1}{n}f_n\right)\otimes y_2\in L_1([0,1])\otimes Y.$$ In fact, $\norm{u}=\sum_{n=1}^\infty \norm{f_n}\norm{y_1+\frac{1}{n}y_2}$ since $\mu(\supp f_i \cap \supp f_j)=0$ for all $i\neq j$. Thus, $u\in \NA_\pi (L_1([0,1])\pten Y)$. However, it follows from Corollary \ref{cor:uniquerepresentation}, that every optimal representation of $u$ must contain an infinite number of elementary tensors.
\end{rema}

In the case $\K=\R$, we can relate the property of norm-attaining with a different concept. Recall that given a finitely additive vector measure $\nu \colon \Sigma\to Y$, its \textit{total variation} or \textit{total 1-variation} (see e.g. \cite[Definition 4.1]{CRRSP} and \cite[p. 93]{ryan}) is defined as $$|\nu|_1(\Omega)\coloneqq \sup \left\{ \sum_{i=1}^n \norm{\nu(A_i)}: \{A_1,\ldots,A_n\} \text{ is a partition of } \Omega \right\}.$$
We will say that $\nu$ has \textit{bounded total variation} if $|\nu|_1(\Omega)$ is finite.

Furthermore, for each $u\in L_1(\mu,Y)$ we can define the vector measure $\nu_u: \Sigma\to Y$ given by $$\nu_u(A)\coloneqq \int_A u(\omega) \text{ d}\mu, \quad \forall A\in \Sigma.$$ It can be seen that $\nu_u$ is finitely additive, has bounded total variation, and $|\nu_u|_1(\Omega)=\norm{u}$ (see e.g. \cite[Theorem 4.3]{CRRSP}), so we can consider the following notion.

\begin{defi}
    Let $(\Omega,\Sigma,\mu)$ be a measure space and $Y$ be a Banach space. Given $u\in L_1(\mu,Y)$ we say that it \textit{attains its total variation} if there is a sequence of pairwise disjoint measurable sets $(A_n)_{n=1}^\infty\subseteq \Sigma$ such that $$|\nu_u|_1(\Omega)=\sum_{n=1}^\infty \norm{\nu_u(A_n)}.$$
\end{defi}
Thus, we can state the following result, which in particular shows the equivalence between attaining the projective norm and the total variation, provided the target space is real and strictly convex. Recall that $L_\infty(\mu,Y^*)\subseteq L_1(\mu,Y)^*$ isometrically under the natural duality mapping. Moreover, a measurable map $f$ is said to be \textit{essentially countably valued} if there is some $N\in \Sigma$ with $\mu(N)=0$ such that the set $\{f(\omega) : \omega \notin N\}$ is countable.

\begin{thm}\label{thm:NAtotalvariation}
    Let $(\Omega, \Sigma, \mu)$ be a measure space and let $Y$ be a real strictly convex Banach space. Given $u\in L_1(\mu)\pten Y$, the following assertions are equivalent.
    \begin{itemize}
        \item[i)] $u\in \NA_\pi (L_1(\mu)\pten Y)$.
        \item[ii)] $u=\sum_{i=1}^{\infty} f_i\otimes y_i$ with $\mu\left(\supp{f_i}\cap \supp{f_j}\right)=0$, for all $i\neq j$. 
        \item[iii)] $P\circ u$ is essentially countably valued, where $P:Y\rightarrow S_Y$ is given by $P(y)=\frac{y}{\norm{y}}$ if $y\neq 0$ and $P(0)=0$.
        \item[iv)] There is an essentially countably valued $g\in S_{L_\infty(\mu,Y^*)}$ with $\langle g,u\rangle=\norm{u}$.
        \item[v)] $u$ attains its total variation.
    \end{itemize}
\end{thm}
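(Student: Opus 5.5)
The equivalence i) $\Leftrightarrow$ ii) is already contained in Theorem \ref{thm:SuppDisjNormAttaining}, so the plan is to hang iii), iv) and v) onto ii). I would prove ii) $\Rightarrow$ iii) $\Rightarrow$ ii), then ii) $\Rightarrow$ iv) $\Rightarrow$ iii), and finally ii) $\Rightarrow$ v) $\Rightarrow$ ii). Realness of the scalars is used throughout: in the real case $\spann\{y\}\cap S_Y=\{\pm y\}$, so a function taking values in countably many lines, once normalised, takes countably many values.

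For ii) $\Rightarrow$ iii), normalise so that $\norm{y_i}=1$. For a.e.\ $\omega$ in $\supp f_i$ we have $u(\omega)=f_i(\omega)y_i$, hence $P(u(\omega))=\sign(f_i(\omega))\,y_i\in\{\pm y_i\}$. Off $\bigcup_i\supp f_i$ we have $u=0$. So $P\circ u$ is a.e.\ valued in $\{0\}\cup\{\pm y_i\}$, which is countable.

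For iii) $\Rightarrow$ ii), let $\{z_n\}$ be the countable set of nonzero essential values of $P\circ u$. Then $u(\omega)\in\bigcup_n\spann\{z_n\}$ a.e., and condition iii) of Theorem \ref{thm:SuppDisjNormAttaining} gives i), hence ii).

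For ii) $\Rightarrow$ iv), again take $\norm{y_i}=1$, choose $y_i^*\in S_{Y^*}$ with $y_i^*(y_i)=1$, and fix some $y_0^*\in S_{Y^*}$. Define
\[
g=\sum_i \sign(f_i)\,y_i^*\,\chi_{\supp f_i}+y_0^*\,\chi_{\Omega\setminus\bigcup_i\supp f_i}.
\]
Then $g$ is countably valued and $\norm{g(\omega)}=1$, and
\[
\langle g,u\rangle=\sum_i\int|f_i|=\norm{u}.
\]

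For iv) $\Rightarrow$ iii), let $g$ take the values $\{y_n^*\}$ on the sets $C_n$. Since $\langle g,u\rangle=\norm{u}$ and $g(\omega)(u(\omega))\le\norm{u(\omega)}$ pointwise, equality $y_n^*(u(\omega))=\norm{u(\omega)}$ holds a.e.\ on $C_n$. Strict convexity of $Y$ means each $y_n^*$ attains its norm at no more than one point of $S_Y$, say $z_n$, whenever it attains it at all. Hence $P(u(\omega))\in\{0,z_n\}$ a.e.\ on $C_n$, and $P\circ u$ is countably valued.

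For ii) $\Rightarrow$ v), take $A_n=\supp f_n^+$ and $A_n'=\supp f_n^-$ (real scalars), together with the complement of their union. Then
\[
\nu_u(A_n)=\Bigl(\int f_n^+\Bigr)y_n \quad\text{and}\quad \nu_u(A_n')=-\Bigl(\int f_n^-\Bigr)y_n,
\]
and the sum of their norms is $\sum_n\norm{f_n}=\norm{u}=|\nu_u|_1(\Omega)$.

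For v) $\Rightarrow$ ii), which I expect to be the main obstacle, let $(A_n)$ be disjoint with $\sum_n\norm{\nu_u(A_n)}=\norm{u}$. The sets $A_n$ must cover $\supp u$ up to a null set. Otherwise, with $A_0$ the rest, we would get
\[
\norm{u}=\int_{A_0}\norm{u}+\sum_n\int_{A_n}\norm{u}\ \geq\ \int_{A_0}\norm{u}+\sum_n\norm{\nu_u(A_n)},
\]
forcing $\int_{A_0}\norm{u}=0$. The same chain also forces $\norm{\int_{A_n}u}=\int_{A_n}\norm{u}$ for each $n$. If $\nu_u(A_n)\neq0$, pick $y_n^*\in S_{Y^*}$ norming $\nu_u(A_n)$. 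Then $\int_{A_n}\bigl(\norm{u}-y_n^*(u)\bigr)=0$, so $y_n^*(u(\omega))=\norm{u(\omega)}$ a.e.\ on $A_n$. By strict convexity, $u(\omega)\in[0,\infty)\,z_n$ a.e.\ on $A_n$, where $z_n$ is the unique norming point of $y_n^*$ in $S_Y$. If instead $\nu_u(A_n)=0$, then $u=0$ a.e.\ on $A_n$. Thus $u$ takes values in countably many lines, and Theorem \ref{thm:SuppDisjNormAttaining} iii) yields i), hence ii). The delicate points are this a.e.\ equality argument in the Bochner integral and the uniqueness of norming points coming from strict convexity.
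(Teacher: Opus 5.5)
Your proof is correct. It uses the same ingredients as the paper but arranges the implications differently.

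The paper also obtains i) $\Leftrightarrow$ ii) $\Leftrightarrow$ iii) from Theorem \ref{thm:SuppDisjNormAttaining}, but then closes a single cycle iii) $\Rightarrow$ iv) $\Rightarrow$ v) $\Rightarrow$ iii):
\begin{itemize}
\item Its iii) $\Rightarrow$ iv) is your construction of $g$ from norming functionals.
\item Its v) $\Rightarrow$ iii) is essentially your v) $\Rightarrow$ ii).
\item Its iv) $\Rightarrow$ v) takes the $A_n$ to be the level sets of $g$ and observes
\[
\norm{u}=\langle g,u\rangle=\sum_n y_n^*\bigl(\nu_u(A_n)\bigr)\le\sum_n\norm{\nu_u(A_n)}\le|\nu_u|_1(\Omega)=\norm{u}.
\]
\end{itemize}
Thus, apart from the appeal to Theorem \ref{thm:SuppDisjNormAttaining}, the paper uses strict convexity only once and never needs to split into $f_n^{\pm}$. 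The cycle also makes visible that iv) $\Leftrightarrow$ v) holds for every Banach space $Y$, which the paper relies on in a later remark.

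Your star-shaped arrangement around ii) costs two extra arguments. Your iv) $\Rightarrow$ iii) repeats the uniqueness-of-norming-point step, and your ii) $\Rightarrow$ v) needs the sign splitting. In exchange it is more careful in two places:
\begin{itemize}
\item You derive from the norm equality that the $A_n$ exhaust $\supp u$ up to a null set. The paper only intersects the $A_n$ with $\supp u$, which gives just one inclusion, although the other inclusion also follows from its own chain of inequalities.
\item Your choice of $y_0^*$ off the support keeps $\norm{g}_\infty=1$ even in the degenerate case $u=0$.
\end{itemize}
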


\begin{proof}
    i) $\Leftrightarrow$ ii) $\Leftrightarrow$ iii) were proven in Theorem \ref{thm:SuppDisjNormAttaining} (notice that, in the real case, condition iii) of the present theorem is equivalent to condition iii) of Theorem \ref{thm:SuppDisjNormAttaining}).

    iii) $\Rightarrow$ iv) Let $P\circ u=\sum_{n=1}^\infty \chi_{A_n}\otimes \widetilde{y}_n$, where $(\widetilde{y}_n)_{n=1}^\infty \subseteq S_Y$ and $(A_n)_{n=1}^\infty \subseteq \Sigma$ are pairwise disjoint. Consider for each $n\in \N$, a functional $y_n^*\in S_{Y^*}$ such that $y_n^*(\widetilde{y}_n)=1$ and define $g\coloneqq \sum_{n=1}^\infty \chi_{A_n}\otimes y_n^*.$ It is clear that $g\in S_{L_\infty(\mu,Y^*)}$ and $\langle g, u\rangle =\norm{u}$.

    iv) $\Rightarrow$ v) Let $g=\sum_{n=1}^\infty \chi_{A_n} \otimes y_n^*$, where $(y_n^*)_{n=1}^\infty \subseteq B_{Y^*}$ and $(A_n)_{n=1}^\infty \subseteq \Sigma$ are pairwise disjoint. Therefore,
    \begin{align*}
        \norm{u}&=\langle g, u\rangle = \sum_{n=1}^\infty \left(\int_{A_n} y_n^*(u(\omega)) \text{ d}\mu\right) = \sum_{n=1}^\infty y_n^*\left(\int_{A_n} u(\omega) \text{ d}\mu\right)\\&\leq \sum_{n=1}^\infty \norm{\int_{A_n} u(\omega) \text{ d}\mu} \leq |\nu_u|_1(\Omega)=\norm{u},
    \end{align*}
    which yields $$\sum_{n=1}^\infty \norm{\nu_u(A_n)}=\sum_{n=1}^\infty \norm{\int_{A_n} u(\omega)\text{ d}\mu}=|\nu_u|_1(\Omega).$$

    v) $\Rightarrow$ iii) Let $(A_n)_{n=1}^\infty \subseteq \Sigma$ the sequence of pairwise disjoint measurable sets such that $|\nu_u|_1(\Omega)=\sum_{n=1}^\infty \norm{\nu_u(A_n)}$. Replacing each $A_n$ by $A_n\cap \supp u$ we may assume that $\mu(\supp u \Delta(\bigcup_{n=1}^\infty A_n))=0$. Hence, pick $(y_n^*)_{n=1}^\infty \subseteq S_{Y^*}$ such that $$y_n^*\left(\int_{A_n} u(\omega) \text{ d}\mu \right)=\norm{\int_{A_n} u(\omega) \text{ d}\mu}=\norm{\nu_u(A_n)}, \quad \forall n\in \N.$$ Then,
    \begin{align*}
        \norm{u}&=|\nu_u|_1(\Omega) = \sum_{n=1}^\infty \norm{\nu_u(A_n)}= \sum_{n=1}^\infty y_n^*\left(\int_{A_n} u(\omega) \text{ d}\mu \right) \\&= \sum_{n=1}^\infty \left(\int_{A_n} y_n^*(u(\omega)) \text{ d}\mu \right)\leq \sum_{n=1}^\infty \int_{A_n}\norm{u(\omega)} \text{ d}\mu =\norm{u}.
    \end{align*}
    This clearly implies that for each $n\in \N$, we have
    $$y_n^*(u(\omega))=\norm{u(\omega)}, \quad \omega\in A_n,\ \mu-\text{a.e.}$$
    Since $Y$ is strictly convex it yields that there is some $y_n\in S_Y$ such that $$\frac{u(\omega)}{\norm{u(\omega)}}= y_n, \quad \omega\in A_n, \ \mu-\text{a.e.}$$
    In other words, the map $P\circ u$ is essentially countably valued.
\end{proof}

\begin{rema}
    Let us see that for any given $u\in L_1(\mu,Y)$, attaining the projective norm and attaining the total variation may not be equivalent if a) $\K=\Complex$ or b) $Y$ is not strictly convex.
    \begin{itemize}
        \item[a)] If $\K=\Complex$, consider the map $u\in L_1([0,1],\Complex)$ given by $u(t)=e^{i2\pi t}$, for $t\in [0,1]$. It is clear that $u\in \NA_\pi(L_1([0,1])\pten \Complex)$ since $\Complex$ is 1-dimensional. However, let us show that $u$ does not attain its total variation. Suppose there is a sequence $(A_n)_{n=1}^\infty$ of pairwise disjoint Lebesgue measurable sets in $[0,1]$, such that
        $$1=\norm{u}=|\nu_u|_1([0,1])=\sum_{n=1}^\infty |\nu_u(A_n)|.$$ From the triangle inequality it follows that for each $n\in \N$, we have
        $$\lambda(A_n) = |\nu_u(A_n)|=\left|\int_{A_n} e^{i2\pi t} \text{ d}\lambda \right|.$$ Hence, writing $z_n=\frac{1}{\lambda(A_n)}\overline{\int_{A_n} e^{i2\pi t} \text{ d}\lambda}$, we obtain
        \begin{align*}
            \lambda(A_n)&=z_n \int_{A_n} e^{i2\pi t} \text{ d}\lambda=\text{Re }\left(z_n \int_{A_n} e^{i2\pi t} \text{ d}\lambda\right) \\&= \text{Re }\left(\int_{A_n} z_ne^{i2\pi t} \text{ d}\lambda\right) = \int_{A_n} \text{Re }\left(z_n e^{i2\pi t}\right) \text{ d}\lambda,
        \end{align*}
        so $1=\text{Re }\left(z_n e^{i2\pi t}\right)$, for $t\in A_n$, $\lambda-$a.e., because $|z_n e^{i2\pi t}|\leq 1$. This implies that $e^{i2\pi t}=\overline{z_n}$, for $t\in A_n$, $\lambda-$a.e. which yields  $\lambda(A_n)=0$, obtaining the contradiction. 
        \item[b)] We will see in the last section (Corollary \ref{cor:F(M)}) that $\NA_\pi(L_1(\mu)\pten \mathcal F(M))=L_1(\mu)\pten \mathcal F(M)$ for any measure $\mu$, whenever $M$ is a complete scattered metric space (clearly $\mathcal{F}(M)$ is not strictly convex). However, it is known that if every element in $L_\infty(\mu,Y^*)$ is a James boundary for $L_1(\mu,Y)$ and $\mu$ is a probability measure non purely atomic, then $Y^*$ has the Radon-Nikod\'{y}m Property \cite[Teorema A]{CP}. Furthermore, it is easy to see that iv) $\Leftrightarrow$ v) from Theorem \ref{thm:NAtotalvariation} holds for any Banach space $Y$, so if every element from $L_1(\mu,Y)$ attains its total variation then $L_\infty(\mu,Y^*)$ is, in particular, a James boundary. Thus, since $\mathcal{F}(M)^*$ fails the RNP (see e.g. \cite[Theorem~5]{CDW}), we conclude that for a probability non purely atomic measure $\mu$, there are norm-attaining tensors in $L_1(\mu,\mathcal F(M))$ that do not attain their total variation.
    \end{itemize}
\end{rema}

Our next goal is to determine whether every element in $L_1(\mu)\pten Y$ is a norm-attaining tensor. 

\begin{thm}{\label{thm:L1(mu)PurelyAtomic}}
    Let $(\Omega, \Sigma, \mu)$ be a measure space and $Y$ be a Banach space with $\dim(Y)\geq 2$. Then, $$L_1(\mu)\ptensor Y=  \left\{ u=\sum_{i=1}^{\infty} f_i\otimes y_i \in L_1(\mu)\ptensor Y: \mu\left(\supp{f_i}\cap \supp{f_j}\right)=0, \forall i\neq j \right\}$$ if and only if $\mu$ is purely atomic.
\end{thm}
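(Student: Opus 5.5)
We prove the two implications separately. For the "if" direction, we assume $\mu$ is purely atomic and fix $u\in L_1(\mu,Y)$. Because $u$ is Bochner integrable, its support $\supp u$ is $\sigma$-finite. We therefore write $\supp u=\bigcup_k E_k$ with $\mu(E_k)<\infty$, and each $E_k$ is purely atomic by hypothesis. Since each $E_k$ contains only countably many atoms, $\supp u$ is, up to a null set, a countable disjoint union of atoms $(A_n)_n$. On an atom every measurable scalar function is a.e. constant. The same holds for a strongly measurable $Y$-valued function, because it is an a.e. limit of simple functions and each of these is a.e. constant on the atom. Hence $u=\sum_n \chi_{A_n}\otimes y_n$ a.e., where $y_n$ is the a.e. value of $u$ on $A_n$. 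This series converges absolutely in $L_1(\mu,Y)$, since $\sum_n \mu(A_n)\norm{y_n}=\norm{u}<\infty$, and the supports $A_n$ are pairwise disjoint. This gives the required representation. Alternatively, one may invoke $L_1(\mu)=\ell_1(\Gamma)$ together with the obvious representation in $\ell_1(\Gamma,Y)$.

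For the "only if" direction, we assume $\mu$ is not purely atomic and take $A\in\Sigma$ with $0<\mu(A)<\infty$ containing no atom. By Lemma \ref{lema:Non-ConstantFunctionGeneralCase} there is a measurable $g\colon\Omega\to[0,\mu(A)]$ whose push-forward of $\mu|_A$ is Lebesgue measure. We then choose two linearly independent vectors $y_1,y_2\in Y$, which exist because $\dim Y\geq 2$, and define
\[ u=\chi_A\otimes y_1+(g\chi_A)\otimes y_2,\qquad u(\omega)=y_1+g(\omega)y_2 \text{ on } A. \]
The claim is that $u$ has no representation with disjointly supported $f_i$. Suppose instead that $u=\sum_i f_i\otimes y_i'$ with pairwise a.e. disjoint supports. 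Then for a.e. $\omega\in\supp f_i$ we have $u(\omega)=f_i(\omega)y_i'\in\spann\{y_i'\}$. Since $u(\omega)\neq 0$ on $A$, the sets $\supp f_i\cap A$ cover $A$ up to a null set, so some $B=\supp f_i\cap A$ has positive measure.

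On $B$, the vector $y_1+g(\omega)y_2$ lies in the one-dimensional space $\spann\{y_i'\}$. By linear independence of $y_1,y_2$, the map $t\mapsto \spann\{y_1+ty_2\}$ is injective. Hence $g$ is a.e. constant on $B$, and this contradicts the remark after Lemma \ref{lema:Non-ConstantFunctionGeneralCase}, which says $g$ is not constant on any positive-measure subset of $A$. Strictly, one needs $g$ to take a single value on a positive-measure subset. Here $g$ takes at most one value on $B$ off a null set, so $B$ minus that null set has positive measure and $g$ is constant on it, which suffices.

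The main obstacle is the measure-theoretic bookkeeping in the "if" direction. One has to justify that a strongly measurable function is a.e. constant on an atom, and that the support of $u$ is covered by countably many atoms even when $\mu$ is not $\sigma$-finite. The second point is handled by the $\sigma$-finiteness of supports of integrable functions. The "only if" construction is straightforward once Lemma \ref{lema:Non-ConstantFunctionGeneralCase} is available.
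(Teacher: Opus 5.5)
Your proof is correct and follows essentially the same route as the paper. The ``if'' direction is the paper's $L_1(\mu)\ptensor Y=\ell_1(\Gamma,Y)$ argument, which you unpack by hand over the atoms. The ``only if'' direction uses the same function $g$ from Lemma~\ref{lema:Non-ConstantFunctionGeneralCase}: your affine path $y_1+g(\omega)y_2$ plays the role of the paper's $g(\omega)e_1+(1-g(\omega))e_2$, so lying in a single line on a set of positive measure forces $g$ to be constant there, which is the contradiction.
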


\begin{proof} 
    If $\mu$ is purely atomic and $(E_\gamma)_{\gamma\in \Gamma}$ is the family of (disjoint) atoms of $\mu$, then $L_1(\mu)\pten Y =\ell_1(\Gamma)\pten Y=\ell_1(\Gamma, Y)$. Thus, for any $u\in \ell_1(\Gamma,Y)$ there is some countable subset $I\subseteq \Gamma$ such that $u=\sum_{i\in I} \chi_{E_i} \otimes y_i$, for some $(y_i)_{i\in I}\subseteq Y$. 

    Conversely, suppose that $\mu$ is not purely atomic. That is, there exists $A\in \Sigma$ with $0<\mu(A)<\infty$ such that $A$ contains no atoms. 
    Without loss of generality, we can assume that $\mu(A)=1$ and consider $g\colon \Omega \rightarrow [0,1]$ the nowhere constant function from Lemma \ref{lema:Non-ConstantFunctionGeneralCase}. Replacing $g$ by $g\chi_A$, we may assume that $g|_{\Omega\setminus A}=0$. 
    Next, pick $e_1,e_2\in S_Y$ being non-colinear and consider $u\colon \Omega \rightarrow Y$ given by
    $$u(\omega)= g(\omega)e_1+(\chi_A(\omega)-g(\omega))e_2, \quad  \omega \in \Omega, \ \mu-\text{a.e.} $$
    It is clear that $\mu(\supp{u}\Delta A)=0$  and $\norm{u(\omega)}\leq 1$, for $\omega\in A$,  $\mu-$a.e. Therefore, $u\in L_1(\mu,Y)=L_1(\mu)\pten Y$ and by hypothesis we can write $u$ as
    $$u=\sum_{i=1}^{\infty} f_i\otimes y_i, \quad \text{with} \quad  \mu(\supp{f_i}\cap \supp{f_j})=0,\ \forall i\neq j,$$
     where $f_i\in L_1(\mu)$ and $y_i\in Y$, for every $i\in \N$. Fix $i$ such that $\mu(\supp{f_{i}})>0$ and $y_i\neq 0$. 
    Hence, $u(\omega)= f_{i}(\omega)y_{i}$, for $\omega \in \supp f_i$, $\mu-$a.e., so in particular $\mu((\supp f_i\cap A)\Delta \supp f_i)=0$. Take the unique $\alpha_1, \alpha_2\in \K$ such that $\alpha_1 e_1 + \alpha_2 e_2 =y_i$, and then, for $\omega\in \supp f_i\cap A$, $\mu-$a.e., we have $$g(\omega)=f_i(\omega)\alpha_1, \quad \text{ and } \quad 1-g(\omega)=f_i(\omega)\alpha_2.$$ Thus, $\mu\left\{ \omega \in \supp f_i\cap A \ : \ g(\omega) = \frac{\alpha_1}{\alpha_1+\alpha_2} \right\}=\mu(\supp f_i\cap A)>0$. Therefore, $g$ is constant in some measurable subset of $A$ with positive measure, which gives the contradiction we were looking for.
\end{proof}

As a consequence, we get a characterization of the existence of non-norm-attaining tensors in $L_1(\mu, Y)$ for a strictly convex space $Y$. This extends  \cite[Example 3.12]{DJRRZ}, where it is considered the case when $Y=\ell_2^2$ and when $Y$ is a dual strictly convex space without the Radon-Nikodým property (RNP).
The non-strictly convex case is considered in \cite[Theorem 4.9]{ADGVJR}, showing that there are non-norm-attaining tensors provided  $Y$ is a dual space with the RNP and its predual contains a strictly convex subspace of dimension 2.

\begin{coro}\label{coro:everytensorinL1muY}
    Let $(\Omega, \Sigma, \mu)$ be a  measure space and let $Y$ be a strictly convex Banach space with $\dim(Y)\geq 2$. 
    The following statements are equivalent:
    \begin{itemize}
    \item[i)] $\NA_{\pi}(L_1(\mu)\ptensor Y)=L_1(\mu)\ptensor Y$. 
    \item[ii)] $\mu$ is purely atomic.
    \end{itemize}
\end{coro}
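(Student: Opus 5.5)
The corollary follows by combining Theorem \ref{thm:SuppDisjNormAttaining} with Theorem \ref{thm:L1(mu)PurelyAtomic}. The point is that, for strictly convex $Y$, the set $\NA_\pi(L_1(\mu)\pten Y)$ is exactly the set appearing on the right-hand side in Theorem \ref{thm:L1(mu)PurelyAtomic}.

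First, by the equivalence i) $\Leftrightarrow$ ii) in Theorem \ref{thm:SuppDisjNormAttaining}, which uses strict convexity of $Y$, we have
\[
\NA_\pi(L_1(\mu)\pten Y)=\left\{ u=\sum_{i=1}^{\infty} f_i\otimes y_i : \mu\left(\supp f_i\cap \supp f_j\right)=0,\ \forall i\neq j \right\}.
\]
Second, since $\dim(Y)\geq 2$, Theorem \ref{thm:L1(mu)PurelyAtomic} applies and says that this set equals all of $L_1(\mu)\pten Y$ if and only if $\mu$ is purely atomic. Chaining the two equalities gives i) $\Leftrightarrow$ ii).

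The implication ii) $\Rightarrow$ i) does not even need strict convexity. When $\mu$ is purely atomic, every element has a disjointly supported representation, and Remark \ref{Rema:DisjointSuppNA} then shows it is norm-attaining. Alternatively, one can use $L_1(\mu)=\ell_1(\Gamma)$ together with \cite[Proposition 3.6]{DJRRZ}.

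The direction that genuinely uses the hypotheses is i) $\Rightarrow$ ii). Strict convexity is what forces optimal representations to be disjointly supported, via Proposition \ref{prop:L1TensorYStrictlyConvex}. The construction in Theorem \ref{thm:L1(mu)PurelyAtomic} then supplies a function that admits no such representation whenever $\mu$ has a non-atomic part. Since both ingredients are already established, the proof reduces to this bookkeeping; the only thing to check is that both hypotheses, strict convexity and $\dim(Y)\geq 2$, are used in the direction they are needed.
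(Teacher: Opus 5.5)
Your proposal is correct and is exactly the paper's argument. The paper obtains the corollary by combining the equivalence i) $\Leftrightarrow$ ii) of Theorem~\ref{thm:SuppDisjNormAttaining}, which for strictly convex $Y$ identifies $\NA_\pi(L_1(\mu)\pten Y)$ with the set of elements admitting a disjointly supported representation, with Theorem~\ref{thm:L1(mu)PurelyAtomic}; your side remark that ii) $\Rightarrow$ i) needs no strict convexity is also correct.
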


\begin{proof}
    It follows from Theorems \ref{thm:SuppDisjNormAttaining} and \ref{thm:L1(mu)PurelyAtomic}.
\end{proof}

Recall that in \cite[Corollary 3.11]{DJRRZ} it is shown that if $\NA_\pi(X\pten Y)=X\pten Y$ then $\overline{\NA(X, Y^*)}=\mathcal L(X, Y^*)$. Thanks to Corollary \ref{coro:everytensorinL1muY}, we get examples showing that the converse statement does not hold: take $X=L_1([0,1])$ and $Y$ a Banach space such that $Y^*$ is a strictly convex space with the RNP. Then $\overline{\NA(X, Y^*)}=\mathcal L(X, Y^*)$ by \cite{Uhl} but $\NA_\pi(X\pten Y)\neq X\pten Y$.

In the following results, we will take advantage of the explicit description of $\NA_\pi(L_1(\mu)\pten Y)$ to analyze its topological properties in the case that there are non-norm-attaining tensors. First, we show that its complement is indeed dense in $L_1(\mu)\pten Y$. This compares to \cite[Theorem 3.3]{rueda23} and \cite[Remark 4.8]{ADGVJR}, where it is shown, for instance, that both $\NA_\pi(c_0\pten L_p)$ and its complement are dense in $c_0\pten L_p$ (for $1<p<\infty$, in the real setting, and for $p=\frac{2n}{2n-1}, \ n\in \N$, in the complex setting).

\begin{prop}\label{prop:NAComplementaryDense}
    Let $(\Omega, \Sigma, \mu)$ be a measure space and let $Y$ be a strictly convex Banach space with $\dim(Y)\geq 2$. If $\mu$ is not purely atomic, then $$\overline{L_1(\mu)\pten Y \setminus \NA_\pi(L_1(\mu)\pten Y)}=L_1(\mu)\pten Y.$$
\end{prop}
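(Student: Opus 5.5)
Since the simple functions are dense in $L_1(\mu)\pten Y=L_1(\mu,Y)$, it is enough to show that every simple function $s$ and every $\varepsilon>0$ admit a non-norm-attaining $v$ with $\norm{s-v}<\varepsilon$. The idea is to perturb $s$ by a small multiple of the function $u$ built in the proof of Theorem \ref{thm:L1(mu)PurelyAtomic}. That $u$ is supported on an atomless set $A$ with $0<\mu(A)=1$ and is defined by $u=g e_1+(\chi_A-g)e_2$ with $e_1,e_2\in S_Y$ non-colinear. The proof there shows that $u$ does not admit any representation as in Theorem \ref{thm:SuppDisjNormAttaining} ii). Hence $u\notin\NA_\pi(L_1(\mu)\pten Y)$.

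The main obstacle is that adding $\delta u$ to $s$ might accidentally produce a function with essentially countably many directions, which would make it norm-attaining. To control this, I would use characterization iii) of Theorem \ref{thm:SuppDisjNormAttaining}. Write $s=\sum_{k=1}^m \chi_{E_k}z_k$ with the $E_k$ pairwise disjoint and of finite measure. Since $\mu(A)>0$, there is some set $A_k:=A\cap E_k$, or $A_0:=A\setminus\bigcup_k E_k$, of positive measure; call it $A'$. On $A'$ the function $s$ is a constant vector $z$, possibly $z=0$, and $A'$ contains no atoms. Replacing $A$ by $A'$ from the start, normalizing its measure, and applying Lemma \ref{lema:Non-ConstantFunctionGeneralCase}, I get $g$ and $u$ supported in $A'$.

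Set $v=s+\delta u$ with $\delta>0$ small, so that $\norm{s-v}=\delta\norm{u}\leq\delta<\varepsilon$. Suppose $v$ were norm-attaining. By iii), the values $v(\omega)=z+\delta g(\omega)e_1+\delta(1-g(\omega))e_2$ for $\omega\in A'$ would then lie in $\bigcup_n\spann\{y_n\}$ for almost every $\omega$. Consequently some line $\spann\{y_n\}$ contains $v(\omega)$ for all $\omega$ in a set $B\subseteq A'$ of positive measure.

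To reach a contradiction, consider the map $t\mapsto z+\delta t e_1+\delta(1-t)e_2$, which parametrizes an affine line $L$ in $Y$. Two cases arise.

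\emph{If $L$ does not pass through $0$,} then a line through the origin meets $L$ in at most one point, because $e_1-e_2\neq 0$. Hence $g$ would be constant on $B$, contradicting the remark after Lemma \ref{lema:Non-ConstantFunctionGeneralCase}.

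\emph{If $L$ passes through $0$,} the argument above fails, and this is the case that needs care. Here I would adjust the choice of $\delta$. The condition $0\in L$ means $z\in\spann\{e_1-e_2\}+\delta e_2$, that is, $z\in \delta e_2+\K(e_1-e_2)$. If this held for two distinct values $\delta\neq\delta'$, subtracting would give $(\delta-\delta')e_2\in\K(e_1-e_2)$, which is impossible since $e_1,e_2$ are non-colinear. So at most one value of $\delta$ is bad, and it can be avoided.

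The only non-routine point is therefore handling the affine-line-through-origin case by this choice of $\delta$; everything else follows from Theorem \ref{thm:SuppDisjNormAttaining} and the nowhere-constancy of $g$.
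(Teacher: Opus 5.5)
Your proof is correct. Its core contradiction is the same as the paper's: Theorem~\ref{thm:SuppDisjNormAttaining}~iii) forces the values on an atomless set into countably many lines, and the nowhere-constancy of $g$ rules this out.

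The difference is in how the perturbation is built. The paper does not add anything to $s$. It overwrites $s$ on an atomless set $A$ of small measure, taking $u'=s|_{\Omega\setminus A}+g\otimes e_1+(\chi_A-g)\otimes e_2$, and bounds $\norm{u'-s}\leq(1+\max_n\norm{y_n})\mu(A)$ by shrinking $\mu(A)$. On $A$ the values of $u'$ lie on the affine line $e_2+\K(e_1-e_2)$, which never contains $0$ when $e_1,e_2$ are non-colinear. So the paper never meets your Case~2 and needs no localization to a piece where $s$ is constant. Your additive perturbation $s+\delta u$ leaves $s$ untouched and controls the error through $\delta$ rather than through $\mu(A)$. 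The price is the extra case analysis and excluding the single bad value of $\delta$. The paper's price is only the easy fact that an atomless set has atomless subsets of arbitrarily small positive measure, which follows e.g. from Lemma~\ref{lema:Non-ConstantFunctionGeneralCase}. On balance the paper's route is slightly cleaner, but yours is equally valid.

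Two harmless slips:
\begin{itemize}
\item The condition $0\in L$ actually reads $z\in-\delta e_2+\K(e_1-e_2)$. The sign does not affect your uniqueness argument for the bad $\delta$.
\item With $g$ rescaled to take values in $[0,1]$, you only get $\norm{u}\leq\mu(A')$, not $\norm{u}\leq 1$. Simply choose $0<\delta<\varepsilon/\mu(A')$, avoiding the one bad value.
\end{itemize}
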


\begin{proof}
    Let $u\in L_1(\mu)\pten Y=L_1(\mu,Y)$ and $\varepsilon>0$. Using that simple functions are dense in $L_1(\mu,Y)$, we can find some $s=\sum_{n=1}^N \chi_{E_n} \otimes y_n\in L_1(\mu,Y)$ such that $\norm{u-s}<\frac{\varepsilon}{2}$.
    Since $\mu$ is not purely atomic, there is some  measurable set $A$ with $0<\mu(A)<\infty$ and satisfying that $A$ contains no atoms. Take such an $A$ with $\mu(A)<\frac{\varepsilon}{2(1+\max\{\norm{y_n}: 1\leq n\leq N\})}$ and consider $g:\Omega \rightarrow [0,\mu(A)]$ the function associated with $A$, given by Lemma \ref{lema:Non-ConstantFunctionGeneralCase}. Replacing $g$ by $g\chi_A$, we may assume that $g|_{\Omega\setminus A}=0$. Then, pick $e_1,e_2\in S_Y$ non-colinear vectors and define the tensor $u'=s|_{\Omega\setminus A} + g\otimes e_1+(\chi_A-g)\otimes e_2\in L_1(\mu)\pten Y$. First, observe that
    \begin{align*}
        \norm{u'-s}&=\int_{A} \norm{s(\omega)-(g(\omega)e_1+(1-g(\omega))e_2)} \text{ d}\mu \\&\leq (1+\max\{\norm{y_n} : 1\leq n\leq N\})\mu(A)<\frac{\varepsilon}{2}.
    \end{align*}
    Furthermore, let us show that $u'$ does not attain its norm. Otherwise, there should be a representation $u'=\sum_{i=1}^\infty f_i\otimes z_i$ with $\mu(\supp f_i \cap \supp f_j)=0$ for all $i\neq j$, by Theorem \ref{thm:SuppDisjNormAttaining}. Hence, we could find some $i\in \N$ such that $\mu(\supp f_i \cap A)>0$. Therefore,
    \begin{equation*}
        g(\omega)e_1+(1-g(\omega))e_2=u'(\omega)=f_i(\omega)z_i, \quad \text{ for } \omega\in \supp f_i \cap A, \ \mu-\text{a.e.},
    \end{equation*}
    from which we would obtain a contradiction as we did in the proof of Theorem \ref{thm:L1(mu)PurelyAtomic}. Thus, $u'\notin \NA_\pi (L_1(\mu)\pten Y)$ and $\norm{u-u'}<\varepsilon$.
\end{proof}

The following remark highlights the existence of tensors that do not admit optimal representations in their projective tensor product, but do admit them in a larger projective tensor product, into which the former embeds as a closed subspace.
\begin{rema}\label{rema:l1strictlyconvex}
    Recall that $\ell_1$ contains a strictly convex two-dimensional subspace $Y$ (see  below \cite[Theorem 5]{Lindenstrauss64}). 
Therefore, by Proposition \ref{prop:NAComplementaryDense}, if $\mu$ is not purely atomic,  the set of non-norm-attaining tensors $L_1(\mu)\pten Y \setminus \NA_\pi(L_1(\mu)\pten Y)$ is dense in $L_1(\mu)\pten Y$. 
However, we have $\NA_\pi (L_1(\mu) \ptensor \ell_1) = L_1(\mu) \ptensor \ell_1$ 
(\cite[Proposition 3.6]{DJRRZ}), so for every $u\notin L_1(\mu)\pten Y \setminus \NA_\pi (L_1(\mu)\pten Y)$, we know that $u$ admits an optimal representation in the bigger space $L_1(\mu) \ptensor \ell_1$. 
Moreover, $L_1(\mu) \ptensor Y$ is (isometrically) a subspace of $L_1(\mu) \ptensor \ell_1$, so such a representation must necessarily include at least one elementary tensor 
$x \otimes y$ with $y \in \ell_1 \setminus Y$.

Furthermore, the example above shows that the assumption in \cite[Lemma 3.1]{GGR} 
cannot be weakened to only require that $X \ptensor Z$ is (isometrically) a subspace of $X \ptensor Y$.
\end{rema}

We can say a bit more about the descriptive complexity of the set $\NA_\pi(L_1(\mu)\pten Y)$. The next result shows that it is not a $G_\delta$ set. This is related to the question of whether $\NA_\pi(X\pten Y)$ may be residual, asked by A. Rueda Zoca at the conference \emph{Lluís Santaló School 2023}. 

\begin{prop}\label{prop:NAisnotGdelta} Let $(\Omega,\Sigma, \mu)$ be 
a measure space and let $Y$ be a strictly convex Banach space with $\dim(Y)\geq 2$. If $\mu$ is not purely atomic, then $\NA_\pi(L_1(\mu)\pten Y)$ is not a $G_\delta$ set. 
\end{prop}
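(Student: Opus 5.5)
Since $L_1(\mu)\pten Y$ is a complete metric space, we argue by Baire category. By Proposition 2.? (density, via Remark \ref{Rema:DisjointSuppNA}), $\NA_\pi(L_1(\mu)\pten Y)$ is dense. Suppose it were a $G_\delta$. Then it would be a dense $G_\delta$, hence residual. To reach a contradiction we exhibit another dense $G_\delta$ set disjoint from it: the intersection of two dense $G_\delta$ sets is again dense, so in particular nonempty, which is impossible.

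The natural candidate for the second set comes from the complement. Proposition \ref{prop:NAComplementaryDense} shows the complement is dense, but density alone is not enough; we need a dense $G_\delta$ inside the complement. By Theorem \ref{thm:SuppDisjNormAttaining}, $u\in\NA_\pi$ if and only if $u$ takes values, a.e., in countably many lines through the origin. So instead we show that $\NA_\pi$ is contained in a countable union of closed nowhere dense sets. This is equivalent to saying $\NA_\pi$ is meager, which contradicts being a dense $G_\delta$.

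First, reduce to the atomless piece. Fix $A$ with $0<\mu(A)<\infty$ containing no atoms. The restriction map $R\colon u\mapsto u\chi_A$ is a continuous linear surjection onto $L_1(\mu|_A,Y)$ with a norm-one section. Also, $u\in\NA_\pi$ implies $R u\in\NA_\pi$, by criterion iii) of Theorem \ref{thm:SuppDisjNormAttaining}. Since an open surjection pulls back meager sets to meager sets (here $R$ is even a projection), it suffices to show $\NA_\pi(L_1(\mu|_A)\pten Y)$ is meager in $L_1(\mu|_A,Y)$.

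Now write the norm-attaining set as a countable union via a quantitative version of iii). For $u\in\NA_\pi$ there are lines $\ell_1,\ell_2,\dots$ carrying almost all of $u$. Hence for each $\varepsilon=1/k$ some finite $n$ satisfies
\[
\int_{A}\operatorname{dist}\Bigl(u(\omega),\bigcup_{i\le n}\ell_i\Bigr)\,d\mu=0 \quad\text{on a set of mass }\ge \|u\|-1/k.
\]
To obtain closed sets, I would define, for $n,k\in\N$,
\[
F_{n,k}=\Bigl\{u:\ \exists\, y_1,\dots,y_n\in B_Y \text{ with } \int_A \min\Bigl(\min_{i\le n}\operatorname{dist}(u(\omega),\spann\{y_i\}),\,\|u(\omega)\|\Bigr)d\mu \le \|u\|\bigl(1-\tfrac1k\bigr)\Bigr\}.
\]
The expected main obstacle is making such sets closed and their union cover $\NA_\pi$, because the $y_i$ range over a non-compact ball. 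To handle this, I would restrict to a separable $Y$ first; $u$ is essentially separably valued, and one can work in the closed subspace $L_1(\mu|_A,Y_0)$ for separable $Y_0$, or, more cleanly, use a countable dense set of directions together with a slack parameter.

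The nowhere density of each piece is the core step, and it reuses the construction from Proposition \ref{prop:NAComplementaryDense}. Near any $u$, take a simple function $s$ and modify it on many small atomless pieces $A_1,\dots,A_m$ of $A$. On each $A_j$, put $g_j e_1+(\chi_{A_j}-g_j)e_2$ scaled appropriately, with $g_j$ the nowhere constant function from Lemma \ref{lema:Non-ConstantFunctionGeneralCase}. The values of this modification spread continuously over a curve of directions. Any $n$ lines can then capture only a small proportion of the norm, uniformly, and this persists in a small ball around the modified function. The quantitative estimate is the delicate point: I would use strict convexity to bound below the distance from $te_1+(1-t)e_2$ to a fixed line outside a short $t$-interval, via compactness of the segment in the plane $\spann\{e_1,e_2\}$. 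This yields an open set inside the complement of $F_{n,k}$ near every point, completing the argument.
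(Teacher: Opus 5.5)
\textbf{There is a genuine gap: the sets you use to witness meagerness are not nowhere dense.} Your outer logic is sound. $\NA_\pi(L_1(\mu)\pten Y)$ is dense, so if it were also meager it could not be a $G_\delta$, since a dense $G_\delta$ is residual. The reduction to an atomless piece $A$ via $u\mapsto u\chi_A$ is also fine.

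\textbf{Why $F_{n,k}$ fails.} Normalize $\mu(A)=1$, take $u_0=\chi_A\otimes e_1$ with $e_1\in S_Y$, and let $v$ satisfy $\norm{v-u_0}<1/3$. Choosing $y_1=e_1$ gives, for every $n\ge 1$ and $k\ge 2$,
\[
\int_A\min_{i\le n}\operatorname{dist}\bigl(v(\omega),\spann\{y_i\}\bigr)\,d\mu\le\int_A\norm{v(\omega)-e_1}\,d\mu<\tfrac13<\norm{v}\bigl(1-\tfrac1k\bigr).
\]
So the whole ball $B(u_0,1/3)$ lies in $F_{n,k}$. For $k=1$ the condition forces $u$ to take values a.e.\ in $n$ lines, so $\bigcup_n F_{n,1}$ misses norm-attaining elements with infinitely many directions, such as the one in Remark \ref{rema:FNA}.

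The same thing happens around every nonzero simple function, and it would happen for any variant of the form ``$n$ lines carry $u$ up to a small error''. Such approximate conditions survive small perturbations and hold near the dense set of simple functions, so they can never give nowhere dense sets.

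\textbf{Why the perturbation step fails.} For the same reason, your nowhere-density step cannot work as described. Modifying $s$ only on small pieces, as in Proposition \ref{prop:NAComplementaryDense}, leaves $\tilde u=s$ on most of $A$. Finitely many lines then still carry most of the norm of $\tilde u$, which contradicts the uniform estimate you claim.

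\textbf{How to repair it.} You need an \emph{exact}, quantified feature of norm-attaining elements that perturbation destroys. By Theorem \ref{thm:SuppDisjNormAttaining} iii), every $u\in\NA_\pi(L_1(\mu|_A)\pten Y)$ lies in some
\[
G_m=\bigl\{u:\ \exists\, y\in S_Y,\ \exists\, B\subseteq A \text{ with } \mu(B)\ge 1/m \text{ and } u(\omega)\in\spann\{y\} \text{ a.e. on } B\bigr\}.
\]
To show $G_m$ is nowhere dense, you must spread directions over \emph{all} of $A$. For example, set $\tilde u=s+\eta\,\gamma\circ g_j$ on each level set $E_j$ of $s$ in $A$, where $\gamma$ is a curve of pairwise non-colinear unit vectors in a plane $P$. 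Then prove
\[
\sup_\ell\ \mu\{\omega\in A:\operatorname{dist}(\tilde u(\omega),\ell)<\varepsilon'\}<\tfrac{1}{2m},
\]
with the supremum over all lines $\ell$ through $0$. This is where strict convexity enters, since a line meets $y_j+\eta S_P$ in at most two points. You also need compactness, after reducing arbitrary lines to lines in the finite-dimensional span of the values. This yields $\norm{v-\tilde u}\ge\varepsilon'/(2m)$ for all $v\in G_m$. Carried out fully, this would prove the stronger fact that $\NA_\pi(L_1(\mu)\pten Y)$ is meager.

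\textbf{The paper's route.} The paper avoids all such estimates. It builds a continuous map $\Phi\colon 2^{\mathbb N}\to L_1(\mu)\pten Y$, given by $\Phi(a)=\gamma\circ s_a\circ g$ on $A$. Then $\Phi^{-1}(\NA_\pi)$ is exactly the countable dense set of finitely supported sequences, which is not a $G_\delta$ in the Cantor space.
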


\begin{proof}
    Consider the Cantor set $2^\mathbb N$ endowed with the product topology and the subset $F\subseteq 2^{\mathbb N}$ of finitely supported sequences. Note that $F$ is countable and so it is a meager subset of $2^{\mathbb N}$. We claim that there is a continuous 
    $\Phi\colon 2^\mathbb N\to L_1(\mu)\pten Y$ such that $\Phi^{-1}(\NA_\pi(L_1(\mu)\pten Y))=F$. Since $F$ is meager, this will provide the desired conclusion. 

    Now we prove the claim. To this end, take $A\in \Sigma$ such that $0<\mu(A)<\infty$ and satisfying that $A$ contains no atoms. We may assume that $\mu(A)=1$. Let $g\colon \Omega\to [0,1]$ be the measure-preserving function provided by Lemma \ref{lema:Non-ConstantFunctionGeneralCase}. Take also two non-colinear vectors $y_1,y_2\in S_Y$ and consider the continuous map $\gamma\colon[0,1]\to S_Y$ given by
    \[ \gamma(t)=\frac{(1-t)y_1+ty_2}{\norm{(1-t)y_1+ty_2}}\]
    For $a\in 2^{\mathbb N}$, let $s_a\in L_1([0,1])$ given by 
    \[s_a(t)=\sum_{n=1}^\infty \frac{a_n}{3^n} \varepsilon_n(t)\]    
    where $\varepsilon_n(t)$ denotes the $n$-th binary digit of $t$ (note that this is well-defined up to a null set). Finally, let
    \[u_a(\omega)=\begin{cases} \gamma(s_a(g(\omega))) & \text{if } \omega \in A\\ 0 &\text{otherwise}\end{cases}\]
    Then $u_a\in L_1(\mu)\pten Y$ and we may define $\Phi(a)=u_a$. We will show that $\Phi$
satisfies the required properties.    

First, to check that $\Phi$ is continuous, let $\varepsilon>0$, and take $\delta>0$ such that $\norm{\gamma(t)-\gamma(s)}<\varepsilon$ whenever $|t-s|<\delta$. Let $N$ so that $\sum_{n> N} 3^{-n}<\delta$. If $a_n=b_n$ for $n\leq N$, then
\[\norm{s_a-s_b}_\infty\leq \sum_{n>N}\frac{|a_n-b_n|}{3^n} <\delta \]
and so
\begin{align*}
\norm{u_a-u_b}= \int_A \norm{\gamma(s_a(g(\omega)))-\gamma(s_b(g(\omega)))}d\omega \leq \varepsilon
\end{align*}

It remains to show that $\Phi^{-1}(\NA_\pi(L_1(\mu)\pten Y))=F$. Given $a\in 2^\mathbb N$,  Theorem \ref{thm:SuppDisjNormAttaining} and the fact that   $\norm{u_a(\omega)}=1$ for each $\omega$ show that $u_a\in \NA_\pi(L_1(\mu)\pten Y)$ if and only if $\{\omega: u_a(\omega)\}$ is contained in countable union of one-dimensional subspaces, up to a null set. Since $\operatorname{span}\{\gamma(t)\}\neq \operatorname{span}\{\gamma(s)\}$ for $t\neq s$, it follows that  $u_a\in \NA_\pi(L_1(\mu)\pten Y)$ if and only if there exists $C\subseteq[0,1]$ countable such that
\[ 0 = \mu\{\omega: s_a(g(\omega))\notin C\}= \lambda\{t: s_a(t)\notin C\}\]
Clearly, this holds if and only if $a$ is finitely supported. That ends the proof.
\end{proof}

We finish the section showing that the statement of Theorem \ref{thm:SuppDisjNormAttaining} does not hold in the non-strictly convex case.

\begin{prop}{\label{prop:L1SuppDisjointNormAttaining}}
    Let $(\Omega, \Sigma, \mu)$ be a measure space with $\mu$ not purely atomic and let $Y$ be Banach space with $\dim(Y)\geq 2$. Then,
    $$\NA_{\pi}(L_1(\mu)\ptensor Y) = \left\{ u=\sum_{i=1}^{\infty} f_i\otimes y_i : \mu\left(\supp{f_i}\cap \supp{f_j}\right)=0, \forall i\neq j \right\}$$
    if and only if $Y$ is strictly convex. Otherwise, the inclusion from right to left is strict.
\end{prop}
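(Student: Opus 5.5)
The plan has three parts: note that the inclusion from right to left always holds, get the forward implication from Theorem \ref{thm:SuppDisjNormAttaining}, and for the converse exhibit an extra norm-attaining function whenever $Y$ is not strictly convex.

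The inclusion "$\supseteq$" holds for every $Y$ by Remark \ref{Rema:DisjointSuppNA}. If $Y$ is strictly convex, the equality is exactly the equivalence i) $\Leftrightarrow$ ii) of Theorem \ref{thm:SuppDisjNormAttaining}. It therefore remains to show: if $Y$ is not strictly convex and $\mu$ is not purely atomic, then some $u\in\NA_\pi(L_1(\mu)\pten Y)$ admits no representation with disjointly supported $f_i$.

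Since $Y$ is not strictly convex, there are $e_1\neq e_2$ in $S_Y$ with $\norm{e_1+e_2}=2$. Then the whole segment $[e_1,e_2]$ lies in $S_Y$, and $e_1,e_2$ are non-colinear: $e_2=\theta e_1$ with $|\theta|=1$ and $|1+\theta|=2$ forces $\theta=1$. Pick $A\in\Sigma$ with $0<\mu(A)<\infty$ containing no atoms, and normalize $\mu(A)=1$. Let $g$ be given by Lemma \ref{lema:Non-ConstantFunctionGeneralCase}, replaced by $g\chi_A$. As in the proof of Theorem \ref{thm:L1(mu)PurelyAtomic}, put
\[u=g\otimes e_1+(\chi_A-g)\otimes e_2 .\]
On $A$ the value $u(\omega)=g(\omega)e_1+(1-g(\omega))e_2$ is a convex combination of $e_1,e_2$, so $\norm{u(\omega)}=1=g(\omega)\norm{e_1}+(1-g(\omega))\norm{e_2}$ a.e. on $A$. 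Integrating gives $\norm{u}=\mu(A)=\norm{g}+\norm{\chi_A-g}$, so this two-term representation is optimal and $u\in\NA_\pi(L_1(\mu)\pten Y)$. One could also invoke Lemma \ref{Lema:NormAttainingSubsets} for this step.

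To see that $u$ is not in the right-hand set, I repeat verbatim the argument of Theorem \ref{thm:L1(mu)PurelyAtomic}, which used only the non-colinearity of $e_1,e_2$ and not strict convexity. Suppose $u=\sum_i f_i\otimes y_i$ with disjoint supports. Then on some $\supp f_i$ of positive measure we have $u=f_iy_i$. Writing $y_i=\alpha_1e_1+\alpha_2e_2$ forces $g=\alpha_1/(\alpha_1+\alpha_2)$ on a positive-measure subset of $A$, contradicting the nowhere-constancy of $g$. Hence the inclusion is strict.

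The only genuinely new step, and the one to check carefully, is the choice of the non-colinear pair $e_1,e_2$ spanning a segment on the sphere; it makes $u$ norm-attaining while preserving the obstruction used in Theorem \ref{thm:L1(mu)PurelyAtomic}.
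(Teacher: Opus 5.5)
Your proposal is correct and follows the paper's proof essentially step for step. The paper uses the same function $g\otimes e_1+(\chi_A-g)\otimes e_2$ built from a segment contained in $S_Y$, the same norm computation showing it is norm-attaining, and the same appeal to the argument of Theorem \ref{thm:L1(mu)PurelyAtomic}. Your short derivation that $e_1,e_2$ are non-colinear and span a segment of the sphere fills in a step the paper simply asserts.
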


\begin{proof}
    If $Y$ is strictly convex, the result follows from Theorem \ref{thm:SuppDisjNormAttaining}. Conversely, suppose that $Y$ is not strictly convex. Then, there is some non-colinear vectors $e_1,e_2\in S_Y$ such that $\lambda e_1+(1-\lambda)e_2\in S_Y$, for all $\lambda \in [0,1]$. Since $\mu$ is not purely atomic, we can find some $A\in \Sigma$ with $0<\mu(A)<\infty$ such that $A$ does not contain any atom. Without loss of generality, we can assume that $\mu(A)=1$ and consider $g\colon \Omega \rightarrow [0,1]$ the associated function from Lemma \ref{lema:Non-ConstantFunctionGeneralCase}. Moreover, replacing $g$ by $g\chi_A$, we may assume that $g|_{\Omega\setminus A}=0$. Hence, define $f\colon\Omega \rightarrow Y$ given by
    $$f(\omega)= g(\omega)e_1+(\chi_A(\omega)-g(\omega))e_2, \quad \forall \omega \in \Omega.$$
    Observe that
    \begin{align*}
        \norm{f}=\int_{\Omega} \norm{f(\omega)} \text{ d}\mu = \int_{A} \norm{g(\omega)e_1+(1-g(\omega))e_2} \text{ d}\mu = \int_{A} 1 \text{ d}\mu =1.
    \end{align*}
     Furthermore, \begin{align*}
        \norm{g\otimes e_1}+ \norm{(\chi_A-g)\otimes e_2}&=\int_{\Omega} \norm{g(\omega)e_1} \text{ d}\mu + \int_{\Omega} \norm{(\chi_A(\omega)-g(\omega))e_2} \text{ d}\mu\\ &= \norm{e_1}\int_{A} g(\omega) \text{ d}\mu + \norm{e_2}\int_{A} (1-g(\omega)) \text{ d}\mu\\& = \int_{A} (g(\omega)+1-g(\omega) )\text{ d}\mu=1
    \end{align*}
    Thus, $f\in \NA_{\pi} (L_1(\mu)\ptensor Y)$. Finally, if $f= \sum_{i=1}^{\infty} f_i \otimes y_i$ with $\mu(\supp{f_i}\cap \supp{f_j})=0$, for $i\neq j$, we would get the same contradiction as the one obtained in the proof of Theorem \ref{thm:L1(mu)PurelyAtomic}.
\end{proof}

\section{Norm-attainment in \texorpdfstring{$L_1(\mu)\pten L_1(\nu)$}{L1(mu)xL1(nu)}}\label{sect:L1L1}

In this section, we will study the set of norm-attaining tensors $\NA_\pi(L_1(\mu)\pten L_1(\nu))$ for measures $\mu$ and $\nu$. In most of the results of the section, we consider $\sigma$-finite measures, except in Theorem \ref{thm:L1L1}, which is stated for arbitrary measures. The reason for this restriction is that the product measure is not uniquely determined when the underlying measures are not $\sigma$-finite. In that case,  moreover, fundamental results such as Fubini’s theorem may fail to hold (see Notes and comments from \cite[252]{Fremlin}). 

Observe that, we have an isometric identification $L_1(\mu)\pten L_1(\nu)=L_1(\mu\otimes \nu)$, where $\mu\otimes \nu$ represents the product measure (e.g. \cite[253F and 253L]{Fremlin}). This identification yields a new way of computing the projective norm, which will be instrumental in establishing the main results of this section.

Recall first that for a non purely atomic measure $\mu$ it is known that 
\[ \NA_\pi(L_1(\mu)\pten L_1(\mu)) \neq L_1(\mu)\pten L_1(\mu)\]
This follows from \cite[Proposition 3.10]{DJRRZ} and the fact that the set of norm-attaining bilinear forms on $L_1(\mu)\times L_1(\mu)$ is not dense (this was proven in \cite{Choi} for the Lebesgue measure and extended in \cite{saleh1, saleh2} to non purely atomic measures). Here we will describe the elements in $\NA_\pi(L_1(\mu)\pten L_1(\nu))$ and show explicitly that there exist non-norm-attaining tensors when $\mu$ and $\nu$ are not purely atomic.  

We start with several noteworthy lemmata that will be used later to establish our main results. The first one provides a characterization of the optimal representations in the space $L_1(\mu)\pten L_1(\nu)$.
We denote by $\arg(\cdot)$ the\textit{ principal argument} from $\K\setminus\{0\}$ into $(-\pi, \pi]$.

\begin{lema}\label{lema:NAL1L1}
Let $(\Omega_1, \Sigma_1,\mu)$ and $(\Omega_2, \Sigma_2, \nu)$ be $\sigma$-finite measure spaces and $f\in L_1(\mu)\pten L_1(\nu)$. A representation $f=\sum_{i=1}^\infty f_i\otimes g_i $ is optimal if and only if, for all $i,j\in \N$, we have
$$ \arg(f_i(\omega)g_i(\gamma))=\arg(f_j(\omega)g_j(\gamma)),$$ for $(\omega,\gamma)\in (\supp f_i\times \supp g_i)\cap(\supp f_j\times \supp g_j),\ (\mu\otimes\nu)-$a.e.
\end{lema}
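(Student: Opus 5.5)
Under the identification $L_1(\mu)\pten L_1(\nu)=L_1(\mu\otimes\nu)$, an elementary tensor $f_i\otimes g_i$ becomes the function $h_i(\omega,\gamma)=f_i(\omega)g_i(\gamma)$. Moreover $\norm{f_i\otimes g_i}=\norm{f_i}\norm{g_i}=\int|h_i|\,d(\mu\otimes\nu)$ by Fubini/Tonelli, which is where $\sigma$-finiteness enters. Optimality of the representation then reads
\[
\int \Big|\sum_{i} h_i\Big|\,d(\mu\otimes\nu)=\sum_i\int |h_i|\,d(\mu\otimes\nu).
\]
The series $\sum_i h_i$ converges absolutely in $L_1(\mu\otimes\nu)$, so $\sum_i |h_i|$ is integrable and a subsequence of the partial sums converges a.e. Hence $\sum_i h_i(\omega,\gamma)$ converges absolutely for a.e. $(\omega,\gamma)$ and equals $f(\omega,\gamma)$ there. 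This reduces the statement to a pointwise equality-in-the-triangle-inequality question for scalar series.

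For the forward implication we use Corollary \ref{coro:NormAttainingSumFactors}. Optimality gives $\norm{h_i+h_j}=\norm{h_i}+\norm{h_j}$ for each pair $i\neq j$. We then apply Lemma \ref{Lema:NormAttainingSubsets} with $Y=\K$ and $L_1(\mu\otimes\nu)$ in place of $L_1(\mu)$ (the lemma is just the pointwise triangle inequality, valid for any measure space). This yields $|h_i+h_j|=|h_i|+|h_j|$ a.e. Equality $|a+b|=|a|+|b|$ for nonzero scalars $a,b$ holds exactly when $\arg a=\arg b$. On $(\supp f_i\times\supp g_i)\cap(\supp f_j\times\supp g_j)$ both $h_i$ and $h_j$ are nonzero, so there $\arg h_i=\arg h_j$ a.e., which is the stated condition.

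For the converse, fix a common full-measure set on which all pairwise argument equalities hold; this is possible because there are countably many pairs. At any such point $(\omega,\gamma)$, all the nonzero terms $h_i(\omega,\gamma)$ have the same argument $\theta$. We may also assume the series converges absolutely there. Then $\sum_i h_i(\omega,\gamma)=e^{i\theta}\sum_i|h_i(\omega,\gamma)|$, so $|f(\omega,\gamma)|=\sum_i|h_i(\omega,\gamma)|$ a.e. Integrating and using monotone convergence together with Tonelli gives
\[
\norm{f}=\sum_i\int|h_i|\,d(\mu\otimes\nu)=\sum_i\norm{f_i}\norm{g_i},
\]
which is optimality.

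The computations are routine. The main points to handle carefully are, first, justifying that the $L_1$-sum coincides a.e. with the pointwise sum, and second, the measure-theoretic bookkeeping for the product measure, namely that $\supp(f_i\otimes g_i)=\supp f_i\times\supp g_i$ up to a $(\mu\otimes\nu)$-null set. Both rely on $\sigma$-finiteness so that the product measure and Fubini–Tonelli behave as expected.
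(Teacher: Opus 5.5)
Your proof is correct and follows essentially the same route as the paper. For the forward direction, both use Corollary \ref{coro:NormAttainingSumFactors} to reduce to pairs and then get equality in the pointwise triangle inequality a.e. (the paper cites Fubini, you cite Lemma \ref{Lema:NormAttainingSubsets} with $Y=\K$); for the converse, both use monotone convergence. Your handling of a.e. absolute convergence of the pointwise series and of the common full-measure set makes explicit details the paper leaves implicit.
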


\begin{proof}
    The implication from right to left follows from the monotone convergence theorem and the fact that if $a_1,\ldots, a_n\in \K\setminus\{0\}$ satisfy $\arg(a_i)=\arg(a_j)$, for all $i, j\in \{1,\ldots,n\}$, then $\sum_{i=1}^n|a_i| = \left|\sum_{i=1}^n a_i \right|$.
    
    Conversely, suppose that $f\in \NA_{\pi}(L_1(\mu)\pten L_1(\nu))$, so there is some representation $f=\sum_{i=1}^\infty f_i\otimes g_i$ such that $\norm{f} = \sum_{i=1}^\infty \norm{f_i}\norm{g_i}$. Using Corollary \ref{coro:NormAttainingSumFactors}, we obtain that $\norm{f_i\otimes g_i + f_j\otimes g_j} = \norm{f_i}\norm{g_i}+\norm{f_j}\norm{g_j}$ for all $i,j\in \N$. By Fubini's theorem \cite[252C and 252R]{Fremlin} it follows that 
    $$\left| f_i(\omega)g_i(\gamma)+f_j(\omega)g_j(\gamma) \right| = |f_i(\omega)g_i(\gamma)| +|f_j(\omega)g_j(\gamma)|, \ (\omega,\gamma)\in \Omega_1\times \Omega_2,\ (\mu\otimes \nu)-\text{a.e.}$$
    for all $i,j\in \N$.
    Finally,  we conclude that for all $i,j\in \N$ we have $\arg(f_i(\omega)g_i(\gamma))=\arg(f_j(\omega)g_j(\gamma))$ for $(\omega,\gamma)\in (\supp f_i\times \supp g_i)\cap(\supp f_j\times \supp g_j)$, $(\mu\otimes\nu)-$a.e.
\end{proof}

Next, we present a necessary condition for an element to be norm-attaining, in terms of a decomposition of its support into a countable union of measurable rectangles.

\begin{lema}\label{lema:normattainingL1L1}
    Let $(\Omega_1, \Sigma_1,\mu)$ and $(\Omega_2, \Sigma_2, \nu)$ be $\sigma$-finite measure spaces. If $f\in \NA_\pi (L_1(\mu)\ptensor L_1(\nu))$, then there is a sequence of measurable rectangles $(A_i\times B_i)_{i=1}^\infty\subseteq \Sigma_1\times \Sigma_2$ such that
    $$(\mu\otimes \nu)\left(\supp{f}\Delta\left(\bigcup_{i=1}^\infty A_i\times B_i\right)\right)=0.$$
    Furthermore, if $f$ is real-valued, we may choose the sets $A_i$ and $B_i$ so that
      \\$\sign{f}|_{A_i\times B_i}$ is constant for each $i\in\natu$.
\end{lema}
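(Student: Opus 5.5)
Take an optimal representation $f=\sum_{i=1}^\infty f_i\otimes g_i$, which exists because $f\in \NA_\pi(L_1(\mu)\pten L_1(\nu))$, and discard the terms with $f_i=0$ or $g_i=0$. Set $A_i=\supp f_i$ and $B_i=\supp g_i$. Under the identification $L_1(\mu)\pten L_1(\nu)=L_1(\mu\otimes\nu)$ we have
\[ f(\omega,\gamma)=\sum_{i=1}^\infty f_i(\omega)g_i(\gamma), \qquad (\mu\otimes\nu)\text{-a.e.}, \]
where the series converges absolutely a.e. This follows because, by Fubini and monotone convergence, $\int\sum_i|f_i(\omega)g_i(\gamma)|\,d(\mu\otimes\nu)=\sum_i\norm{f_i}\norm{g_i}<\infty$.

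The main claim is that $\supp f=\bigcup_i A_i\times B_i$ up to a null set. The inclusion $\supp f\subseteq\bigcup_i A_i\times B_i$ is clear, since if $(\omega,\gamma)$ lies in no rectangle then every term vanishes. For the reverse inclusion, use optimality via Corollary \ref{coro:NormAttainingSumFactors} and Lemma \ref{Lema:NormAttainingSubsets}, applied in $L_1(\mu\otimes\nu)$ with $Y=\K$. They give, for every finite $F\subseteq\N$,
\[ \Big|\sum_{i\in F}f_i(\omega)g_i(\gamma)\Big|=\sum_{i\in F}|f_i(\omega)g_i(\gamma)| \quad \text{a.e.} \]
Intersecting countably many full-measure sets and letting $F$ increase to $\N$, we get $|f(\omega,\gamma)|=\sum_i|f_i(\omega)g_i(\gamma)|$ a.e. Alternatively, Lemma \ref{lema:NAL1L1} says all nonzero terms share a common argument a.e., so no cancellation occurs. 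Hence on $A_i\times B_i$, where $f_i(\omega)g_i(\gamma)\neq0$, we get $|f|>0$ a.e. This proves the first assertion.

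For the real-valued case, refine each rectangle by splitting according to signs. Set $A_i^{\pm}=\{\omega: \pm f_i(\omega)>0\}$ and $B_i^{\pm}=\{\gamma:\pm g_i(\gamma)>0\}$. Then $A_i\times B_i$ is the disjoint union of the four rectangles $A_i^{s}\times B_i^{t}$, and on each of them the term $f_i\otimes g_i$ has constant sign $st$. By the no-cancellation identity, $f$ a.e. has the same sign as every nonzero term. So $\sign f$ equals $st$ a.e. on $A_i^{s}\times B_i^{t}$, and we can modify the rectangles by null sets, or simply note that the statement is modulo null sets. Re-enumerating the countably many rectangles yields the required sequence.

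The only delicate step is passing from the pairwise or finite equality of moduli to the a.e. pointwise statement for the infinite sum. This means handling countably many exceptional null sets and the a.e. absolute convergence of the series to $f$. Both are routine via Fubini/Tonelli on the $\sigma$-finite product, and that is exactly where the $\sigma$-finiteness hypothesis is used.
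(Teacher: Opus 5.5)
Your proposal is correct and follows the paper's proof: starting from an optimal representation, you use the no-cancellation property to identify $\supp f$ with $\bigcup_i \supp f_i\times\supp g_i$ up to a null set. The paper cites the common-argument condition of Lemma~\ref{lema:NAL1L1} for this, whereas you derive the equivalent identity $|f|=\sum_i|f_ig_i|$ a.e.\ from Corollary~\ref{coro:NormAttainingSumFactors} and Lemma~\ref{Lema:NormAttainingSubsets}; in the real case you then split each rectangle into the same four sign rectangles $\{\pm f_i>0\}\times\{\pm g_i>0\}$. You mainly make explicit what the paper calls ``easy to see'', and, as in the paper, the sign-splitting step tacitly uses that the $f_i,g_i$ are real-valued, which is automatic when $\K=\R$.
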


\begin{proof}
    For the first part of the proof, pick $f \in \NA_\pi (L_1(\mu)\ptensor L_1(\nu))$. Thanks to Lemma \ref{lema:NAL1L1}, there is a representation $f=\sum_{i=1}^\infty f_i\otimes g_i$ such that $$\arg(f_i(\omega)g_i(\gamma))=\arg(f_j(\omega)g_j(\gamma))$$ for $(\omega,\gamma)\in (\supp f_i\times \supp g_i)\cap(\supp f_j\times \supp g_j), \ (\mu\otimes\nu)-$a.e., and for all  $i,j\in \N$. Hence, it is easy to see that  $$(\mu\otimes\nu)\left(\supp{f}\Delta\left(\bigcup_{i=1}^\infty \supp{f_i}\times \supp{g_i}\right)\right)=0$$ and $\arg{f}|_{\supp{f_i}\times \supp{g_i}}= \arg{f_i\otimes g_i}$, for all $i\in \N$. 
    
    For the second part of the proof (assuming that $f$ is real-valued), we just need to adjust the measurable rectangles to satisfy the additional condition. Certainly, we can define, for each $i\in\natu$, the following measurable sets
    \begin{align*}
        C_i &=\{ \omega\in \Omega_1 : f_i(\omega)>0\}
        \\ D_i &=\{ \omega\in \Omega_1 : f_i(\omega)<0\}
        \\ E_i &=\{ \gamma\in \Omega_2 : g_i(\gamma)>0\}
        \\ F_i &=\{ \gamma\in \Omega_2 : g_i(\gamma)<0\}
    \end{align*}
    and it is then clear that $\bigcup_{i=1}^\infty \left((C_i\times E_i)\cup(C_i\times F_i)\cup(D_i\times E_i)\cup (D_i\times F_i)\right)$ coincides with $\supp f$, up to a null set. Furthermore, in each of those rectangles, $\sign f$ is obviously constant. 
\end{proof}

We are going to provide a characterization of norm-attainment for non-negative functions. However, the following lemma shows that we may easily extend this characterization to any real-valued function.

\begin{lema}\label{lema:urealsiiu+u-}
     Let $(\Omega_1, \Sigma_1,\mu)$, and $(\Omega_2, \Sigma_2, \nu)$ be $\sigma$-finite measure spaces, and let $f\in L_1(\mu\otimes \nu)$ a real-valued function. Then, $$f\in \NA_\pi(L_1(\mu)\pten L_1(\nu)) \Longleftrightarrow \{f^+, f^-\}\subseteq \NA_\pi(L_1(\mu)\pten L_1(\nu)),$$
     where $f^+$ and $f^-$ denote the usual positive and negative parts of $f$. 
\end{lema}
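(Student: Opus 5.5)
The easy direction ($\Leftarrow$) I would get from Lemma \ref{lema:sumadeNAesNA}. Write $f=f^+ + (-f^-)$. Both $f^+$ and $-f^-$ lie in $\NA_\pi(L_1(\mu)\pten L_1(\nu))$, since norm-attainment is clearly invariant under multiplication by scalars. They have disjoint supports in $\Omega_1\times\Omega_2$, so
\[
\norm{f}_{L_1(\mu\otimes\nu)}=\norm{f^+}+\norm{f^-}.
\]
Lemma \ref{lema:sumadeNAesNA} then gives $f\in\NA_\pi$.

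For ($\Rightarrow$), I would start from an optimal representation $f=\sum_{i=1}^\infty f_i\otimes g_i$. As in the proof of Lemma \ref{lema:normattainingL1L1}, I split each $f_i=f_i^+-f_i^-$ and $g_i=g_i^+-g_i^-$. This gives
\[
f_i\otimes g_i=\bigl(f_i^+\otimes g_i^+ + f_i^-\otimes g_i^-\bigr)-\bigl(f_i^+\otimes g_i^- + f_i^-\otimes g_i^+\bigr),
\]
where the first bracket is the part of $f_i\otimes g_i$ where the product is positive and the second the part where it is negative. Moreover $\norm{f_i}\norm{g_i}$ equals the sum of the four norms $\norm{f_i^{\pm}}\norm{g_i^{\pm}}$, because all four products $f_i^\pm\otimes g_i^\pm$ are nonnegative with disjoint supports. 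So the new countable family of elementary tensors is still an optimal representation of $f$. Lemma \ref{lema:NAL1L1} then applies to this family: the signs of the terms agree a.e.\ on overlaps of their supports. In particular no positive term overlaps a negative term in a set of positive measure.

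Next I identify $f^+$ and $f^-$. Let $P$ be the sum of the positive terms and $N$ the sum of the negative ones (with sign removed), so $f=P-N$ with $P,N\ge0$. These sums converge absolutely in $L_1(\mu\otimes\nu)$ since their total norm is bounded by $\sum_i\norm{f_i}\norm{g_i}<\infty$. By the disjointness just obtained, $P$ and $N$ have a.e.\ disjoint supports, hence $P=f^+$ and $N=f^-$ a.e. Finally, each of $P$ and $N$ is a sum of nonnegative elementary tensors $h_k\otimes k_k$. For such a sum, $\norm{\sum_k h_k\otimes k_k}=\sum_k\norm{h_k}\norm{k_k}$ by monotone convergence (or Fubini), so that representation is optimal. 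Therefore $f^\pm\in\NA_\pi$.

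The step needing the most care is the disjointness claim. Lemma \ref{lema:NAL1L1} is stated for the original $f_i$, $g_i$, so I must either re-justify optimality of the refined representation or deduce the sign agreement directly. It follows directly from Lemma \ref{lema:NAL1L1} applied to the original representation: on $(\supp f_i\times\supp g_i)\cap(\supp f_j\times\supp g_j)$ the signs of $f_i(\omega)g_i(\gamma)$ and $f_j(\omega)g_j(\gamma)$ coincide a.e. That is exactly the statement that positive and negative parts from different indices do not overlap. The norm identity for the refined family then also follows, since it is now a sum of elementary tensors of constant sign with pairwise sign-compatible supports.
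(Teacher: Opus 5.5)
Your proof is correct, but you build the representation of $f^+$ differently from the paper.

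\textbf{The paper's route.} It invokes Lemma \ref{lema:normattainingL1L1} to cover $\supp f^+$ by countably many sign-constant rectangles and disjointifies them into $(A_i\times B_i)_i$. It then restricts every term of the optimal representation to every rectangle, writing $f^+=\sum_j\sum_i f_j\chi_{A_i}\otimes g_j\chi_{B_i}$. Optimality is checked by Fubini together with the a.e.\ identity $\sum_j|f_j||g_j|=|f|$, which comes from optimality of the original representation.

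\textbf{Your route.} You keep the four pieces $f_i^\pm\otimes g_i^\pm$ and sort them by sign. Lemma \ref{lema:NAL1L1} rules out overlap of positive and negative pieces on a non-null set, so the positive sum is exactly $f^+$. Optimality is then automatic because every piece is nonnegative.

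\textbf{Comparison.} The idea underneath is the same: the rectangles $C_i\times E_i$, $D_i\times F_i$, etc.\ in the proof of Lemma \ref{lema:normattainingL1L1} are precisely the supports of your pieces.
\begin{itemize}
\item Your version avoids the disjointification and the double series, and treats $f^+$ and $f^-$ at once.
\item It also yields the explicit optimal representation $f^+=\sum_i\bigl(f_i^+\otimes g_i^+ + f_i^-\otimes g_i^-\bigr)$ in terms of the original factors.
\item The paper's version has the merit of isolating the rectangle structure of $\supp f$ as a separate statement.
\end{itemize}

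\textbf{Minor remarks.} The worry in your last paragraph is unnecessary. Lemma \ref{lema:NAL1L1} characterizes optimality of any representation, and you had already shown the refined family is optimal.

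Like the paper, you tacitly take $f_i,g_i$ real-valued. For $\K=\Complex$, first rotate each term by a unimodular constant. This is possible because Lemma \ref{lema:NAL1L1} forces $f_i(\omega)g_i(\gamma)$ to be real a.e.\ on $\supp f_i\times\supp g_i$ when $f$ is real.
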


\begin{proof}
    If $f^+$ and $f^-$ attain their norm, then $f=f^+-f^-$ is also norm-attaining by Lemma \ref{lema:sumadeNAesNA}. Conversely, suppose that $f\in \NA_\pi(L_1(\mu)\pten L_1(\nu))$. We prove that $f^+$ attains its norm, and an analogous argument shows that $f^-$ also attains its norm. Lemma \ref{lema:normattainingL1L1} implies that there are measurable rectangles $(A_i\times B_i)_{i=1}^\infty \subseteq\Sigma_1\times \Sigma_2$ such that $(\mu\otimes\nu)\left(\supp f^+\Delta\left( \bigcup_{i=1}^\infty A_i\times B_i\right)\right)=0$. Up to taking a pairwise disjoint version of those rectangles (observe that the complement of a measurable rectangle and the finite intersection of measurable rectangles can always be expressed as finite unions of measurable rectangles), we may assume that they are so. Furthermore, let $f=\sum_{j=1}^\infty f_j\otimes g_j$ be an optimal representation of $f$. Then, it is clear that
    \begin{align*}
        f^+=f\chi_{\supp f^+} =\sum_{j=1}^\infty \sum_{i=1}^\infty f_j\chi_{A_i}\otimes g_j\chi_{B_i}.
    \end{align*}
    Moreover,
    \begin{align*}
        \sum_{j=1}^\infty \sum_{i=1}^\infty\norm{f_j\chi_{A_i}}\norm{g_j\chi_{B_i}} &= \sum_{j=1}^\infty \sum_{i=1}^\infty \left(\int_{A_i}|f_j(\omega)| \text{ d}\mu\right)\left(\int_{B_i}|g_j(\gamma)| \text{ d}\nu\right) \\&= \sum_{j=1}^\infty \sum_{i=1}^\infty \int_{A_i\times B_i}  |f_j(\omega)||g_j(\gamma)| \text{ d}(\mu\otimes \nu) \\&= \sum_{j=1}^\infty \int_{\supp f^+} |f_j(\omega)||g_j(\gamma)| \text{ d}(\mu\otimes \nu) \\&= \int_{\supp f^+} \sum_{j=1}^\infty |f_j(\omega)||g_j(\gamma)| \text{ d}(\mu\otimes \nu) \\&= \int_{\supp f^+} |f(\omega,\gamma)| \text{ d}(\mu\otimes \nu) = \norm{f^+},
    \end{align*}
    so $f^{+}\in \NA_\pi(L_1(\mu)\pten L_1(\nu))$.
\end{proof}

We now present one of the main results of this section, which provides two different characterizations of norm-attainment for non-negative functions (see also Corollary \ref{cor:caracterizacionrealvalued} for real-valued functions).

\begin{thm}\label{thm:L1L1characterization}
    Let $(\Omega_1, \Sigma_1,\mu)$, and $(\Omega_2, \Sigma_2, \nu)$ be $\sigma$-finite measure spaces, and let $f\in L_1(\mu\otimes \nu)$ with $f\geq 0$. Then, the following assertions are equivalent:
    \begin{itemize}
        \item[i)] $f\in \NA_\pi(L_1(\mu)\pten L_1(\nu))$.
        \item[ii)] $f=\sum_{i=1}^\infty c_i (\chi_{A_i}\otimes \chi_{B_i})$, for some $(c_i)_{i=1}^\infty\subseteq [0,\infty)$ and some measurable rectangles $(A_i\times B_i)_{i=1}^\infty \subseteq \Sigma_1\times \Sigma_2$.
        \item[iii)] For each $c\geq 0$, the level set $\{(\omega,\gamma)\in \Omega_1\times \Omega_2 : f(\omega,\gamma)>c\}$ coincides with a countable union of measurable rectangles, up to a $(\mu\otimes \nu)$-null set.
    \end{itemize}
\end{thm}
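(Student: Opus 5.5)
I will prove the cycle i) $\Rightarrow$ iii) $\Rightarrow$ ii) $\Rightarrow$ i). The implication ii) $\Rightarrow$ i) is immediate. If $f=\sum_i c_i\,\chi_{A_i}\otimes\chi_{B_i}$ with $c_i\geq 0$, then every term is non-negative on its support, so the argument condition of Lemma \ref{lema:NAL1L1} holds trivially. Equivalently, by monotone convergence,
\[\norm{f}=\int f\,\mathrm d(\mu\otimes\nu)=\sum_i c_i\mu(A_i)\nu(B_i)=\sum_i \norm{c_i\chi_{A_i}}\norm{\chi_{B_i}}.\]
One small point is that the series converges in $L_1(\mu\otimes\nu)$, which holds because $\sum_i c_i\mu(A_i)\nu(B_i)=\norm f<\infty$.

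For iii) $\Rightarrow$ ii) I will use the standard layer-cake decomposition of a non-negative measurable function into countably many indicators. Write $f=\sum_{n=1}^\infty 2^{-n}\chi_{E_n}$ with $E_n$ defined recursively: $E_n=\{f> 2^{-n}+\sum_{k<n}2^{-k}\chi_{E_k}\}$. Since these sets are not directly level sets of $f$, I prefer the dyadic-rational variant $f=\int_0^\infty \chi_{\{f>c\}}\,dc$, discretized via the countable family of level sets $\{f>q\}$ with $q$ a non-negative dyadic rational. Concretely, for each $m$ let $\phi_m=2^{-m}\sum_{k\geq 1}\chi_{\{f>k2^{-m}\}}$. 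Then $\phi_m$ increases to $f$, and $\phi_{m+1}-\phi_m$ is a countable non-negative combination of indicators of sets of the form $\{f>k2^{-m-1}\}$. Summing the telescoping series $\phi_1+\sum_m(\phi_{m+1}-\phi_m)$ expresses $f$ as $\sum_j d_j\chi_{L_j}$, with $d_j\geq0$ and each $L_j$ a level set $\{f>c_j\}$. By iii), each $L_j$ equals a countable union of rectangles up to a null set, and this union can be made disjoint, as in the proof of Lemma \ref{lema:urealsiiu+u-}. Hence $\chi_{L_j}=\sum_k \chi_{A_{jk}}\otimes\chi_{B_{jk}}$ a.e., and reindexing gives ii). The convergence claims reduce to monotone convergence.

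The main step is i) $\Rightarrow$ iii). Take an optimal representation $f=\sum_i f_i\otimes g_i$. By Lemma \ref{lema:NAL1L1}, on each $\supp f_i\times\supp g_i$ the product $f_i(\omega)g_i(\gamma)$ has the same argument as $f$, which is $0$ since $f\geq 0$. Replacing $f_i,g_i$ by $|f_i|,|g_i|$ (after splitting signs or phases as in Lemma \ref{lema:normattainingL1L1}, on the rectangles $C_i\times E_i$, etc., where the product is positive; products on the other rectangles vanish a.e. anyway), I may assume $f_i,g_i\geq0$ and $f=\sum_i f_i\otimes g_i$ pointwise a.e. as a sum of non-negative terms. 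Then each $f_i\otimes g_i$ is a product of non-negative functions, and each such term is already of type ii): write $f_i=\sum_k a_k\chi_{A_k}$ and $g_i=\sum_l b_l\chi_{B_l}$ by the one-variable layer-cake decomposition, so that $f_i\otimes g_i=\sum_{k,l}a_kb_l\chi_{A_k\times B_l}$ with non-negative coefficients. This gives ii) directly. The remaining implication ii) $\Rightarrow$ iii) needs care, since $\{f>c\}$ for a countable non-negative sum of rectangle indicators is not obviously a countable union of rectangles. I expect this to be the obstacle. My plan is to show $\{f>c\}=\bigcup_{F}\bigcap_{i\in F}(A_i\times B_i)$ over finite sets $F\subseteq\N$ with $\sum_{i\in F}c_i>c$. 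Each finite intersection of rectangles is a rectangle, and there are countably many $F$. The identity holds pointwise because at a point where the series exceeds $c$, some finite partial sum of the active terms already exceeds $c$; conversely, each such intersection is contained in $\{f>c\}$ since all terms are non-negative. So the cycle closes as i) $\Rightarrow$ ii) $\Rightarrow$ iii) $\Rightarrow$ ii) $\Rightarrow$ i).
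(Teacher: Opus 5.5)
Your implications i) $\Rightarrow$ ii), ii) $\Rightarrow$ iii) and ii) $\Rightarrow$ i) are fine. The finite-intersection identity $\{f>c\}=\bigcup_F\bigcap_{i\in F}(A_i\times B_i)$ is a clean alternative to the paper's use of the finite $\sigma$-algebra generated by $A_1\times B_1,\dots,A_n\times B_n$. The gap is in iii) $\Rightarrow$ ii), and your route from iii) back to i) depends on it. Your claim that $\phi_{m+1}-\phi_m$ is a non-negative combination of level-set indicators is false. A direct computation gives
\[\phi_{m+1}-\phi_m=2^{-m-1}\sum_{k\geq1}\chi_{\{(2k-1)2^{-m-1}<f\leq k2^{-m}\}},\]
which is a combination of indicators of bands $\{f>a\}\setminus\{f>b\}$. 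Under iii), a band is a difference of two countable unions of rectangles, and such a difference need not be a countable union of rectangles. In the paper's Example, $E^c$ is open (hence a countable union of rectangles) and $[0,1]^2$ is a rectangle, but $E=[0,1]^2\setminus E^c$ is not a countable union of rectangles.

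Choosing the coefficients differently cannot rescue your stated goal $f=\sum_j d_j\chi_{\{f>c_j\}}$ with $d_j\geq0$. Any such series equals $F\circ f$, where $F(t)=\sum_{c_j<t}d_j$. Take $f(\omega,\gamma)=\omega$ on $[0,1]^2$, which satisfies i)--iii). Then you would need $F(t)=t$ for a.e. $t\in[0,1]$, and by monotonicity for every $t\in(0,1)$. That makes $F$ continuous on $(0,1)$, which forces $d_j=0$ whenever $c_j\in(0,1)$. So $F$ is constant on $(0,1)$, a contradiction.

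The missing idea is to take differences only between finite unions of rectangles, never between the countable unions supplied by iii). The paper does this as follows.
\begin{itemize}
\item Enumerate $\mathbb{Q}_{\geq0}=\{q_i\}$ and choose rectangles $R_{i,j}$ with $\{f>q_i\}=\bigcup_j R_{i,j}$ a.e.
\item Set $s_n=\max_{1\leq i,j\leq n}q_i\chi_{R_{i,j}}$. Then $s_n\uparrow f$ a.e.
\item Each $s_{n+1}-s_n\geq0$ is a simple function measurable for the finite algebra generated by $\{R_{i,j}:i,j\leq n+1\}$.
\item The atoms of that algebra are finite disjoint unions of rectangles, because complements and finite intersections of rectangles are finite unions of rectangles.
\end{itemize}
Hence each $s_{n+1}-s_n$ is a finite non-negative combination of rectangle indicators, and summing over $n$ gives ii). The paper instead shows that each $s_{n+1}-s_n$ is norm-attaining and concludes i) directly from Lemma \ref{lema:sumadeNAesNA}.
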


\begin{proof}
    i) $\Rightarrow$ ii) If $f$ attains its norm, there are sequences $(f_i)_{i=1}^\infty \subseteq L_1(\mu)$ and $(g_i)_{i=1}^\infty \subseteq L_1(\nu)$, such that $f=\sum_{i=1}^\infty f_i\otimes g_i$ and $\norm{f}=\sum_{i=1}^\infty \norm{f_i}\norm{g_i}$. Since $f\geq 0$, Lemma \ref{lema:NAL1L1} yields $f_i(\omega)g_i(\gamma)=|f_i(\omega)| |g_i(\gamma)|$, $(\mu\otimes\nu)-$a.e., for all $i\in \N$. Hence, it follows that $f=\sum_{i=1}^\infty |f_i|\otimes |g_i|$, so we may assume that $f_i\geq0$ and $g_i\geq 0$, for all $i\in \N$. Therefore, for each $i\in \N$, we can express the previous functions as
    $$f_i=\sum_{j=1}^\infty \alpha_{i,j} \chi_{A_{i,j}} \quad \text{and} \quad g_i=\sum_{k=1}^\infty \beta_{i,k}\chi_{B_{i,k}},$$
    for some $(\alpha_{i,j})_{j=1}^\infty, (\beta_{i,k})_{k=1}^\infty \subseteq[0,\infty)$, $(A_{i,j})_{j=1}^\infty \subseteq \Sigma_1$, and $(B_{i,k})_{k=1}^\infty \subseteq\Sigma_2$
    (observe that we just have to consider, for each one, an increasing sequence of non-negative simple functions $(s_n)_n$ and express the limit function as $\sum_{n\in \N} (s_{n+1}-s_n)$). Thus, we obtain 
    \begin{align*}
        f&=\sum_{i=1}^\infty f_i\otimes g_i = \sum_{i=1}^\infty \left(\sum_{j=1}^\infty\alpha_{i,j} \chi_{A_{i,j}}  \right)\otimes\left(\sum_{k=1}^\infty \beta_{i,k}\chi_{B_{i,k}}\right) \\&=\sum_{i=1}^\infty \sum_{j=1}^\infty \sum_{k=1}^\infty \alpha_{i,j}\beta_{i,k}(\chi_{A_{i,j}}\otimes \chi_{B_{i,k}}),
    \end{align*}
    and by rearranging the indices, the conclusion follows.

    ii) $\Rightarrow$ iii) Fix $c\geq 0$, and observe that since the sequence $(c_i)_{i=1}^\infty$ is non-negative we have that
    $$\{(\omega,\gamma)\in \Omega_1\times \Omega_2 : f(\omega,\gamma)>c\} = \bigcup_{n=1}^\infty \{(\omega,\gamma)\in \Omega_1\times \Omega_2 : s_n(\omega,\gamma)>c\},$$
    where $s_n\coloneqq \sum_{i=1}^n c_i (\chi_{A_i}\otimes \chi_{B_i})$, for all $n\in \N$. Hence, it suffices to show that given $n\in \N$, the set $\{(\omega,\gamma)\in \Omega_1\times \Omega_2 : s_n(\omega,\gamma)>c\}$ is a countable union of measurable rectangles up to a null set. However, this is clear because $s_n$ is measurable with respect to the $\sigma$-algebra generated by the family $\{ A_i\times B_i : 1\leq i \leq n\}$, and every element from this $\sigma$-algebra is a finite union of measurable rectangles.  

    iii) $\Rightarrow$ i) We first prove it  for characteristic functions, secondly for non-negative simple functions, and finally for any $f\geq 0$.
    
    If $f=\chi_E$ and it satisfies iii) then $E$ is a countable union of measurable rectangles, up to a $(\mu\otimes \nu)$-null set. Let $(A_i\times B_i)_{i=1}^\infty\subseteq\Sigma_1\times\Sigma_2$ be a pairwise disjoint version of those rectangles. Therefore, they satisfy that $\mu(E\Delta \bigcup_{i=1}^\infty (A_i\times B_i))=0$. Finally, $f=\chi_E= \sum_{i=1}^\infty \chi_{A_i}\otimes \chi_{B_i}$ and this representation is optimal thanks to Lemma \ref{lema:NAL1L1}.
    
    Suppose now that $f$ is a non-negative simple function and it satisfies iii). Let $\lambda_1,\ldots, \lambda_n$ be the finite number of values (different from 0)  that $f$ takes, and assume without loss of generality that $0<\lambda_1<\lambda_2 <\cdots <\lambda_n.$ Define the sets $E_1=\supp f$ and
    $E_i\coloneqq \{(\omega, \gamma) : f(\omega,\gamma)>\lambda_{i-1}\}$ for all  $i\in \{2,\ldots,n\}.$ From above we know that every $\chi_{E_i}$ attains its norm and so by Lemma \ref{lema:sumadeNAesNA}, we have that $$f=\lambda_1\chi_{E_1}+(\lambda_2-\lambda_1)\chi_{E_2}+\cdots+ (\lambda_n-\lambda_{n-1})\chi_{E_n}$$ also attains its norm.
    
    Finally, let $f\in L_1(\mu\otimes\nu)$ with $f\geq 0$ and satisfying iii). Consider an enumeration $(q_i)_{i=1}^\infty$ of $\mathbb{Q}_{\geq 0}$, and find for each $i\in \N$, a sequence of measurable rectangles $(R_{i,j})_{j=1}^\infty\subseteq \Sigma_1\times \Sigma_2$ such that $\{(\omega,\gamma)\in \Omega_1\times\Omega_2 : f(\omega,\gamma)>q_i\}$ coincides with $\bigcup_{j=1}^\infty R_{i,j}$, up to a $(\mu\otimes \nu)$-null set. Define for each $n\in \N$, the non-negative simple function $s_n\in L_1(\mu\otimes\nu)$, given by
    $$s_n(\omega,\gamma)\coloneqq \max_{1\leq i,j\leq n} q_i \chi_{R_{i,j}}(\omega,\gamma), \quad (\omega,\gamma)\in \Omega_1\times \Omega_2, \ (\mu\otimes \nu)-\text{a.e.}$$
    It is easy to see that $(s_n)_{n=1}^\infty$ is non-decreasing and converges pointwise to $f$. Thus, considering $r_n\coloneqq s_{n+1}-s_n$ and by the monotone convergence theorem, it follows that $f=\sum_{n=1}^\infty r_n$. Furthermore, given $n\in \N$, observe that $r_n$ is measurable with respect to the $\sigma$-algebra generated by $\{ R_{i,j} : 1\leq i,j \leq n+1\}$, and every element from this $\sigma$-algebra is a finite union of measurable rectangles. Thus, the set $\{ (\omega,\gamma) \in \Omega_1\times \Omega_2 : r_n(\omega,\gamma)>c\}$ is a finite union of measurable rectangles, for each $c\geq 0$. Hence, $r_n$ attains its norm and by Lemma \ref{lema:sumadeNAesNA}, we have that $f$ does so.
\end{proof}

Observe that with the aid of Lemma \ref{lema:urealsiiu+u-}, we can completely characterize norm-attaining real-valued functions as a straightforward consequence of Theorem \ref{thm:L1L1characterization}.

\begin{coro}\label{cor:caracterizacionrealvalued}
      Let $(\Omega_1, \Sigma_1,\mu)$, and $(\Omega_2, \Sigma_2, \nu)$ be $\sigma$-finite measure spaces, and let $f\in L_1(\mu\otimes \nu)$ be a real-valued function. Then, the following assertions are equivalent. 
    \begin{enumerate}
        \item[i)] $f\in \NA_\pi(L_1(\mu)\pten L_1(\nu))$.
        \item[ii)] $f=\sum_{i=1}^\infty c_i (\chi_{A_i}\otimes \chi_{B_i})-\sum_{j=1}^\infty d_j(\chi_{C_j}\otimes \chi_{D_j})$, for some $(c_i)_{i=1}^\infty, (d_j)_{j=1}^\infty\subseteq [0,\infty)$ and some measurable rectangles $(A_i\times B_i)_{i=1}^\infty, (C_j\times D_j)_{j=1}^\infty \subseteq \Sigma_1\times \Sigma_2$, satisfying the additional condition $(\mu\otimes \nu)((A_i\times B_i) \cap (C_j\times D_j))=0$, whenever $c_i d_j\neq 0$.
        \item[iii)] For each $c\geq 0$, both of the sets $\{(\omega,\gamma)\in \Omega_1\times \Omega_2 : f(\omega,\gamma)>c\}$ and $\{(\omega,\gamma)\in \Omega_1\times \Omega_2 : f(\omega,\gamma)<-c\}$ coincide with a countable union of measurable rectangles, up to a $(\mu\otimes \nu)$-null set.
    \end{enumerate}
\end{coro}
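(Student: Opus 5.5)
The plan is to reduce everything to the non-negative case, using Lemma \ref{lema:urealsiiu+u-} and Theorem \ref{thm:L1L1characterization} applied separately to $f^+$ and $f^-$. The key observation is that for $c\geq 0$ we have $\{f>c\}=\{f^+>c\}$ and $\{f<-c\}=\{f^->c\}$. Hence condition iii) for $f$ is exactly condition iii) of Theorem \ref{thm:L1L1characterization} for both $f^+$ and $f^-$.

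For i) $\Leftrightarrow$ iii), I combine two facts. First, by Lemma \ref{lema:urealsiiu+u-}, $f\in \NA_\pi$ if and only if both $f^+$ and $f^-$ lie in $\NA_\pi$. Second, Theorem \ref{thm:L1L1characterization} (equivalence i) $\Leftrightarrow$ iii)) says each of $f^\pm$ lies in $\NA_\pi$ if and only if its level sets $\{f^\pm>c\}$ are countable unions of measurable rectangles up to null sets. The observation above then translates this into condition iii) for $f$.

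For i) $\Rightarrow$ ii), I apply Theorem \ref{thm:L1L1characterization} i) $\Rightarrow$ ii) to $f^+$ and to $f^-$ separately. This gives
\[ f^+=\sum_i c_i\,\chi_{A_i}\otimes\chi_{B_i}, \qquad f^-=\sum_j d_j\,\chi_{C_j}\otimes\chi_{D_j}, \]
with $c_i,d_j\geq 0$. The extra disjointness condition should come for free. If $c_i\neq 0$, then $f^+\geq c_i>0$ a.e. on $A_i\times B_i$, since all the terms are non-negative, so $A_i\times B_i\subseteq \supp f^+$ up to a null set. Likewise $C_j\times D_j\subseteq \supp f^-$ whenever $d_j\neq 0$. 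Since $\supp f^+\cap\supp f^-=\emptyset$, these rectangles meet in a null set whenever $c_id_j\neq 0$.

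For ii) $\Rightarrow$ i), set $P=\sum_i c_i\chi_{A_i\times B_i}$ and $N=\sum_j d_j\chi_{C_j\times D_j}$. The disjointness condition gives $\mu\otimes\nu(\supp P\cap \supp N)=0$, because $\supp P$ is the union of the rectangles with $c_i\neq 0$ and similarly for $N$. So $P=f^+$ and $N=f^-$, both of which are in $\NA_\pi$ by Theorem \ref{thm:L1L1characterization}. Then Lemma \ref{lema:urealsiiu+u-} (or Lemma \ref{lema:sumadeNAesNA}, using $\|f\|=\|f^+\|+\|f^-\|$) gives $f\in\NA_\pi$.

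The only delicate step is this identification of supports. One has to check that the series $P$ and $N$ converge to a.e.-finite functions in $L_1$ and that their supports are exactly the unions of the relevant rectangles. Both follow from the fact that the terms are non-negative, together with monotone convergence. Everything else is bookkeeping.
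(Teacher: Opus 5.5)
Your proposal is correct and follows the paper's own route: the paper obtains this corollary directly by applying Lemma \ref{lema:urealsiiu+u-} and Theorem \ref{thm:L1L1characterization} to $f^+$ and $f^-$, using $\{f>c\}=\{f^+>c\}$ and $\{f<-c\}=\{f^->c\}$ for $c\geq 0$. You also spell out details the paper calls straightforward: the disjointness condition in ii), and the identifications $P=f^+$, $N=f^-$ a.e. One small correction there: the finiteness and integrability of $P$ and $N$ come from disjointness together with $f\in L_1$ (so that $P+N=|f|$ a.e.), and not from non-negativity alone.
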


\begin{rema}
    Observe that given $f,g\in L_1(\mu\otimes\nu)$ real-valued functions, if $f$ and $g$ are norm-attaining, we also have that $\max\{f,g\}$ and $\min\{f,g\}$ attain their norm. This follows from Corollary \ref{cor:caracterizacionrealvalued} and the fact that the family of countable unions of measurable rectangles is stable under finite unions and intersections (we can express the level sets of $\max\{f,g\}$ and $\min\{f,g\}$ in terms of the ones of $f$ and $g$). Thus, in the real setting, we conclude that $\NA_\pi(L_1(\mu)\pten L_1(\nu))$ is a sublattice of $L_1(\mu\otimes\nu)$.
\end{rema}

For later use in Theorem \ref{thm:L1L1}, we state the following immediate corollary from Theorem \ref{thm:L1L1characterization} regarding the particular case of characteristic functions. It should be compared with \cite[Corollary~2.4]{saleh2}.

\begin{coro}\label{coro:equivalencecharacteristicfunctions} Let $(\Omega_1, \Sigma_1,\mu)$, and $(\Omega_2, \Sigma_2, \nu)$ be $\sigma$-finite measure spaces and $E\subseteq \Omega_1\times \Omega_2$ be a $(\mu\otimes\nu)$-measurable set with $(\mu\otimes \nu)(E)<\infty$.  The following statements are equivalent:
\begin{itemize}
    \item[i)] $\chi_E\in \NA_\pi(L_1(\mu)\pten L_1(\nu))$,
    \item[ii)] there is a sequence of measurable rectangles $(A_i\times B_i)_{i=1}^\infty\subseteq \Sigma_1\times \Sigma_2$ such that $(\mu\otimes\nu)(E\Delta(\bigcup_{i=1}^\infty A_i\times B_i))=0$. 
\end{itemize}
\end{coro}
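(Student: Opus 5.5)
The plan is to derive the statement directly from Theorem \ref{thm:L1L1characterization} applied to the non-negative function $f=\chi_E$. Since $E$ has finite measure, $\chi_E\in L_1(\mu\otimes\nu)$ and $\chi_E\geq 0$, so the theorem applies. The key observation is that the level sets of $\chi_E$ are very simple:
\[
\{\chi_E>c\}=E \quad \text{for } 0\leq c<1, \qquad \{\chi_E>c\}=\emptyset \quad \text{for } c\geq 1 .
\]

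For ii) $\Rightarrow$ i), assume $E$ coincides up to a null set with $\bigcup_i A_i\times B_i$. Then condition iii) of Theorem \ref{thm:L1L1characterization} holds for every $c\geq 0$. For $c<1$ the level set is $E$, which is a countable union of rectangles by hypothesis. For $c\geq 1$ the level set is empty, which is trivially such a union (take empty rectangles). Hence $\chi_E$ attains its norm. Alternatively, one can repeat the first step of the proof of iii) $\Rightarrow$ i) there: make the rectangles pairwise disjoint and invoke Lemma \ref{lema:NAL1L1}.

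For i) $\Rightarrow$ ii), assume $\chi_E$ is norm-attaining. Theorem \ref{thm:L1L1characterization} iii) with $c=0$ gives that $\{\chi_E>0\}=E$ is, up to a null set, a countable union of measurable rectangles, which is exactly ii). Lemma \ref{lema:normattainingL1L1} gives the same conclusion directly, since $\supp\chi_E=E$.

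There is essentially no obstacle. The only point to check is the measurability convention: $E$ is $(\mu\otimes\nu)$-measurable, possibly in the completed $\sigma$-algebra, so I would note that all sets are taken modulo null sets. This is exactly how the paper identifies $L_1(\mu\otimes\nu)$ with $L_1(\mu)\pten L_1(\nu)$.
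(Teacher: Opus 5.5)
Your proposal is correct and follows exactly the route the paper intends: the paper states this corollary as an immediate consequence of Theorem~\ref{thm:L1L1characterization}, using that the level sets of $\chi_E$ are either $E$ or $\emptyset$. Your alternative remarks are also valid and mirror steps already in the proof of that theorem, namely disjointifying the rectangles and applying Lemma~\ref{lema:NAL1L1}, or applying Lemma~\ref{lema:normattainingL1L1} with $\supp\chi_E=E$.
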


Finally, we will use the above characterization to prove that there are non-norm-attaining elements in $L_1(\mu)\pten L_1(\nu)$ whenever $\mu$ and $\nu$ are both non purely atomic. Before proceeding, we need an auxiliary lemma that should be compared with \cite[Lemma 2]{Choi} and \cite[Lemma 2.1]{saleh1}.

\begin{lema}\label{lema:setwithoutrectangles}
    Let $(\Omega_1, \Sigma_1,\mu)$ and $(\Omega_2,\Sigma_2,\nu)$ be finite atomless measure spaces. Then, there is some $(\mu\otimes\nu)$-measurable set $E$ with $(\mu\otimes\nu)(E)>0$, such that $$(\mu\otimes\nu)((A\times B)\cap E)<(\mu\otimes \nu)(A\times B), \quad \forall A\times B\in \Sigma_1\times \Sigma_2 \text{ with } \mu(A)\nu(B)>0.$$
\end{lema}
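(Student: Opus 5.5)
We reduce to the unit square with Lebesgue measure and pull back a known set from there. Normalize so that $\mu(\Omega_1)=\nu(\Omega_2)=1$.

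\textbf{Step 1: transfer to $[0,1]$.} Apply Lemma \ref{lema:Non-ConstantFunctionFiniteNonAtomicCase} to get measurable $g_1\colon\Omega_1\to[0,1]$ and $g_2\colon\Omega_2\to[0,1]$ with $\mu\circ g_1^{-1}=\lambda=\nu\circ g_2^{-1}$. Set $G=g_1\times g_2$, so that $(\mu\otimes\nu)\circ G^{-1}=\lambda\otimes\lambda$ on $[0,1]^2$. This holds on rectangles and extends by uniqueness of the product measure.

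\textbf{Step 2: a set in $[0,1]^2$.} We need $F\subseteq[0,1]^2$ with $(\lambda\otimes\lambda)(F)>0$ and
\[
(\lambda\otimes\lambda)\bigl((C\times D)\setminus F\bigr)>0
\]
for every rectangle with $\lambda(C)\lambda(D)>0$. Take $F=\{(s,t): s-t\in K\}$, where $K\subseteq[-1,1]$ is a measurable set such that both $K$ and its complement meet every interval in positive measure.

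To check this, suppose $(C\times D)\setminus F$ were null, i.e.\ $\chi_C*\chi_{-D}$ vanishes a.e.\ off $K$. The function $\chi_C*\chi_{-D}$ is continuous and nonnegative, and it is not identically $0$ since its integral is $\lambda(C)\lambda(D)>0$. So it is positive on some open interval. That interval meets the complement of $K$ in positive measure, which gives a contradiction.

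\textbf{Step 3: pull back.} Let $E=G^{-1}(F)$. Then $(\mu\otimes\nu)(E)=(\lambda\otimes\lambda)(F)>0$. We must show that $(\mu\otimes\nu)((A\times B)\setminus E)>0$ whenever $\mu(A)\nu(B)>0$.

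This is the main obstacle, because $A$ need not be of the form $g_1^{-1}(C)$. To handle it, push forward: define measures $\alpha(C)=\mu(A\cap g_1^{-1}(C))$ and $\beta(D)=\nu(B\cap g_2^{-1}(D))$ on $[0,1]$. Both are absolutely continuous with respect to $\lambda$, with densities $a,b\ge 0$ that are nonzero with positive integral. Then
\[
(\mu\otimes\nu)((A\times B)\setminus E)=\int_{F^c} a(s)\,b(t)\,d(\lambda\otimes\lambda).
\]
The convolution $a*\tilde b$, where $\tilde b(t)=b(-t)$, is continuous because $a,b\in L_1$ are bounded by $1$, hence in $L_2$. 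It is nonnegative with positive integral, so it is positive on an interval. The right-hand side equals $\int_{K^c}a*\tilde b\,d\lambda>0$, exactly as in Step 2.
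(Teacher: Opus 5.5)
Your proposal is correct and follows the paper's proof. It uses the same normalization, the same measure-preserving map $G=(g_1,g_2)$, a pulled-back difference set, and the same push-forward densities $a,b$ to handle rectangles $A\times B$ not of the form $G^{-1}(C\times D)$; the identity $(\mu\otimes\nu)((A\times B)\setminus E)=\int_{F^c}a(s)b(t)\,d(\lambda\otimes\lambda)$, which you state without proof, is exactly the claim the paper verifies by a $\pi$-$\lambda$ argument. The differences are minor. You prove the planar property directly via continuity of $\chi_C*\chi_{-D}$, where the paper cites Choi's Lemma 2 for $\{|x-y|\in\Delta\}$. In the last step you rerun the convolution argument on $a*\tilde b$ (correctly using $a,b\le 1$), where the paper instead picks $a\ge\varepsilon\chi_C$, $b\ge\varepsilon\chi_D$ and reduces to the rectangle case.
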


\begin{proof}
    Assume that $\mu(\Omega_1)=1=\nu(\Omega_2)$ and consider $g_\mu: \Omega_1\rightarrow [0,1]$ and $g_\nu: \Omega_2\rightarrow [0,1]$ the associated measure-preserving maps from Lemma \ref{lema:Non-ConstantFunctionFiniteNonAtomicCase}. Thanks to \cite[251L]{Fremlin}, we have that the map $T: \Omega_1\times \Omega_2\rightarrow [0,1]^2$ given by $T=(g_\mu, g_\nu)$ is also measure-preserving. That is, for every $(\mu\otimes\nu)$-measurable set $F$, we have
    $$(\mu\otimes \nu)\left(T^{-1}(F)\right)=\lambda_2(F), $$
    where $\lambda_2$ is the Lebesgue measure in $[0,1]^2$. Invoking \cite[Lemma 2]{Choi}, the $\lambda_2$-measurable set
    $S=\{(x,y)\in [0,1]^2: |x-y|\in \Delta\}$
    (where $\Delta$ denotes any Cantor-type set of positive measure) satisfies that $\lambda_2(S)=\lambda(\Delta)>0$, and $$\lambda_2((C\times D)\cap S)<\lambda_2(C\times D), \quad \forall \ C\times D\subseteq [0,1]^2 \text{ with } \lambda_2(C\times D)>0.$$ Define the $(\mu\otimes\nu)$-measurable set $E\coloneqq T^{-1}(S)$. It is clear that $(\mu\otimes\nu)(E)=\lambda_2(S)>0$. 
    Let $A\in \Sigma_1$ and $B\in \Sigma_2$ with $\mu(A)\nu(B)>0$ and we are going to prove that $(\mu\otimes\nu)((A\times B)\setminus E)>0$. Consider the following measures $\widetilde{\mu}$ and $\widetilde{\nu}$ in $[0,1]$ given by
    \begin{align*}
        \widetilde{\mu}(R)&\coloneqq \mu\left(A\cap g_\mu^{-1}(R)\right),
        \\ \widetilde{\nu}(R)&\coloneqq \nu\left(B\cap g_\nu^{-1}(R)\right),
    \end{align*}
    for all $\lambda$-measurable $R\subseteq[0,1]$. The measure-preserving condition on $g_\mu$ and $g_\nu$ ensures that $\widetilde{\mu}$ and $\widetilde{\nu}$ are absolutely continuous with respect to $\lambda$. Denote by $f$ and $h$ the Radon-Nikodým derivatives of $\widetilde{\mu}$ and $\widetilde{\nu}$ respectively. They are clearly non-zero, because so are $\widetilde{\mu}$ and $\widetilde{\nu}$ (notice that $\widetilde{\mu}([0,1])=\mu(A)>0$ and $\widetilde{\nu}([0,1])=\nu(B)>0$). Hence, we can find $\varepsilon>0$ and some $\lambda$-measurable sets $C,D \subseteq[0,1]$ with $f\geq \varepsilon\chi_{C}$ and $h\geq \varepsilon\chi_{D}$. We claim that 
    \begin{equation*}
        (\mu\otimes\nu)\left((A\times B)\cap T^{-1}(F) \right)=\int_F f(\omega)h(\gamma) \text{ d}\lambda_2,\quad \text{ for all $\lambda_2$-measurable } F \subseteq [0,1]^2.
    \end{equation*}
    First, if $F$ is of the form $R_1\times R_2$, then
    \begin{align*}
        (\mu\otimes\nu)\left((A\times B)\cap T^{-1}(F) \right)&=(\mu\otimes\nu)\left( \left(A\cap g_\mu^{-1}(R_1)\right) \times \left(B\cap g_\nu^{-1}(R_2)\right) \right)\\&=\mu\left(A\cap g_\mu^{-1}(R_1)\right)\nu\left(B\cap g_\nu^{-1}(R_2)\right)=\widetilde{\mu}(R_1)\widetilde{\nu}(R_2)\\&=\left(\int_{R_1} f(\omega) \text{ d}\lambda \right)\left(\int_{R_2} h(\gamma) \text{ d}\lambda \right) =\int_F f(\omega)h(\gamma) \text{ d}\lambda_2.
    \end{align*}
    Furthermore, it is easy to see that the sets verifying the claim form a Dynkin system (or $\lambda$-system), and clearly the set of measurable rectangles form a $\pi$-system. Therefore, we derive the conclusion from the Dynkin's $\pi$-$\lambda$ theorem (see e.g. \cite[136B]{Fremlin}). Finally, we have
    \begin{align*}
        (\mu\otimes \nu)\left((A\times B)\setminus E \right) &= (\mu\otimes \nu) \left( (A\times B)\cap T^{-1}([0,1]^2\setminus S)\right) \\&=\int_{[0,1]^2\setminus S} f(\omega)h(\gamma) \text{ d}\lambda_2 \geq \varepsilon^2\lambda_2((C\times D)\setminus S)>0,
    \end{align*}
    finishing the proof.
\end{proof}

The following example presents two similarly constructed elements that might a priori be expected to behave analogously. However, we show that one attains the norm, whereas the other fails to do so.

\begin{ejem}
    Consider the space $L_1([0,1]^2)$, and, as in the proof of Lemma \ref{lema:setwithoutrectangles}, let $E=\{(x,y)\in [0,1]^2 : |x-y|\in \Delta\}$, where $\Delta$ is any Cantor-type set of positive measure. It is clear that $E$ is closed, so its complementary $E^c$ is open in $[0,1]^2$, and then it is a countable union of measurable rectangles. Therefore, we obtain as a consequence of Theorem \ref{thm:L1L1characterization}, the following contrasting examples:
    \begin{enumerate}
        \item $f=2\chi_{E}+\chi_{E^c}\notin \NA_\pi(L_1(\mu)\pten L_1(\nu))$, because $\{(\omega,\gamma) \in [0,1]^2 : f(\omega,\gamma)>1\} = E$ is not a countable union of measurable rectangles (Lemma \ref{lema:setwithoutrectangles} or \cite[Lemma 2]{Choi}). 
        \item $f=\chi_{E}+2\chi_{E^c}\in \NA_\pi(L_1(\mu)\pten L_1(\nu))$, because $\{(\omega,\gamma) : f(\omega,\gamma)>c\}\in \{ \emptyset, E^c, [0,1]^2\}$, for all $c\geq0$, so they are all of them countable unions of measurable rectangles. Indeed, writing $E^c=\bigcup_{i\in I} A_i\times B_i$, where $I\subseteq\N$ and $(A_i\times B_i)_{i\in I}\subseteq [0,1]^2$ are pairwise disjoint rectangles, we have that the representation $f=\chi_{[0,1]}\otimes \chi_{[0,1]} + \sum_{i\in I} \chi_{A_i}\otimes \chi_{B_i}$ is optimal.
    \end{enumerate}
\end{ejem}

Now, we prove the promised statement of this section. This theorem is stated in full generality for arbitrary measures. When the measures are not $\sigma$-finite, we consider the product measure defined in \cite[251F]{Fremlin}, since it satisfies the desired identification $L_1(\mu)\pten L_1(\nu)=L_1(\mu\otimes \nu)$ \cite[253F and 253L]{Fremlin}.

\begin{thm}\label{thm:L1L1} Let $(\Omega_1,\Sigma_1,\mu)$ and $(\Omega_2,\Sigma_2,\nu)$ be measure spaces. Then the following are equivalent:
\begin{itemize}
    \item[i)] $\mu$ or $\nu$ is a purely atomic measure.
    \item[ii)] $\NA_\pi(L_1(\mu)\pten L_1(\nu))=L_1(\mu)\pten L_1(\nu)$.     
\end{itemize}
\end{thm}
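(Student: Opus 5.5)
For i) $\Rightarrow$ ii) I will reduce to the known case of $\ell_1(\Gamma)$. Projective tensor products are symmetric, so I may assume $\mu$ is purely atomic. As in the proof of Theorem \ref{thm:L1(mu)PurelyAtomic}, if $(E_\gamma)_{\gamma\in\Gamma}$ are the atoms of $\mu$, the map $f\mapsto (f|_{E_\gamma}\mu(E_\gamma))_\gamma$ gives an isometric isomorphism $L_1(\mu)\cong\ell_1(\Gamma)$. It therefore induces an isometry $L_1(\mu)\pten L_1(\nu)\cong \ell_1(\Gamma)\pten L_1(\nu)$ that maps elementary tensors to elementary tensors of the same norm, and so preserves optimal representations. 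By \cite[Proposition 3.6]{DJRRZ}, every tensor in $\ell_1(\Gamma)\pten Y$ attains its norm for any $Y$, which gives ii). This direction needs no $\sigma$-finiteness.

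For ii) $\Rightarrow$ i) I will argue by contraposition. Suppose neither $\mu$ nor $\nu$ is purely atomic. Choose $A\in\Sigma_1$ and $B\in\Sigma_2$ of finite positive measure containing no atoms. Then $\mu|_A$ and $\nu|_B$, viewed on the spaces $(A,\Sigma_1\cap A)$ and $(B,\Sigma_2\cap B)$, are finite atomless measures. Lemma \ref{lema:setwithoutrectangles} provides $E\subseteq A\times B$ with $(\mu\otimes\nu)(E)>0$ that contains, up to null sets, no measurable rectangle of positive measure. The sets $L_1(\mu|_A)$ and $L_1(\nu|_B)$ sit inside $L_1(\mu)$ and $L_1(\nu)$ as $1$-complemented subspaces, through the restriction projections $P_A f=f\chi_A$ and $P_B g=g\chi_B$. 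Since $P_A$ and $P_B$ have norm at most one and fix $\chi_E$, the projective norm of $\chi_E$ is the same in both tensor products. Moreover, if $\chi_E=\sum f_i\otimes g_i$ is optimal in $L_1(\mu)\pten L_1(\nu)$, then applying $P_A\otimes P_B$ gives $\chi_E=\sum P_Af_i\otimes P_Bg_i$. This new sum has total cost at most the old one, hence is also optimal in $L_1(\mu|_A)\pten L_1(\nu|_B)$. So it suffices to show that $\chi_E$ is not norm-attaining in the $\sigma$-finite (indeed finite) space $L_1(\mu|_A)\pten L_1(\nu|_B)=L_1((\mu\otimes\nu)|_{A\times B})$.

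For that, I apply Corollary \ref{coro:equivalencecharacteristicfunctions}. If $\chi_E$ attained its norm, there would be rectangles $A_i\times B_i$ with $E=\bigcup_i A_i\times B_i$ up to a null set. Since $E$ has positive measure, some $A_i\times B_i$ has $\mu(A_i)\nu(B_i)>0$ and satisfies $(\mu\otimes\nu)((A_i\times B_i)\cap E)=(\mu\otimes\nu)(A_i\times B_i)$, which contradicts Lemma \ref{lema:setwithoutrectangles}. Hence $\chi_E\notin\NA_\pi$, and ii) fails.

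The main obstacle is the measure-theoretic bookkeeping when $\mu$ and $\nu$ are not $\sigma$-finite. I have to check two things. First, with the product measure of \cite[251F]{Fremlin}, the restriction of $\mu\otimes\nu$ to $A\times B$ coincides with $\mu|_A\otimes\nu|_B$. Second, the identification $L_1(\mu)\pten L_1(\nu)=L_1(\mu\otimes\nu)$ is compatible with $P_A\otimes P_B$, i.e. it corresponds to multiplication by $\chi_{A\times B}$. Both should follow from the construction of the product measure on rectangles of finite measure, since all sets involved lie inside $A\times B$, which has finite measure.
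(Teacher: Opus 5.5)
Your proposal is correct and follows the paper's proof. The paper calls i)$\Rightarrow$ii) clear; that is exactly your reduction to $\ell_1(\Gamma)$ and \cite[Proposition 3.6]{DJRRZ}. For ii)$\Rightarrow$i) it uses the same atomless sets $A,B$, the set $E$ from Lemma~\ref{lema:setwithoutrectangles}, and Corollary~\ref{coro:equivalencecharacteristicfunctions} in the finite setting, then cites \cite[Lemma 3.1]{GGR} (with the decompositions $L_1(\mu)=L_1(\mu|_A)\oplus_1 L_1(\mu|_{\Omega_1\setminus A})$, $L_1(\nu)=L_1(\nu|_B)\oplus_1 L_1(\nu|_{\Omega_2\setminus B})$) for the transfer step you prove by hand with $P_A\otimes P_B$. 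That transfer works entirely at the level of tensor products, via $(P_A\otimes P_B)\circ(\iota_A\otimes\iota_B)=\mathrm{id}$ with $\iota_A,\iota_B$ the inclusions, so you never need to view $\chi_E$ in $L_1(\mu\otimes\nu)$ for a non-$\sigma$-finite product, and the two compatibility checks you flag as the main obstacle are unnecessary.
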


\begin{proof}
    i) $\Rightarrow$ ii) is clear.

    For ii) $\Rightarrow$ i), suppose that $\mu$ and $\nu$ are both non purely atomic. Hence, there is some measurable rectangle $A\times B\in \Sigma_1\times \Sigma_2$ with $0<\mu(A), \ \nu(B) < \infty$ and satisfying that both $A$ and $B$ do not contain atoms for the measures $\mu$ and $\nu$ respectively. Denote by $g_\mu$ and $g_\nu$ the corresponding measurable functions from Lemma~\ref{lema:Non-ConstantFunctionGeneralCase} associated to the previous sets. It is clear that $\widetilde{\mu}\coloneqq \mu|_A$ and $\widetilde{\nu}\coloneqq \nu|_B$ are finite atomless measures, so we can find, using Lemma \ref{lema:setwithoutrectangles}, some $(\widetilde{\mu}\otimes \widetilde{\nu})$-measurable set $E$ with $(\widetilde{\mu}\otimes \widetilde{\nu})(E)>0$ such that
    $$(\widetilde{\mu}\otimes\widetilde{\nu})((C\times D)\cap E)<(\widetilde{\mu}\otimes \widetilde{\nu})(C\times D), \quad \forall \ C\times D\in \Sigma_1\times \Sigma_2 \text{ with } \widetilde{\mu}(C)\widetilde{\nu}(D)>0.$$
    Observe that replacing $E$ by $E\cap (A\times B)$, we may assume that $E\subseteq A\times B$.
    Thus, it is clear that $\chi_E\in L_1(\widetilde{\mu}\otimes \widetilde{\nu})$ does not attain its norm, thanks to Corollary~\ref{coro:equivalencecharacteristicfunctions}. Furthermore, $E$ is also $(\mu\otimes\nu)$-measurable and since $E\subseteq A\times B$, we have $\chi_E\in L_1(\mu\otimes \nu)$. Finally, since $L_1(\mu)=L_1(\mu|_A)\oplus_1 L_1(\mu|_{\Omega_1\setminus A})$ and $L_1(\nu)=L_1(\nu|_B)\oplus_1 L_1(\nu|_{\Omega_2\setminus B})$ we conclude that $\chi_E\notin \NA_\pi (L_1(\mu)\pten L_1(\nu))$, in virtue of \cite[Lemma~3.1]{GGR}.
\end{proof}

\begin{rema}
    There is an alternative proof of Theorem \ref{thm:L1L1}. Indeed, a slight modification of \cite[Lemma 2.1]{saleh1} allows us to find, for any cardinals $\alpha, \beta$, a measurable set $S\in [0,1]^\alpha \times [0,1]^\beta$ such that $\widetilde{\lambda}(S\cap (A\times B))<\widetilde{\lambda} (A\times B)$ for all measurable sets $A\subseteq [0,1]^\alpha$ and $B\subseteq[0,1]^\beta$, where $\widetilde{\lambda}$ is the product Lebesgue measure on $[0,1]^\alpha\times[0,1]^\beta$. Hence, $\chi_S\notin \NA_\pi (L_1([0,1]^\alpha)\pten L_1([0,1]^\beta))$ in virtue of Corollary \ref{coro:equivalencecharacteristicfunctions}. This, combined with \cite[p. 501]{deflo}, \cite[Theorem 14.9]{Lacey} and \cite[Lemma 3.1]{GGR} finishes the proof.

    The advantage of the proof given in Theorem \ref{thm:L1L1} is that it provides a more descriptive explanation of the phenomenon of norm-attainment.
\end{rema}

In the last result of this section, we leverage the identification of norm-attaining elements to establish a topological property of the set $\NA_\pi(L_1(\mu)\pten L_1(\nu))$, for finite atomless measures.

\begin{prop}\label{prop:L1L1meager}
    Let $(\Omega_1, \Sigma_1,\mu)$ and $(\Omega_2,\Sigma_2,\nu)$ be finite atomless measure spaces. Then, the set $\NA_\pi(L_1(\mu)\pten L_1(\nu))$ is meager. 
\end{prop}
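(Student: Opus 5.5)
We plan to write $\NA_\pi(L_1(\mu)\pten L_1(\nu))$ as a countable union of closed nowhere dense sets. For each $n\in\N$ let
$$F_n=\Big\{f\in L_1(\mu\otimes\nu): f=\sum_{i=1}^\infty f_i\otimes g_i,\ \sum_{i=1}^\infty\norm{f_i}\norm{g_i}=\norm{f}\leq n\Big\},$$
so that $\NA_\pi=\bigcup_n F_n$. The difficulty is that $F_n$ need not be closed, so instead we work with a description coming from Lemma~\ref{lema:normattainingL1L1}: every norm-attaining $f$ has $\arg f$ constant on each rectangle $\supp f_i\times\supp g_i$, and the support of $f$ is a countable union of rectangles. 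We therefore aim for a closed condition of a quantitative nature. Fix the set $E$ from Lemma~\ref{lema:setwithoutrectangles}, after normalising $\mu(\Omega_1)=\nu(\Omega_2)=1$. For $f$ norm-attaining, Lemma~\ref{lema:NAL1L1} gives an optimal representation with a common phase on overlapping rectangles. Hence for every $\varepsilon>0$ there are finitely many rectangles $R_1,\dots,R_k$ on which $f$ has a common phase and which carry all but $\varepsilon$ of the mass of $f$.

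The key step is to define, for $m\in\N$, the set $C_m$ of those $f$ such that
$$\norm{f}-\sup\Big\{\Big\|\sum_{i\le k} f_i\otimes g_i\Big\|_{\text{optimal}}\Big\}\ \text{is}\ \dots$$
This is awkward. A cleaner route uses the density and openness of a ``bad'' region. We would show that the complement of $\NA_\pi$ contains a dense $G_\delta$. Set
$$U_m=\Big\{f:\ \text{there is } h\in L_\infty,\ \norm{h}\le1,\ \text{with } \langle h,f\rangle>\norm{f}-\tfrac1m \text{ and } \dots\Big\}.$$
This too is unwieldy. So we proceed concretely instead. By Lemma~\ref{lema:urealsiiu+u-} and Theorem~\ref{thm:L1L1characterization} in the real case (the complex case is analogous, using phases), norm-attainment forces every level set $\{\operatorname{Re} f>c\}$ to be a countable union of rectangles up to null sets. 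Define
$$G_{m}=\Big\{f:\ \exists\, c\in\mathbb{Q},\ \exists \text{ a simple } s \text{ with } \norm{f-s}<\tfrac1m,\ \text{where } s \text{ has the level-set pattern } 2\chi_{E'}+\chi_{E'^c}\Big\},$$
where $E'$ is a translate of $E$ scaled into a small rectangle. Each $G_m$ is open and dense, because any simple function can be perturbed on a tiny rectangle (the measures being atomless) by inserting a copy of $2\chi_E+\chi_{E^c}$, exactly as in Proposition~\ref{prop:NAComplementaryDense}.

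The main obstacle is to show that $\bigcap_m G_m$ is disjoint from $\NA_\pi$. This requires a robust version of Lemma~\ref{lema:setwithoutrectangles}: a quantitative gap $(\mu\otimes\nu)((A\times B)\setminus E)\ge\delta\,(\mu\otimes\nu)(A\times B)$ on a suitable family of rectangles. With such a gap, being $1/m$-close to functions whose level sets are far from countable unions of rectangles should persist in the limit. A failure of this plan would be repaired by replacing the fixed $E$ with the density-type estimate from the proof of Lemma~\ref{lema:setwithoutrectangles}, where the Radon--Nikod\'ym densities $f,h$ give a uniform lower bound.
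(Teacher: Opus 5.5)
There is a genuine gap. The decisive claim, that $\bigcap_m G_m$ misses $\NA_\pi(L_1(\mu)\pten L_1(\nu))$, is left open as your ``main obstacle'', and the sets $G_m$ as you describe them cannot deliver it. Your density argument lets the copy of $2\chi_{E'}+\chi_{E'^c}$ sit in an arbitrarily small rectangle. But then every $f$ is within $1/m$ of such an $s$: approximate $f$ by a simple function and insert the pattern on a rectangle of tiny measure. So $G_m$ is the whole space, and $\bigcap_m G_m$ contains every norm-attaining tensor. More fundamentally, $\NA_\pi(L_1(\mu)\pten L_1(\nu))$ is dense (Section~\ref{sect:notationpreliminaries}), so being close to a function with a ``bad'' level set never obstructs norm attainment. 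What you need instead is a quantitative property that every nonzero norm-attaining function has, indexed by countably many parameters, each giving a nowhere dense set.

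Your ``robust'' gap also fails as stated. A proportional gap $(\mu\otimes\nu)((A\times B)\setminus E)\geq\delta(\mu\otimes\nu)(A\times B)$ cannot hold over all rectangles for any $E$ of positive measure: approximating $E$ by finite unions of rectangles produces rectangles in which $E$ has relative measure arbitrarily close to $1$. Your proposed repair does not help either. The Radon--Nikod\'ym densities in the proof of Lemma~\ref{lema:setwithoutrectangles} depend on $A\times B$, so they give positivity rectangle by rectangle but no uniform bound.

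Here is the missing idea, as the paper uses it.

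\emph{Step 1: locate a fat rectangle.} If $f=\sum_i f_i\otimes g_i$ is optimal, then by Lemma~\ref{lema:NAL1L1} so is $|f|=\sum_i|f_i|\otimes|g_i|$. This also replaces your unjustified ``complex case is analogous'' reduction to level sets of $\operatorname{Re} f$. Theorem~\ref{thm:L1L1characterization} then writes $|f|=\sum_i c_i\chi_{A_i}\otimes\chi_{B_i}$. Hence if $f\neq0$ there are rationals $q,r>0$ and a rectangle $A\times B$ with $\mu(A),\nu(B)\geq q$ and $|f|\geq r\chi_{A\times B}$. Writing $A^r_q$ for the set of such $f$, this gives $\NA_\pi(L_1(\mu)\pten L_1(\nu))\subseteq\{0\}\cup\bigcup_{q,r\in\mathbb Q_{>0}}A^r_q$.

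\emph{Step 2: each $A^r_q$ is nowhere dense.} Given $f$ and $\varepsilon>0$, use a fat Cantor set in Lemma~\ref{lema:setwithoutrectangles} to get $E$ containing no rectangle of positive measure (up to null sets), with $(\mu\otimes\nu)(E^c)$ so small that $\norm{f-f\chi_E}<\varepsilon$. Set $\delta=\inf\{(\mu\otimes\nu)((A\times B)\cap E^c):\mu(A),\nu(B)\geq q\}$; this is positive. Otherwise, take a sequence $(A_n\times B_n)$ realizing the infimum $0$. A common weak$^*$-convergent subnet of $(\chi_{A_n},\chi_{B_n})$ in $B_{L_\infty(\mu)}\times B_{L_\infty(\nu)}$ then has a limit $(g,h)$ with $g,h\geq0$, $\int g\,d\mu\geq q$, $\int h\,d\nu\geq q$, and $\int_{E^c}g\otimes h\,d(\mu\otimes\nu)=0$. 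Choosing $s,t>0$ with $C=\{g\geq s\}$ and $D=\{h\geq t\}$ of positive measure, this contradicts the fact that $E$ contains no rectangle $C\times D$ of positive measure. Consequently every element of $A^r_q$ is at distance at least $r\delta$ from $f\chi_E$.

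This is exactly the uniform lower bound you were after. It must be restricted to rectangles with both sides of measure at least $q$, where it is an absolute rather than a proportional gap. And it must be applied to the fat rectangles that norm-attaining functions are forced to contain, not to functions near a bad pattern.
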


\begin{proof}
   Assume that $\mu(\Omega_1)=1=\nu(\Omega_2)$. Observe that if $f\in \NA_\pi(L_1(\mu)\pten L_1(\nu))$, then Lemma \ref{lema:NAL1L1} easily yields $|f|\in \NA_\pi(L_1(\mu)\pten L_1(\nu))$ (the representation $|f|=\sum_{i=1}^\infty |f_i|\otimes|g_i|$ is optimal, whenever $\sum_{i=1}^\infty f_i\otimes g_i$ is an optimal representation of $f$). Hence, thanks to Theorem \ref{thm:L1L1characterization}, we have
   $$\NA_\pi(L_1(\mu)\pten L_1(\nu)) \subseteq \bigcup_{q,r\in \mathbb{Q}_{>0}}A^r_{q},$$
   where \begin{align*}
       A_q^r&=\{f\in L_1(\mu\otimes\nu): \exists A\times B\in \Sigma_1\times\Sigma_2 \text{ with } \mu(A),\nu(B)\geq q \text{ and } |f|\geq r\chi_{A\times B}\}
   \end{align*}
   for each $q,r\in \mathbb{Q}_{>0}$. Hence, to see that $\NA_\pi(L_1(\mu)\pten L_1(\nu))$ is meager, it suffices to prove that the sets $A_q^r$ are all nowhere dense. Fix $q,r\in \mathbb{Q}_{>0}$. Let $\varepsilon>0,$ $f\in L_1(\mu\otimes\nu)$,  and we have to find $\widetilde{f}\in L_1(\mu\otimes\nu)$ and $\delta>0$ such that $\norm{f-\widetilde{f}}<\varepsilon$ and $B(\widetilde{f},\delta)\cap A_q^r=\emptyset$ (where $B(\widetilde{f},\delta)$ stands for the open ball centered at $\widetilde{f}$ with radius $\delta$). Since $f\in L_1(\mu\otimes\nu)$, pick $\varepsilon'>0$ such that if $(\mu\otimes\nu)(F)<\varepsilon'$, then $\int_F |f(\omega,\gamma)| \text{ d}(\mu\otimes\nu)<\varepsilon$. Taking $\Delta$ to be a (sufficiently) fat Cantor set in the proof of Lemma \ref{lema:setwithoutrectangles}, we may find a $(\mu\otimes\nu)$-measurable set $E$ such that $$(\mu\otimes\nu)(E^c)=1-(\mu\otimes\nu)(E)=1-\lambda(\Delta)<\varepsilon'$$ and $$(\mu\otimes\nu)((A\times B)\cap E)<(\mu\otimes \nu)(A\times B), \quad \forall A\times B\in \Sigma_1\times \Sigma_2 \text{ with } \mu(A)\nu(B)>0.$$ Define $\delta\coloneqq \inf\{ (\mu\otimes\nu)((A\times B)\cap E^c) : \mu(A),\nu(B)\geq q\}$, and we claim that $\delta>0$. Otherwise, we can find $(A_n\times B_n)_{n=1}^\infty \subseteq \Sigma_1\times \Sigma_2$ such that $\mu(A_n),\nu(B_n)\geq q$, for all $n\in \N$ and $\lim_{n\to\infty}(\mu\otimes\nu)((A_n\times B_n)\cap E^c)=0$. Since $(\chi_{A_n})_{n=1}^\infty\subseteq B_{L_\infty(\mu)}$ and $(\chi_{B_n})_{n=1}^\infty \subseteq B_{L_\infty(\nu)}$, there are subnets $(\chi_{A_\alpha})_\alpha$ and $(\chi_{B_\beta})_\beta$ that converge in the weak* topology to some $g\in L_\infty(\mu)$ and $h\in L_\infty(\nu)$, respectively. First, observe that
   \begin{align*}
       \int_{\Omega_1} g(\omega) \text{ d}\mu&=\langle g,1\rangle = \lim_\alpha \langle \chi_{A_\alpha},1\rangle =\lim_\alpha\int_{\Omega_1} \langle\chi_{A_\alpha},1\rangle \text{ d}\mu=\lim_\alpha\mu(A_\alpha)\geq q>0,
       \\ \int_{\Omega_1} h(\gamma) \text{ d}\nu&=\langle h,1\rangle = \lim_\beta \langle \chi_{B_\beta},1\rangle =\lim_\beta\int_{\Omega_2} \langle\chi_{B_\beta},1\rangle \text{ d}\nu=\lim_\beta\nu(B_\beta)\geq q>0.
   \end{align*}
   Then, we can find $s,t>0$ such that $C=\{\omega\in \Omega_1: g(\omega)\geq s\}$ and $D=\{\gamma\in \Omega_2: h(\gamma)\geq t\}$ have both positive measure. It follows that
   \begin{align*}
       \int_{E^c} g(\omega)h(\gamma) \text{ d}(\mu\otimes\nu) &\geq \int_{E^c\cap (C\times D)} g(\omega)h(\gamma) \text{ d}(\mu\otimes\nu)\\&\geq st(\mu\otimes\nu)(E^c\cap (C\times D))>0.
   \end{align*}
   Furthermore, it is not hard to see that $(\chi_{A_\alpha}\otimes\chi_{B_\beta})_{(\alpha,\beta)}$ converges weak* to $g\otimes h\in L_\infty(\mu\otimes\nu)$. Hence, 
   \begin{align*}
       \int_{E^c} g(\omega)h(\gamma) \text{ d}(\mu\otimes\nu)&=\lim_{\alpha,\beta}\int_{E^c} \chi_{A_\alpha}(\omega)\chi_{B_\beta}(\gamma) \text{ d}(\mu\otimes\nu)\\&=\lim_{\alpha,\beta}(\mu\otimes\nu)((A_\alpha\times B_\beta)\cap E^c)=0,
   \end{align*}
   obtaining a contradiction. Thus, $\delta>0$. 

   Next, consider $\widetilde{f}\coloneqq f|_E$. On the one hand,
   $$\norm{f-\widetilde{f}}=\int_{E^c} |f(\omega,\gamma)| \text{ d}(\mu\otimes\nu) <\varepsilon.$$
   On the other hand, given $\widehat{f}\in A_q^r$, there are $A\times B\in \Sigma_1\times \Sigma_2$ with $\mu(A),\nu(B)\geq q$ and $|\widehat{f}|_{A\times B}|\geq r$, so
   $$\norm{\widehat{f}-\widetilde{f}}\geq \int_{E^c\cap (A\times B)} |\widehat{f}(\omega,\gamma)| \text{ d}(\mu\otimes\nu) \geq r\mu(E^c\cap (A\times B))\geq r\delta.$$
   In conclusion, $B(\widetilde{f},r\delta)\cap A_q^r=\emptyset$, finishing the proof.
\end{proof}

\section{Spaces \texorpdfstring{$Y$}{Y} for which every tensor in \texorpdfstring{$L_1(\mu)\pten Y$}{L1(mu,Y)} attains its norm}\label{sect:CCG}

In this section, we restrict our attention to the case $\mathbb K=\mathbb R$. As we will show in Remark \ref{rema:CCGcomplex}, our techniques do not apply to the complex case.

Recall that there are Banach spaces $Y$ such that $\NA_\pi(L_1(\mu)\pten Y)=L_1(\mu)\pten Y$, the simplest one is $Y=\ell_1(I)$, since $\NA_\pi(X\pten \ell_1(I))=X\pten\ell_1(I)$ for any Banach space $X$. More generally, since $L_1(\mu)$ is complemented in its bidual, Corollary 3.5 in \cite{GGR} shows that $\NA_\pi(L_1(\mu)\pten Y^*)=L_1(\mu)\pten Y^*$ whenever $Y$ is a subspace of a space $Z$ such that $Z^*=\ell_1(I)$ isometrically. This includes the case where $Y$ is a finite-dimensional space with finitely many extreme points in the ball as well as some Lipschitz-free spaces (e.g. if $M$ is a countable compact metric space). A key point in that result is the existence of an adjoint quotient operator $Q\colon \ell_1(I)\to Y^*$, so for any $y^*\in Y^*$ there is a preimage with the same norm (that is, $y^*$ is a ``convex series'' of the points $Q(e_i)$). That is the motivation for the following definition.

\begin{defi}\label{def:CCG} Let $Y$ be a real Banach space. We say that $Y$ is \emph{countably convexly generated (CCG)} if there is a sequence $(e_n)_{n\in \N}\subseteq S_Y$ such that for each $y\in S_Y$ we can find $(\lambda_n)_{n\in \N}\subseteq [0,+\infty)$ with $\sum_{n=1}^\infty \lambda_n=1$ and $y=\sum_{n=1}^\infty \lambda_n e_n$.
\end{defi}

Recall that an \textit{extreme point} of a convex set $C$ is a point that cannot be expressed as a nontrivial convex combination of different elements of $C$. The set of extreme points of $C$ is denoted by $\ext (C)$.

\begin{rema} A Banach space $Y$ has the \emph{Convex Series Representation Property} (CSRP) \cite{ALS} if for every $y\in S_Y$ there are $(\lambda_n)_n\subseteq [0,1]$ with $\sum_{n=1}^\infty \lambda_n=1$ and $(e_n)_n\subseteq\ext B_Y$ such that $y=\sum_{n=1}^\infty \lambda_n e_n$. Clearly, if $Y$ has the CSRP and $\ext(B_Y)$ is countable, then $Y$ is CCG. Conversely, if $Y$ is CCG then $\ext(B_Y)$ is countable (but it might be empty, as the examples below show, thus failing the CSRP). 

Moreover, in \cite[Theorem 1]{ALS} it is shown that $Y$ has the CSRP if and only if it has \emph{$\lambda$-property}, that is, for each $x\in B_Y$ there are $e\in \ext B_Y$, $y\in B_Y$ and $\lambda \in (0,1]$ such that $x=\lambda e+ (1-\lambda)y$. The same proof shows that $Y$ is CCG if and only if there is a countable set $D\subseteq S_Y$ such that 
\[ \forall x\in B_Y\, \exists y\in B_Y, e\in D, \lambda\in (0,1] \text{ such that } x=\lambda e+(1-\lambda)y\]
\end{rema}

We provide now some more examples. Recall that for a topological space $S$, we say that $S$ is  \textit{scattered} if every non-empty subset $A\subseteq S$ contains an isolated point in $A$. We also say that $S$ is \textit{totally disconnected} if its connected subsets are just the singletons. Furthermore, we refer the reader to \cite{Weaver} for the definition and properties of the Lipschitz-free space (also called Arens-Eells space) over a metric space.

\begin{ejem}\label{ex:CCG} The following spaces are CCG.
\begin{enumerate}[a)]
    \item The Lipschitz-free space $\mathcal F(M)$ over a complete countable  metric space  $M$.  
    \item $C(K)$, where $K$ is a compact metrizable totally disconnected space. 
    \item The spaces $c_0$, $c$ and $\ell_1$.  
      \item $(\bigoplus_{n=1}^\infty Y_n)_0$ and $(\bigoplus_{n=1}^\infty Y_n)_1$, where each $Y_n$ is CCG.    
\end{enumerate}
\end{ejem}

\begin{proof}
a) Since $M$ is complete and countable, it is scattered, and thus every element in $\mathcal F(M)$ is a convex series of molecules \cite[Corollary 4.3]{APS26}. Also, the set of molecules is countable since so is $M$.

b) Recall that the  extreme points of $B_{C(K)}$ are precisely the continuous functions $f\colon K\to\{-1,1\}$. Thus, there is a bijection between the extreme points (up to the sign) and the clopen subsets of $K$. The hypotheses on  $K$ imply that it is zero-dimensional, that is, clopen sets are a basis. But $K$ is also second countable since it is metrizable. Therefore, there are countably many clopen subsets of $K$. That is, $\ext B_{C(K)}$ is a countable set. 

Finally, if $K$ is compact and totally disconnected, then $C(K)$ has the (uniform) $\lambda$-property and so the CSRP (see \cite{AB, Oates}). We include here an argument for the reader's convenience. Let $f\in B_{C(K)}$. Consider the disjoint closed sets $A=\{t\in K: f(t)\geq1/2\}$ and $B=\{t: f(t)\leq-1/2\}$. Since $K$ is zero-dimensional, there is a clopen set $U$ with $A\subseteq U\subseteq K\setminus B$. Let 
\[e=\chi_U-\chi_{K\setminus U}\in \ext B_{C(K)}, \qquad g=\frac{4}{3}\left(f-\frac{1}{4}e\right)\] 
Note that $\norm{g}_\infty\leq 1$. Indeed, if $t\in U$ then $-1/2<f(t)\leq 1$ and so 
\[ \left|f(t)-\frac{1}{4}e(t)\right|=\left|f(t)-\frac{1}{4}\right|\leq \frac{3}{4}\]
Analogously, if $t\notin U$ then $-1\leq f(t)<\frac{1}{2}$ and then
\[ \left|f(t)-\frac{1}{4}e(t)\right|=\left|f(t)+\frac{1}{4}\right|\leq \frac{3}{4}\]
Finally, $f=\frac{1}{4}e+\frac{3}{4}g$. This finishes the proof. 

c) For $\ell_1$, the statement is trivial. Also, the space $c=C(\mathbb N\cup\{\infty\})$ is CCG by b). Now we prove the case of $c_0$. Given a finite subset $F\subseteq \mathbb N$ and $\varepsilon =(\varepsilon_n)_n\subseteq \{-1,1\}^F$, we denote $e_{F, \varepsilon}:= \sum_{n\in F} \varepsilon_n e_n$. Take $y\in c_0$ with $\norm{y}=1$ and fix a permutation $\pi\colon \mathbb N\to\mathbb N$ such that $\lambda_n:=|y_{\pi(n)}|- |y_{\pi(n+1)}|\geq 0$ for each $n$. Note that $\sum_{n=1}^\infty \lambda_n= |y_{\pi(1)}|=\norm{y}=1$. Take $F_n=\{\pi(k):k\leq n\}$,  $\varepsilon_{\pi(k)} = \sign y_{\pi(k)}$ and $\varepsilon^{n} = (\varepsilon_k)_{k\in F_n}$. Then it follows that
\[ y = \sum_{n=1}^\infty \lambda_n e_{F_n, \varepsilon^n}\]
Also, it is clear that the set $\left\{e_{F, \varepsilon}: F \text{ is finite and } \varepsilon\in \{-1,1\}^F\right\}$ is countable.

d) It can be proved with a similar argument as in c).
\end{proof}

The following is the main result of this section. It enlarges the known cases where every tensor attains its norm. 

\begin{thm}\label{thm:CSRP}
    Let $(\Omega, \Sigma, \mu)$ be a measure space and let $Y$ be a real  Banach space. If $Y$ is CCG, then $$\NA_\pi (L_1(\mu)\pten Y)= L_1(\mu)\pten Y.$$
\end{thm}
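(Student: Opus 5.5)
Given $u\in L_1(\mu,Y)$, the plan is to decompose the normalized values $u(\omega)/\norm{u(\omega)}$ pointwise along the countable family $(e_n)_{n\in\N}\subseteq S_Y$ from Definition \ref{def:CCG}, but with coefficients that depend \emph{measurably} on $\omega$. Concretely, we aim for measurable functions $\lambda_n\colon\Omega\to[0,1]$ with $\sum_n\lambda_n(\omega)=1$ and
\[ u(\omega)=\norm{u(\omega)}\sum_{n=1}^\infty \lambda_n(\omega)e_n \quad \mu\text{-a.e.} \]
Put $f_n:=\norm{u(\cdot)}\lambda_n\in L_1(\mu)$. Then $\sum_n\norm{f_n}\norm{e_n}=\int_\Omega\norm{u(\omega)}\sum_n\lambda_n(\omega)\,d\mu=\norm{u}$ by monotone convergence. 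Hence the series $\sum_n f_n\otimes e_n$ converges absolutely in $L_1(\mu)\pten Y=L_1(\mu,Y)$. Its limit agrees a.e. with $u$: a subsequence of partial sums converges a.e., and the pointwise series converges to $u(\omega)$. So the representation is optimal and $u\in\NA_\pi$.

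The whole difficulty is therefore the measurable selection of the coefficients, which is the main obstacle. Since $u$ is strongly measurable, we may assume $u$ is separably valued; we can then try to reduce to a separable closed subspace, or simply work with the Borel map $P\circ u$ into $S_Y$ (this map is Borel because $u$ is strongly measurable). The key object is the map
\[ T\colon \Delta:=\Big\{(\lambda_n)\in[0,1]^\N:\textstyle\sum_n\lambda_n=1\Big\}\to S_Y,\qquad T(\lambda)=\sum_n\lambda_n e_n. \]
Here $\Delta$ is the positive face of $B_{\ell_1}$, a Polish space in the $\ell_1$-norm, and $T$ is $1$-Lipschitz and, by the CCG hypothesis, onto $S_Y$.

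To pick coefficients measurably, I would try the following. On $\ell_1$ the map $T$ extends to a bounded linear quotient-type map $\ell_1\to Y$ which sends the positive unit face onto $S_Y$. By the Jankov--von Neumann uniformization theorem, $T^{-1}$ admits a universally measurable right inverse $\sigma\colon T(\Delta)\cap Z\to\Delta$, where $Z$ is the separable range. Composing, $\lambda(\omega):=\sigma(P(u(\omega)))$ is $\mu$-measurable after completing $\mu$; this is harmless, since $L_1$ classes are unaffected. Each coordinate $\lambda_n(\omega)$ is then measurable.

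If that route feels too heavy, I would instead argue constructively, as in the proof of Theorem \ref{thm:NAtotalvariation}. Approximate $P\circ u$ by countably-valued functions $v_k$ with $\norm{P\circ u-v_k}_\infty\le 2^{-k}$; this is possible since $u$ is separably valued. Then use the decompositions: each value of $v_1$ is a convex series of the $e_n$, and each residual difference $v_{k+1}-v_k$, of norm at most $3\cdot 2^{-k}$, is likewise a scaled convex series of the $e_n$. This gives a countably-valued, hence measurable, choice at each stage. The remaining check is that the total mass $\sum_k\norm{v_{k+1}-v_k}$ does not add to the norm, which would break optimality. This is exactly why the exact selection via uniformization is preferable, and I would commit to it.
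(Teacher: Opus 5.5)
Your argument is correct. It is essentially the alternative proof sketched in Remark \ref{rema:directproofCCG}: there too the convex-series coefficients of $u(\omega)/\norm{u(\omega)}$ are chosen measurably, and one sets $f_n=\norm{u(\cdot)}\lambda_n$. The selection there comes from a measurable selection theorem for Souslin sets, which plays the role of your Jankov--von Neumann uniformization.

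The paper's official proof goes a different way: it deduces the theorem from Theorem \ref{thm:quotientwKa} with $X=\ell_1$.
\begin{itemize}
\item The CCG hypothesis says precisely that $Q\colon\ell_1\to Y$, $Q\lambda=\sum_n\lambda_n e_n$, is a quotient operator in which every $y$ has a preimage of norm $\norm{y}$.
\item Hence $\ker Q$ is proximinal (Lemma \ref{lemma:proximinal}), and it is separable, hence WCD.
\item The Cascales--Raja theorem then makes $\ker(I\otimes Q)=L_1(\mu,\ker Q)$ (Lemma \ref{lemma:kerIT}) proximinal in $L_1(\mu,\ell_1)$.
\item So $u$ lifts to some $w\in L_1(\mu,\ell_1)$ with $\norm{w}=\norm{u}$. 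The optimal representation of $w$ by its coordinate functions $w_n$ is pushed forward by $Q$ to an optimal representation $u=\sum_n w_n\otimes e_n$.
\end{itemize}
Both routes end with a representation along the fixed sequence $(e_n)$. Yours performs the measurable selection by hand and is self-contained modulo a uniformization theorem. The paper hides the selection inside Cascales--Raja, and in exchange obtains the more general transfer principle of Theorem \ref{thm:quotientwKa}. That principle holds for any quotient with WCD proximinal kernel out of a space $X$ for which every element of $L_1(\mu)\pten X$ attains its norm.

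Two small points to tidy.
\begin{itemize}
\item $T(\Delta)$ contains $S_Y$ but need not equal it: if $e_2=-e_1$ then $0\in T(\Delta)$. This is harmless, since you only evaluate $\sigma$ on $S_Y\cap Z$.
\item The composition of the universally measurable $\sigma$ with the Borel map $P\circ u$ is measurable for the completion only once you work with a finite measure. For example, restrict to $\supp u$ and use the equivalent finite measure $\norm{u(\omega)}\,d\mu$. The issue is that a push-forward of a merely $\sigma$-finite measure need not be $\sigma$-finite. The paper's reduction to complete finite measures handles exactly this.
\end{itemize}
You were right to discard the residual-approximation scheme: the increments $v_{k+1}-v_k$ would generally add mass and destroy optimality.
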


We will obtain Theorem \ref{thm:CSRP} as a consequence of a measurable selection theorem by Cascales and Raja \cite{CR} (see also Remark \ref{rema:directproofCCG} for an alternative and more direct proof of Theorem \ref{thm:CSRP}). To this end, we need first some preparatory lemmas. The first one is the following relation between quotients and proximinality, that is well known to specialists, and it is a direct consequence of \cite[Theorem 2.3]{singer}. Recall that a subspace $Z$ of a Banach space $X$ is said to be \textit{proximinal} if for every $x\in X$ there is some $z\in Z$ which minimizes the distance from $x$ to $Z$. Moreover, a bounded operator $Q\colon X\to Y$ is  a \textit{quotient operator} if $Q$ is onto and $\norm{y}=\inf\{\norm{x}: Qx=y\}$ for each $y\in Y$.

\begin{lema}\label{lemma:proximinal}
 Let $X, Y$ be Banach spaces and $Q\colon X\to Y$ be a quotient operator. The following are equivalent:
\begin{itemize}
    \item[i)] $\ker Q$ is a proximinal subspace of $X$. 
    \item[ii)] For every $y\in Y$ there is $x\in X$ such that $Qx=y$ and $\norm{x}=\norm{y}$.
\end{itemize}
\end{lema}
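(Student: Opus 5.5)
The plan is to prove both implications directly from the definitions, using the standard identification of $Y$ with $X/\ker Q$. Since $Q$ is a quotient operator, the induced map $\widehat Q\colon X/\ker Q\to Y$, $\widehat Q(x+\ker Q)=Qx$, is a surjective linear isometry. Concretely, for every $x\in X$ we have
\[ \norm{Qx}=\inf\{\norm{x'}: Qx'=Qx\}=\inf\{\norm{x-z}: z\in\ker Q\}=d(x,\ker Q). \]
This single identity carries the whole argument.

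For i) $\Rightarrow$ ii):
\begin{itemize}
\item Fix $y\in Y$. Since $Q$ is onto, pick any $x_0\in X$ with $Qx_0=y$.
\item By proximinality of $\ker Q$, choose $z\in\ker Q$ with $\norm{x_0-z}=d(x_0,\ker Q)$.
\item Set $x=x_0-z$. Then $Qx=Qx_0=y$, and by the identity above $\norm{x}=d(x_0,\ker Q)=\norm{Qx_0}=\norm{y}$.
\end{itemize}

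For ii) $\Rightarrow$ i):
\begin{itemize}
\item Fix $x_0\in X$ and put $y=Qx_0$. By ii), there is $x\in X$ with $Qx=y$ and $\norm{x}=\norm{y}$.
\item Let $z=x_0-x$. Then $Qz=Qx_0-Qx=0$, so $z\in\ker Q$.
\item Moreover $\norm{x_0-z}=\norm{x}=\norm{y}=d(x_0,\ker Q)$, again by the identity. Hence $z$ is a best approximation to $x_0$ in $\ker Q$, and $\ker Q$ is proximinal.
\end{itemize}

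There is essentially no obstacle; this is the content of \cite[Theorem 2.3]{singer}. The only point that needs care is checking the identity $\norm{Qx}=d(x,\ker Q)$. It follows because the fibre $\{x': Qx'=Qx\}$ is exactly $x+\ker Q$, so the infimum in the definition of a quotient operator is precisely the distance from $x$ to $\ker Q$.
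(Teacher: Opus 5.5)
Your proof is correct. The paper gives no argument of its own and simply cites \cite[Theorem 2.3]{singer}; your identity $\norm{Qx}=d(x,\ker Q)$, followed by unwinding both implications, is exactly the content of that reference, made self-contained.
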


\begin{lema}\label{lemma:kerIT} Let $(\Omega, \Sigma, \mu)$ be a measure space, and $Y, Z$ be Banach spaces. Given $I\colon L_1(\mu)\to L_1(\mu)$  the identity map, and $T\colon Y\to Z$ be a bounded operator, let $I\otimes T\colon L_1(\mu,Y)\to L_1(\mu, Z)$. Then 
\[\ker(I\otimes T)=L_1(\mu, \ker T)\]
\end{lema}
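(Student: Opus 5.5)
We prove the two inclusions separately. First we identify $L_1(\mu,\ker T)$ with the closed subspace of $L_1(\mu,Y)$ consisting of those (classes of) Bochner integrable functions $f$ with $f(\omega)\in\ker T$ for $\mu$-a.e.\ $\omega$. This identification is legitimate because $\ker T$ is a closed subspace of $Y$, so strong measurability and Bochner integrability into $\ker T$ coincide with the same notions into $Y$ for functions with values in $\ker T$. The key step is to describe $I\otimes T$ pointwise:
\[(I\otimes T)(f)(\omega)=T(f(\omega)),\qquad \omega\in\Omega,\ \mu\text{-a.e.}\]
for every $f\in L_1(\mu,Y)$.

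To verify this formula, take an elementary tensor $f=g\otimes y$. Then $(I\otimes T)(g\otimes y)=g\otimes Ty$, which under the identification $L_1(\mu)\pten Z=L_1(\mu,Z)$ is the function $\omega\mapsto g(\omega)Ty=T(g(\omega)y)$. By linearity the formula holds on $L_1(\mu)\otimes Y$, which is dense in $L_1(\mu,Y)$. Both sides define bounded operators $L_1(\mu,Y)\to L_1(\mu,Z)$. The left side is bounded with norm $\norm{T}$ by the metric mapping property of $\pten$. The right side $f\mapsto T\circ f$ is well defined, since $T\circ f$ is strongly measurable as $T$ is continuous, and it satisfies $\norm{T\circ f}\leq\norm{T}\norm{f}$. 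Two bounded operators agreeing on a dense set agree everywhere, so the formula holds on all of $L_1(\mu,Y)$.

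With the formula in hand, $f\in\ker(I\otimes T)$ if and only if $T(f(\omega))=0$ for a.e.\ $\omega$. This says exactly that $f(\omega)\in\ker T$ a.e., which is the description of $L_1(\mu,\ker T)$ given above, and both inclusions follow. The only point that needs care is the density-and-continuity extension, in particular checking that $f\mapsto T\circ f$ is a bounded operator on Bochner classes. This is routine, and I do not expect any real obstacle.
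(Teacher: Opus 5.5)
Your proposal is correct and follows essentially the same route as the paper. The paper also introduces the pointwise operator $Ru(\omega)=T(u(\omega))$, observes that it is bounded and agrees with $I\otimes T$ on elementary tensors (hence everywhere), and then reads off $\ker R=L_1(\mu,\ker T)$. You additionally spell out the density and continuity extension and the identification of $L_1(\mu,\ker T)$ as a closed subspace, which the paper leaves implicit.
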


\begin{proof}
Note that $R\colon L_1(\mu, Y)\to L_1(\mu,Z)$ given by $Ru(\omega)=T(u(\omega))$
is a well-defined bounded linear operator. Also, $R$ coincides with $I\otimes T$  on the basic tensors $f\otimes y$ with $f\in L_1(\mu)$ and $y\in Y$. Thus, $I\otimes T=R$. 

Now, 
\begin{align*}
\ker R &= \{u\in L_1(\mu, Y): T(u(\omega))=0, \ \mu-\text{a.e.}\}\\
&=\{u\in L_1(\mu, Y): u(\omega)\in \ker T, \, \mu-\text{a.e}\}= L_1(\mu, \ker T)
\end{align*}
\end{proof}

Cascales and Raja proved in \cite[Theorem 3.4 and Remark 3.10]{CR} that if $(\Omega, \Sigma,\mu)$ is a complete probability space and $Z$ is a weakly countably determined (WCD) subspace of $X$, then $Z$ is proximinal in $X$ if and only if $L_1(\mu, Z)$ is proximinal in $L_1(\mu, X)$. In particular, separable or reflexive Banach spaces are examples of WCD spaces (see \cite[Remark 3.10]{CR} for the definition and further comments).

\begin{thm}\label{thm:quotientwKa} Let $(\Omega, \Sigma, \mu)$ be a measure space and let $X, Y$ be real Banach spaces. Assume that there is a quotient operator $Q\colon X\to Y$ such that $\ker Q$ is WCD and proximinal in $X$. If $\NA_\pi(L_1(\mu)\pten X)=L_1(\mu)\pten X$, then $\NA_\pi(L_1(\mu)\pten Y)=L_1(\mu)\pten Y$.
\end{thm}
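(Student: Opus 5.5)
The idea is to lift a given $v\in L_1(\mu)\pten Y=L_1(\mu,Y)$ to an element of $L_1(\mu,X)$ of the same norm, take an optimal representation there, and push it down by $I\otimes Q$. Consider $I\otimes Q\colon L_1(\mu,X)\to L_1(\mu,Y)$, which acts as $u\mapsto Q\circ u$. It is standard that $I\otimes Q$ is again a quotient operator, since the projective tensor product respects quotients. Lemma \ref{lemma:kerIT} gives $\ker(I\otimes Q)=L_1(\mu,\ker Q)$. Hence, by Lemma \ref{lemma:proximinal}, it suffices to show that $L_1(\mu,\ker Q)$ is proximinal in $L_1(\mu,X)$. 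Once that is known, each $v\in L_1(\mu,Y)$ has a preimage $u\in L_1(\mu,X)$ with $\norm{u}=\norm{v}$.

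Given such a lift, the hypothesis provides an optimal representation $u=\sum_i f_i\otimes x_i$ with $\norm{u}=\sum_i\norm{f_i}\norm{x_i}$. Applying $I\otimes Q$ yields $v=\sum_i f_i\otimes Qx_i$, and
\[\norm{v}\le \sum_i \norm{f_i}\norm{Qx_i}\le \sum_i\norm{f_i}\norm{x_i}=\norm{u}=\norm{v}.\]
Thus all the inequalities are equalities, the representation of $v$ is optimal, and $v\in\NA_\pi(L_1(\mu)\pten Y)$.

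The main obstacle is proximinality of $L_1(\mu,\ker Q)$ in $L_1(\mu,X)$. The Cascales--Raja theorem applies only to complete probability spaces, whereas $\mu$ here is arbitrary, so I would reduce to that case in three steps.

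\emph{Step 1.} Fix $u\in L_1(\mu,X)$. Its support $S=\supp u$ is $\sigma$-finite. Any $z\in L_1(\mu,\ker Q)$ satisfies $\norm{u-z\chi_S}\le\norm{u-z}$, so it suffices to find a best approximation among functions supported in $S$. This amounts to working in $L_1(\mu|_S,\cdot)$.

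\emph{Step 2.} On the $\sigma$-finite set $S$, pick a strictly positive $w\in L_1(\mu|_S)$ with $\int w=1$ and set $d\mathbb P=w\,d\mu$. The map $u\mapsto u/w$ is an isometry $L_1(\mu|_S,X)\to L_1(\mathbb P,X)$ that preserves $\ker Q$-valued functions. Best approximations therefore transfer, and the problem reduces to a probability measure.

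\emph{Step 3.} Pass to the completion of $\mathbb P$. This changes neither the $L_1$ spaces nor the identification modulo null sets, so the Cascales--Raja theorem applies with the WCD proximinal subspace $\ker Q$. It shows $L_1(\mathbb P,\ker Q)$ is proximinal in $L_1(\mathbb P,X)$, and hence, by Steps 1 and 2, $L_1(\mu,\ker Q)$ is proximinal in $L_1(\mu,X)$.

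The remaining care points are the measurability details in Step 1 and checking that completion does not alter strong measurability, up to null sets, for essentially separably valued functions.
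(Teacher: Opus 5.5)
Your proof is correct. Its core matches the paper's: you lift $v$ to an element of $L_1(\mu,X)$ of the same norm, using Lemma \ref{lemma:proximinal} applied to $I\otimes Q$ together with Lemma \ref{lemma:kerIT}, and then push an optimal representation down. The real difference is \emph{where} you reduce to the complete probability case required by Cascales--Raja.

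\textbf{The paper's route.} It proves the whole theorem for complete finite $\mu$. For general $\mu$, it splits $u$ into pieces $u\chi_{A_n}$ on disjoint sets of finite measure and uses $\norm{u}=\sum_n\norm{u\chi_{A_n}}$. It then applies the finite case to each piece and concatenates the optimal representations. So the reduction is done at the level of norm-attaining tensors.

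\textbf{Your route.} You reduce at the level of proximinality. You restrict to the $\sigma$-finite support, change density to a probability $\mathbb P$ (an isometry preserving $\ker Q$-valued functions), and pass to the completion. This shows that $L_1(\mu,\ker Q)$ is proximinal in $L_1(\mu,X)$ for an \emph{arbitrary} measure $\mu$, which is a slightly stronger intermediate statement than what the paper extracts from Cascales--Raja.

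\textbf{Trade-offs.} In your argument the lift lives in the original space $L_1(\mu,X)$, so the hypothesis $\NA_\pi(L_1(\mu)\pten X)=L_1(\mu)\pten X$ is used exactly as stated. The paper instead tacitly needs it for the completed and restricted measures $\mu|_{A_n}$. That holds, but it requires a short restriction argument. The paper's route, in turn, avoids the density change.

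\textbf{The quotient step.} You cite the standard fact that $\pten$ respects quotient operators. The paper re-derives that $I\otimes Q$ is a quotient using exact liftings coming from proximinality of $\ker Q$. Your citation makes clear that proximinality is not needed for that step; it is only needed to get norm-preserving preimages under $I\otimes Q$.
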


\begin{proof}
First assume that $(\Omega,\Sigma,\mu)$ is complete and finite. Let $Z=\ker Q$, which is a proximinal subspace of $X$ and it is WCD.  It follows from \cite[Theorem 3.4]{CR}  that $L_1(\mu, \ker Q)$ is proximinal in $L_1(\mu, X)$. But note that, by Lemma \ref{lemma:kerIT}, $L_1(\mu, \ker Q)=\ker(I\otimes Q)$. 

Now we will show that $I\otimes Q$ is a quotient operator. Given $u\in L_1(\mu, Y)$ and $\varepsilon>0$, write $u=\sum_{i=1}^\infty f_i\otimes y_i$ with $\sum_{i=1}^\infty\norm{f_i}\norm{y_i}\leq \norm{u}+\varepsilon$.
Using Lemma \ref{lemma:proximinal}, we can find vectors $x_i\in X$ with $Qx_i=y_i$ and $\norm{x_i}=\norm{y_i}$. Then clearly $v=\sum_{i=1}^\infty f_i\otimes x_i$ satisfies $(I\otimes Q)v= u$ and $\norm{v}\leq \norm{u}+\varepsilon$. 

Therefore, by Lemma \ref{lemma:proximinal} we have that for each $u\in L_1(\mu, Y)$ there is $v\in L_1(\mu, X)$ such that $(I\otimes Q)v=u$ and $\norm{u}=\norm{v}$. It follows readily that if $v=\sum_{i=1}^\infty f_i\otimes x_i$ is an optimal representation for $v$, then $u=\sum_{i=1}^\infty f_i\otimes Qx_i$ is an optimal representation for $u$.     
\\

For the case of a general measure $\mu$, let $u\in L_1(\mu,Y)$. Passing to the completion of
$\mu$ does not change $L_1(\mu,Y)$ isometrically, so we may assume that $\mu$ is complete. Let
\[
E_n=\{\omega:\|u(\omega)\|>1/n\}.
\]
Then $\mu(E_n)<\infty$ and $u=0$, $\mu-$a.e. outside $\cup_n E_n$. Taking a
measurable partition $A_n$ of $\bigcup_n E_n$ with each $\mu(A_n)<\infty$,
we have
\[
u=\sum_{n=1}^\infty u\chi_{A_n},\qquad
\|u\|=\sum_{n=1}^\infty\|u\chi_{A_n}\|.
\]
By the finite case, each $u\chi_{A_n}$ has an optimal representation on
$A_n$. Extending the scalar functions by zero and concatenating the
representations gives an optimal representation of $u$ in $L_1(\mu)\pten Y$.
\end{proof}

\begin{proof}[Proof of Theorem \ref{thm:CSRP}] It follows from Theorem \ref{thm:quotientwKa} taking $X=\ell_1(\mathbb N)$, since every separable space is WCD. 
\end{proof}

\begin{rema}\label{rema:directproofCCG}
    Indeed, one can provide an alternative proof of Theorem \ref{thm:CSRP} that does not rely on Theorem \ref{thm:quotientwKa}. The idea of the proof is the following.

    First, with the same reasoning which was done in the proof of Theorem \ref{thm:quotientwKa} we can restrict our attention to complete finite measures. Then, since $Y$ is CCG, we can define the set-valued function $F\colon  B_Y\rightrightarrows B_{\ell_1(\N)}$  given by $$F(y)\coloneqq \left\{ (\lambda_n)_n\in B_{\ell_1(\N)} : y=\sum_{n=1}^\infty \lambda_n e_n\right\}, \quad \forall y\in B_Y.$$ It is easy to see that we can apply \cite[Theorem 6.9.2]{Bogachev} to obtain a (selection) map $f: B_Y \rightarrow B_{\ell_1(\N)}$ which is measurable with respect to the $\sigma$-algebra generated by all Souslin sets in $B_{\ell_1(\N)}$ and such that $f(y)\in F(y)$, for all $y\in B_Y$. Now, for $u\in L_1(\mu)\pten Y$, define $\widetilde{u}\in L_1(\mu)\pten Y$ given by $$\widetilde{u}(\omega)\coloneqq \begin{cases}
\frac{u(\omega)}{\norm{u(\omega)}}, & \text{if } \omega\in \supp u,\\
0, & \text{if } \omega\notin \supp u.
\end{cases}$$
    Using that $\mu$ is a complete finite measure, it can be seen that $f\circ \widetilde{u}$ is $\Sigma$-Borel-measurable. Finally, we define for each $n\in \N$, the functions $\lambda_n \colon \Omega \rightarrow \K$ such that
    $$\lambda_n(\omega) \coloneqq
\pi_n(f(\widetilde{u}(\omega)))\cdot \norm{u(\omega)}, \quad \omega \in \Omega,\ \mu- \text{a.e.},$$
where $\pi_n\colon \ell_1(\N)\rightarrow\K$ is the natural projection into the $n$-th coordinate. It follows that $u=\sum_{n=1}^\infty \lambda_n \otimes e_n$ and this is an optimal representation.

Observe that, although the proof is constructive because it provides the exact optimal representation, it depends on the measurable selection theorem \cite[Theorem~6.9.2]{Bogachev} which makes it difficult in practice to determine such a representation.
\end{rema}

We were not able to find any example showing that Theorem \ref{thm:quotientwKa} is more general than Theorem \ref{thm:CSRP}. 

Now we obtain some consequences of Theorem \ref{thm:CSRP} and Example \ref{ex:CCG}. 

\begin{coro}\label{cor:F(M)}
    Let $(\Omega,\Sigma,\mu)$ be a measure space and $M$ be a complete scattered metric space. Then,
    $$\NA_\pi(L_1(\mu)\pten \mathcal F(M))=L_1(\mu)\pten \mathcal F(M).$$
\end{coro}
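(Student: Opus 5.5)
The plan is to reduce to the countable case, Example~\ref{ex:CCG}~a), and then apply Theorem~\ref{thm:CSRP}. We work over $\mathbb R$, as throughout this section. Fix $u\in L_1(\mu)\pten \mathcal F(M)=L_1(\mu,\mathcal F(M))$ and let $0$ be the base point of $M$. Since $u$ is Bochner measurable, after discarding a null set its range lies in a separable closed subspace $W\subseteq \mathcal F(M)$. Choose a dense sequence $(w_k)_k$ in $W$. Finitely supported elements (finite linear combinations of $\delta_p$) are dense in $\mathcal F(M)$, so each $w_k$ is a limit of a sequence $(v_{k,j})_j$ of finitely supported elements.

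Let $N$ be the closure in $M$ of $\{0\}\cup\bigcup_{k,j}\supp v_{k,j}$. Then $N$ is a separable closed subset of $M$ containing the base point. By the standard fact on Lipschitz-free spaces (McShane extension of Lipschitz functions), $\mathcal F(N)$ is isometrically the closed subspace $\overline{\spann}\{\delta_p:p\in N\}$ of $\mathcal F(M)$. It contains every $v_{k,j}$, hence every $w_k$, hence $W$. Therefore $u$ is ($\mu$-a.e.) $\mathcal F(N)$-valued.

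Next we check that $N$ is countable. As a closed subset of the complete scattered space $M$, $N$ is complete and scattered, and it is separable, so it is a scattered Polish space. We run the Cantor--Bendixson argument. The derived sets $N^{(\alpha)}$ strictly decrease until they become empty, which happens at a countable ordinal because $N$ is second countable. Each difference $N^{(\alpha)}\setminus N^{(\alpha+1)}$ is a discrete subspace of a separable metric space, hence countable. So $N$ is countable, and Example~\ref{ex:CCG}~a) shows that $\mathcal F(N)$ is CCG.

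By Theorem~\ref{thm:CSRP}, $u$ viewed as an element of $L_1(\mu)\pten\mathcal F(N)$ has an optimal representation $u=\sum_i f_i\otimes y_i$ with $y_i\in\mathcal F(N)$ and $\norm{u}=\sum_i\norm{f_i}\norm{y_i}$. We then transfer this to $L_1(\mu)\pten\mathcal F(M)$. The Bochner norm of $u$ in $L_1(\mu,\mathcal F(N))$ equals its norm in $L_1(\mu,\mathcal F(M))$, because the pointwise norms coincide under the isometric embedding. The norms $\norm{y_i}$ are also unchanged, and the series still converges to $u$ in the larger space. Hence the same representation is optimal in $L_1(\mu)\pten\mathcal F(M)$.

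I expect the only delicate step to be the reduction to the countable set $N$, namely these two facts:
\begin{itemize}
\item[(a)] $\mathcal F(N)\subseteq\mathcal F(M)$ isometrically;
\item[(b)] a separable complete scattered metric space is countable.
\end{itemize}
Both are standard, and I would quote (a) from \cite{Weaver}.
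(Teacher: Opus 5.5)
Your proposal is correct and follows the paper's proof: you reduce to a closed separable subset $N\subseteq M$ that is complete, scattered and hence countable, with $u$ taking values in $\mathcal F(N)\subseteq\mathcal F(M)$ isometrically; you then apply Theorem~\ref{thm:CSRP} via Example~\ref{ex:CCG}~a) and transfer the optimal representation to $L_1(\mu)\pten\mathcal F(M)$. The differences are minor: you prove countability of $N$ by a Cantor--Bendixson argument where the paper cites that second countable scattered spaces are countable, and you explicitly put the base point into $N$.
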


\begin{proof}
    Let $u\in L_1(\mu)\pten \mathcal F(M)=L_1(\mu,\mathcal F(M))$. Since $u$ is strongly measurable, there is a separable subspace $Y\subseteq\mathcal F(M)$ such that $u(\omega)\in Y$,  $\mu-$a.e. A standard argument provides a countable subset $N\subseteq M$ such that $Y\subseteq \mathcal F(\overline{N})$ (take a dense sequence $(y_n)_n\subseteq Y$ and approximate it by finitely supported elements), moreover $\mathcal F(\overline{N})\subseteq \mathcal F(M)$ isometrically thanks to McShane's extension theorem.  Now, since $N$ is countable and $M$ is complete and scattered, we have that $\overline{N}$ is separable, complete and scattered. Furthermore, every  second countable and scattered space is countable \cite[Proposition 8.5.5]{Semadeni}, so $\overline{N}$ is also countable. 
    Therefore, $u$ has an optimal representation in $L_1(\mu)\pten \mathcal F(\overline{N})$ thanks to Theorem~\ref{thm:CSRP} and Example \ref{ex:CCG}. Finally, this representation is also optimal in $L_1(\mu)\pten \mathcal F(M)$, since $L_1(\mu)\pten \mathcal F(\overline{N})$ is (isometrically) a subspace of $L_1(\mu)\pten \mathcal F(M)$.
\end{proof}

Before exhibiting the next corollary, we refer the reader to \cite{StoneSpaces} and \cite{Fremlin} for basic facts on Boolean algebras and the Stone space. Recall that, for a compact Hausdorff space, being totally disconnected is equivalent to being zero-dimensional \cite[Theorem 4.2]{StoneSpaces}. 
 
\begin{coro}\label{cor:C(K)} Let $(\Omega, \Sigma, \mu)$ be a measure space and $K$ be a compact Hausdorff totally disconnected space. Then,
\[\NA_\pi(L_1(\mu)\pten C(K))= L_1(\mu)\pten C(K).\]
\end{coro}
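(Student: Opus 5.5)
We reduce to the metrizable case treated in Example \ref{ex:CCG} b), in the same spirit as the proof of Corollary \ref{cor:F(M)}. Fix $u\in L_1(\mu)\pten C(K)=L_1(\mu,C(K))$. Since $u$ is strongly measurable, there is a separable closed subspace $Y\subseteq C(K)$ with $u(\omega)\in Y$ for $\mu$-a.e. $\omega$. The aim is to find a closed subalgebra $\mathcal A\subseteq C(K)$ such that $Y\subseteq\mathcal A$ and $\mathcal A$ is isometrically isomorphic to $C(L)$ for some compact metrizable totally disconnected space $L$. By Example \ref{ex:CCG} b) this $C(L)$ is CCG. Theorem \ref{thm:CSRP} then gives an optimal representation of $u$ in $L_1(\mu)\pten \mathcal A$. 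Since $\mathcal A$ is an isometric subspace of $C(K)$, the space $L_1(\mu,\mathcal A)$ is an isometric subspace of $L_1(\mu,C(K))$. Hence the same representation is optimal in $L_1(\mu)\pten C(K)$: its $\pi$-norm sum equals $\norm{u}$ computed in either space.

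\textbf{Construction of $\mathcal A$.} Let $(y_n)_n$ be a dense sequence in $Y$. Because $K$ is zero-dimensional, the locally constant functions (finite linear combinations of $\chi_U$ with $U$ clopen) are uniformly dense in $C(K)$. This follows from Stone--Weierstrass, or directly, since clopen sets separate points. So for each $n$ and $k$ we can choose a finite family of clopen sets $\mathcal U_{n,k}$ and a function $h_{n,k}\in\spann\{\chi_U:U\in\mathcal U_{n,k}\}$ with $\norm{y_n-h_{n,k}}_\infty<1/k$. Let $\mathcal B$ be the Boolean algebra of clopen subsets of $K$ generated by the countable family $\bigcup_{n,k}\mathcal U_{n,k}$; it is countable. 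Let $\mathcal A$ be the closed linear span of $\{\chi_U:U\in\mathcal B\}$. Then $\mathcal A$ is a closed unital subalgebra of $C(K)$ that is closed under conjugation (we work over $\R$), and $Y\subseteq\mathcal A$.

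\textbf{Identification of $\mathcal A$ and the main obstacle.} The main point is to identify $\mathcal A$ with $C(L)$ for a metrizable totally disconnected $L$. I would argue this in one of two equivalent ways.

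\emph{Via Gelfand duality.} $\mathcal A$ is a separable commutative unital C$^*$-algebra, so $\mathcal A\cong C(L)$ with $L$ its maximal ideal space. $L$ is metrizable because $\mathcal A$ is separable. $L$ is a quotient of $K$ via the map $t\mapsto(\text{evaluation at }t)|_{\mathcal A}$, and this isomorphism is isometric in the sup-norm.

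\emph{Via Stone duality.} Take $L$ to be the Stone space of $\mathcal B$, and let $\varphi\colon K\to L$ send $t$ to the ultrafilter $\{U\in\mathcal B:t\in U\}$. The composition map $g\mapsto g\circ\varphi$ is then an isometric embedding $C(L)\to C(K)$ whose image is exactly $\mathcal A$.

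Either way, $L$ is totally disconnected: in the Stone description its clopen sets are the elements of $\mathcal B$. Equivalently, $L$ is a continuous image of $K$ in which the functions $\chi_U$, $U\in\mathcal B$, separate points. This verification, that the image space is both metrizable and zero-dimensional, is the step requiring the most care, but it is standard Stone duality. The countability of $\mathcal B$ is exactly what makes $L$ second countable, so that Example \ref{ex:CCG} b) applies.
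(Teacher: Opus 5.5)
Your proposal is correct and follows essentially the same route as the paper. Both arguments approximate a dense sequence of the separable range by locally constant functions, pass to the countable Boolean algebra of clopen sets these generate, identify the closed span of its characteristic functions with $C(L)$ for the metrizable, totally disconnected Stone space $L$, apply Example~\ref{ex:CCG}~b) and Theorem~\ref{thm:CSRP}, and then transfer optimality through the isometric inclusion $L_1(\mu)\pten C(L)\subseteq L_1(\mu)\pten C(K)$. The one fact you use without comment is that a compact Hausdorff totally disconnected space is zero-dimensional; the paper likewise cites this from the literature, and your Gelfand-duality alternative is an equally valid way to identify the subspace.
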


\begin{proof}
Let $u\in L_1(\mu, C(K))$. Since $u$ is strongly measurable, there is a separable subspace $Y\subseteq C(K)$ such that $u(\omega)\in Y$, $\mu-$a.e. 

Let $(f_n)_n$ be a dense sequence in $Y$. Note that $C(K)=\overline{\spann}\{\chi_U: U\text{ is clopen in } K\}$ by Stone-Weierstrass theorem. 
Approximating each $f_n$, we get a countable family $\mathcal A_0$ of clopen sets such that $f_n\in \overline{\spann}\{\chi_U: U\in \mathcal A_0\}$ for each $n$. Now, let $\mathcal A$ be the Boolean algebra generated by $\mathcal A_0$, which is also countable, and $Z=\overline{\spann}\{\chi_U: U\in \mathcal A\}$. Then 
\[Y=\overline{\spann}\{f_n:n\in \mathbb N\}\subseteq  Z.\]  
and so $u\in L_1(\mu, Z)$.  Consider $L$ the Stone space of $\mathcal A$, that is, the set of ultrafilters on $\mathcal A$ endowed by the topology generated by the sets $\hat{A}=\{\mathcal U: A\in \mathcal U\}$, for $A\in \mathcal A$. By Stone's theorem, $L$ is compact Hausdorff and totally disconnected, and $Z=C(L)$ isometrically. Moreover, since $\mathcal A$ is countable, $L$ has a countable basis and so it is second countable. Thus, $L$ is metrizable, so $Z=C(L)$ is CCG by Example~\ref{ex:CCG}. Finally, $u$ has an optimal representation in $L_1(\mu)\pten Z$ by Theorem \ref{thm:CSRP}. Since $L_1(\mu)\pten Z$ is (isometrically) a subspace of $L_1(\mu)\pten C(K)$, that representation is also optimal in   $L_1(\mu)\pten C(K)$. 
\end{proof}

Since $L_\infty(\nu)$ is isometric to the space of continuous functions on the associated Stone space (see e.g. \cite[363A, 363I]{Fremlin}), we get:

\begin{coro}\label{cor:Linfty} For any measures $\mu$ and $\nu$, we have
\[\NA_\pi(L_1(\mu)\pten L_\infty(\nu))= L_1(\mu)\pten L_\infty(\nu).\]
\end{coro}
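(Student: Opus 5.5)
The corollary should follow directly from Corollary \ref{cor:C(K)} once $L_\infty(\nu)$ is realized isometrically as $C(K)$ for a compact Hausdorff totally disconnected $K$. Let $\mathfrak A$ be the measure algebra of $\nu$, i.e.\ $\Sigma_2$ modulo $\nu$-null sets, which is a Boolean algebra. Let $K$ be its Stone space. By \cite[363A, 363I]{Fremlin}, $L_\infty(\nu)$ is isometrically isomorphic to $C(K)$ in the real setting, which is the setting of this section. Stone's theorem guarantees that $K$ is compact Hausdorff and totally disconnected, since the clopen sets $\hat A$ form a basis.

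Some care is needed when $\nu$ is not semi-finite or not localizable. In that case $L_\infty(\nu)$ should be taken in the sense of \cite{Fremlin}, namely as $L^\infty$ of the measure algebra. The cited identification $L^\infty(\mathfrak A)\cong C(Z)$, with $Z$ the Stone space of $\mathfrak A$, holds for an arbitrary Boolean algebra. So the only thing to confirm is that the paper's $L_\infty(\nu)$ coincides with Fremlin's $L^\infty$. This is the one point I expect could be delicate, and I would settle it by that remark. In any case, no completeness of $\mathfrak A$ is needed, because Corollary \ref{cor:C(K)} assumes only that $K$ is totally disconnected.

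Next, let $J\colon L_\infty(\nu)\to C(K)$ be the surjective isometry. Then $I\otimes J$ is an isometric isomorphism from $L_1(\mu)\pten L_\infty(\nu)$ onto $L_1(\mu)\pten C(K)$. It maps $\sum f_i\otimes y_i$ to $\sum f_i\otimes Jy_i$, preserving both $\norm{u}_\pi$ and each $\norm{f_i}\norm{y_i}$. Hence it carries optimal representations to optimal representations, in both directions, so $\NA_\pi$ is preserved. Corollary \ref{cor:C(K)} gives $\NA_\pi(L_1(\mu)\pten C(K))=L_1(\mu)\pten C(K)$, and pulling back through $I\otimes J$ completes the argument.
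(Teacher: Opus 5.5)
Your proposal is correct and follows the paper's own argument: identify $L_\infty(\nu)$ isometrically with $C(K)$, where $K$ is the Stone space of the measure algebra, via \cite[363A, 363I]{Fremlin}, and then apply Corollary \ref{cor:C(K)}. Your check that $I\otimes J$ carries optimal representations to optimal representations, and your remark that Dedekind completeness of the measure algebra is not needed, spell out details the paper leaves implicit.
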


\begin{coro}\label{cor:c0} Let $(\Omega, \Sigma, \mu)$ be a measure space. Then,
\[\NA_\pi(L_1(\mu)\pten c_0(\Gamma))= L_1(\mu)\pten c_0(\Gamma).\]
\end{coro}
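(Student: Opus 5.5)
The strategy mirrors the proofs of Corollaries \ref{cor:F(M)} and \ref{cor:C(K)}. We reduce to a separable subspace of $c_0(\Gamma)$ that is isometric to $c_0$ (or to a finite-dimensional $\ell_\infty^n$). Then we apply Theorem \ref{thm:CSRP} together with Example \ref{ex:CCG}c), and finally transfer the optimal representation back to $L_1(\mu)\pten c_0(\Gamma)$ through the isometric inclusion of projective tensor products.

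\textbf{Step 1.} Fix $u\in L_1(\mu)\pten c_0(\Gamma)=L_1(\mu,c_0(\Gamma))$. Since $u$ is strongly measurable, there is a separable subspace $Y\subseteq c_0(\Gamma)$ with $u(\omega)\in Y$ for $\mu$-a.e.\ $\omega$. Take a dense sequence $(y_n)_n$ in $Y$. Each $y_n$ has countable support, so $\Gamma_0:=\bigcup_n\supp y_n$ is a countable subset of $\Gamma$. Then $Y$ is contained in the subspace $Z:=\{y\in c_0(\Gamma): \supp y\subseteq \Gamma_0\}$, and $Z$ is isometric to $c_0$, or to $\ell_\infty^{n}$ when $\Gamma_0$ is finite. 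Consequently $u\in L_1(\mu,Z)$.

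\textbf{Step 2.} We check that $Z$ is CCG. If $\Gamma_0$ is infinite, this is Example \ref{ex:CCG}c). If $\Gamma_0$ is finite, $\ell_\infty^n=C(\{1,\dots,n\})$ is CCG by Example \ref{ex:CCG}b); alternatively, $B_{\ell_\infty^n}$ is a polytope with finitely many extreme points, so every point of the sphere is a finite convex combination of them. Theorem \ref{thm:CSRP} then provides an optimal representation $u=\sum_i f_i\otimes z_i$ in $L_1(\mu)\pten Z$.

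\textbf{Step 3.} Since $L_1(\mu)\pten Z=L_1(\mu,Z)$ sits isometrically inside $L_1(\mu,c_0(\Gamma))=L_1(\mu)\pten c_0(\Gamma)$ (the Bochner norms agree pointwise), the same representation has $\norm{u}=\sum_i\norm{f_i}\norm{z_i}$ in the larger space, so it is optimal there as well.

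The only point needing care is Step 1, namely that the support-based subspace $Z$ really contains $Y$ and is isometric to $c_0$. This holds because every element of $c_0(\Gamma)$ has countable support and the closure of the dense sequence cannot leave $Z$, as $Z$ is closed. The rest follows directly from the earlier results.
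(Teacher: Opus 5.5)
Your proof is correct and follows essentially the paper's route: you reduce to $c_0(\Gamma_0)$ for a countable $\Gamma_0\subseteq\Gamma$, apply Theorem \ref{thm:CSRP} together with Example \ref{ex:CCG}, and transfer the optimal representation through the isometric inclusion $L_1(\mu)\pten c_0(\Gamma_0)\subseteq L_1(\mu)\pten c_0(\Gamma)$. The differences are minor: the paper finds $\Gamma_0$ from a sequence of simple functions converging to $u$ rather than from essential separable-valuedness, and you treat the finite case $\ell_\infty^n$ explicitly, which the paper covers implicitly under ``countable $\Gamma$''.
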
    

\begin{proof} 
For countable $\Gamma$, it follows directly from Theorem \ref{thm:CSRP} since $c_0$ is CCG. Now, given $u\in L_1(\mu, c_0(\Gamma))$, let $(u_n)$ be a sequence of simple functions converging to $u$ in norm. Since each vector of $c_0(\Gamma)$ is countably supported, there is a countable set $\Gamma_0$ such that $u_n(\omega)\in c_0(\Gamma_0)$,  $\mu-$a.e. Thus $u(\omega)\in c_0(\Gamma_0)$,  $\mu-$a.e., and so $u\in L_1(\mu, c_0(\Gamma_0))$. Finally, the optimal representation of $u$ in $L_1(\mu)\pten c_0(\Gamma_0)$ is also optimal in $L_1(\mu)\pten c_0(\Gamma)$, since $L_1(\mu)\pten c_0(\Gamma_0)$ is (isometrically) a subspace of $L_1(\mu)\pten c_0(\Gamma)$. 
\end{proof}

This motivates the following question. Recall that a Banach space is \textit{polyhedral} if the unit ball of every finite-dimensional subspace is a polytope (e.g. $c_0(\Gamma)$).

\begin{question} Let $(\Omega, \Sigma,\mu)$ be a measure space and $Y$ be a polyhedral space. Is it true that 
\[\NA_\pi(L_1(\mu)\pten Y)=L_1(\mu)\pten Y ?\]
\end{question}

 Note that if $Y$ is polyhedral then $L_1(\mu)\otimes Y \subseteq \NA_\pi (L_1(\mu)\pten Y)$ (that is, finite rank tensors attain their norm). 
    
    Indeed, given $u\in L_1(\mu)\otimes Y$, there is some finite-dimensional subspace $E\subseteq Y$ such that $u\in L_1(\mu)\otimes E$. Since $B_E$ is a polytope and $L_1(\mu)$ is 1-complemented in the bidual, we have that $u\in \NA_\pi (L_1(\mu)\pten E)$, in virtue of \cite[Theorem 4.1]{DGLJRZ} and \cite[Corollary 3.2]{GGR}. Thus, the conclusion follows from the fact that $L_1(\mu)\pten E=L_1(\mu,E)$ is a closed subspace of $L_1(\mu)\pten Y=L_1(\mu,Y)$.

Finally, we highlight the difficulties in extending the techniques developed in this section to the complex-valued setting. If one wishes to consider the case $\K=\Complex$, then the natural analogue of Definition \ref{def:CCG} would be to allow the sequence of scalars $(\lambda_n)_n$ to be any complex sequence satisfying $\sum_{n=1}^\infty |\lambda_n| = 1$, that is, to replace convex series with absolutely convex series. However, the following remark illustrates how difficult it is to find complex Banach spaces satisfying this property.

\begin{rema}\label{rema:CCGcomplex}
    Let $Y=\ell_\infty^2(\Complex)$ (i.e. $Y$ is the 2-dimensional complex Banach space with the norm $\norm{\cdot}_\infty$). We claim that it is impossible to find a sequence $(e_n)_n\subseteq S_Y$ satisfying that for every $y\in S_Y$ there is a sequence $(\lambda_n)_n\subseteq \Complex$ with $y=\sum_{n=1}^\infty \lambda_n e_n$ and $\sum_{n=1}^\infty |\lambda_n|=1$.

    Indeed, suppose that such a sequence $(e_n)_n\subseteq S_Y$ exists. It is clear that we may assume the vectors $e_n$ to be pairwise non-colinear. In particular, for each $y=(y(1), y(2))\in S_Y$ with $|y(1)|=|y(2)|=1$, the associated sequence $(\lambda_n)_n\subseteq\Complex$ must satisfy
    \begin{align*}
        1=|y(1)|&=\left|\sum_{n=1}^\infty \lambda_n e_n(1)\right|\leq \sum_{n=1}^\infty |\lambda_n|=1,
        \\ 1=|y(2)|&=\left|\sum_{n=1}^\infty \lambda_n e_n(2)\right|\leq \sum_{n=1}^\infty |\lambda_n|=1.
    \end{align*}
    The above equalities and the pairwise non-colinearity of the vectors $e_n$ yields the existence of a unique $n\in \N$ such that $\lambda_n\neq 0$, and so $y=\lambda_n e_n$. This is impossible because there is a non-countable number of pairwise non-colinear elements $y$ with such form (e.g. the family $y_\theta=(1,\theta)$ with $|\theta|=1$).
\end{rema}

We conclude by providing a counterexample to \cite[Question 6.3]{DJRRZ}, where it is  asked whether the equality $X\pten Y=\NA_\pi(X\pten Y)$ holds whenever $X$ has property $\alpha$. This can be answered negatively as follows: Take $X$ and $Y$ to be any Banach spaces such that there is a tensor $u\in X\pten Y$ that does not attain the norm. By a result of Schachermayer \cite[Theorem 4.6]{Sch} there is a space $Z$ with property $\alpha$ such that $X$ is $1$-complemented in $Z$. Then $X\pten Y$ is isometrically a subspace of $Z\pten Y$, and $u\notin \NA_\pi(Z\pten Y)$ \cite[Lemma 3.1]{GGR}.

\section*{Acknowledgements} 

The authors would like to thank A. Procházka for several insightful discussions, which were essential in obtaining the results of Section 3. We are also grateful to Ramón J. Aliaga, Miguel Martín, José Rodríguez, Óscar Roldán, and Abraham Rueda Zoca for their useful comments. 

The research of Luis C. García-Lirola was supported by grants PID2021-122126NB-C31 and PID2022-137294NB-I00 funded by MCIN/AEI/\\ 
10.13039/501100011033 and by ``ERDF A way of making Europe''; and by grant E48-23R funded by Diputación General de Aragón (DGA), and by Fundaci\'on S\'eneca: ACyT Regi\'on de Murcia grant 21955/PI/22.

The research of Juan Guerrero-Viu was supported by FPU24/02284 predoctoral grant funded by MCIU; by grant PID2022-137294NB-I00 funded by MCIN/AEI/
10.13039/501100011033 and by ``ERDF A way of making Europe''; by grant E48-23R funded by Diputación General de Aragón (DGA).

The authors acknowledge the use of artificial intelligence–based tools, including ChatGPT (OpenAI), and Gemini (Google), for language editing, stylistic improvements and helpful discussions during the preparation of the manuscript. The authors remain fully responsible for the content of this work.

\end{document}